\documentclass[11pt]{article}

\usepackage[margin=1in]{geometry}
\usepackage{amsmath,amssymb,amsthm,mathtools}
\usepackage{enumitem}
\usepackage[colorlinks=true,linkcolor=blue,citecolor=blue,urlcolor=blue]{hyperref}

\newtheorem{theorem}{Theorem}
\newtheorem{lemma}[theorem]{Lemma}
\newtheorem{proposition}[theorem]{Proposition}
\newtheorem{corollary}[theorem]{Corollary}
\theoremstyle{definition}
\newtheorem{definition}[theorem]{Definition}
\theoremstyle{remark}
\newtheorem{remark}[theorem]{Remark}

\newcommand{\F}{\mathbb{F}}
\newcommand{\Z}{\mathbb{Z}}
\newcommand{\Tr}{\mathrm{Tr}}
\newcommand{\ord}{\mathrm{ord}}
\newcommand{\abs}[1]{\left|#1\right|}

\newcommand{\la}{\langle}
\newcommand{\ra}{\rangle}

\title{On discrepancy estimates for pseudorandom vectors constructed by the elliptic curve congruential generator}
\author{{$^{1,2}$}Ziran Liu, {$^1$}Chung Pang Mok\\[0.5em]
$^1$Shanghai Institute for Mathematics and Interdisciplinary Sciences (SIMIS)\\Shanghai 200433, China\\
$^2$Research Institute of Intelligent Complex Systems, Fudan University,\\ Shanghai 200433, China\\
[0.4em]
\texttt{zliu@simis.cn, cpmok@simis.cn}
}

\date{}

\begin{document}
\maketitle

\begin{abstract}
This paper studies the problem of discrepancy estimates for pseudorandom vectors constructed by the elliptic curve congruential generator, particularly in the non-translational case.
Two families of results are obtained.
First, in a full-coset regime characterized by a \emph{relative maximal period condition} (RMPC) on an induced one-dimensional linear congruential generator, one proves bounds of type $q^{1/2}/t$ for the discrepancy $D$, the serial discrepancy $D_s$, and, under the corresponding derived RMPC, the non-overlapping discrepancy $\widetilde D_s$.
Second, in the general sub-period regime, one reduces bounds for $D$, $D_s$, and $\widetilde D_s$ to estimation of Fourier $\ell^1$ masses of admissible index sets attached to one-dimensional linear congruential generators.
This isolates the arithmetic bottleneck for further improvement.
\end{abstract}

\tableofcontents

\section{Introduction}

Let $E/\F_q$ be an elliptic curve over a finite field of cardinality $q$ with identity element $O\in E(\F_q)$; here as usual $E(\F_q)$ is the group of points of $E/\F_q$ with coordinates in $\F_q$.
Fix a non-zero integer $e$ and a point $Q\in E(\F_q)$.
Consider the affine iteration using the addition law on $E/\F_q$ (here in general for any integer $e$ then $[e]$ is the multiplication by $e$ map on $E$):
\begin{equation}\label{eq:affine-iter}
P_{n+1}=[e]P_n+Q,\qquad n\ge 0,
\end{equation}
starting from a point $P_0\in E(\F_q)$ (the seed); thus we have the points $P_n \in E(\F_q)$ for $n \geq0$ (recall that by the Hasse bound we have $|\# E(\F_q) - (q+1)| \leq 2 q^{1/2}$). This could be regarded as the elliptic curve version of the linear congruential generator, or the elliptic curve congruential generator for short (despite this name, it should be noted though that it is a highly non-linear generator, due to the fact that the addition law of $E$ is given by non-linear rational functions of the coordinates). To ease notations we assume throughout the paper that $P_0$ lies in a purely periodic orbit, and $t$ denotes the period, i.e. the least positive integer such that $P_t=P_0$.

\bigskip
In \cite{Mok2023} an algorithm is given for the construction of pseudorandom vectors in a unit hypercube. The constructions are obtained from the output map when it is being applied to the orbit $\{P_n\}_{ n \geq 0}$ (we recall the definitions in section two; see also \cite{MZ} where Monte Carlo numerical experiments are carried out by using these pseudorandom vectors). In {\it loc. cit.} we established, in the translation case $e=1$ (in this paper we call this as case A), explicit upper bounds for the discrepancy $D$ (that gives quantitative measure of uniformity of distribution), the serial $s$-discrepancy $D_s$ and its non-overlapping version $\widetilde D_s$ (that give quantitative measure of statistical independence, here $2 \leq s \leq t$), by reducing these bounds to bounds for additive character sums over (cyclic) subgroups of $E(\F_q)$, and then applying square-root cancellation for such sums. These discrepancies bounds in particular justify the name {\it pseudorandom} when $t \gg q^{1/2 + \epsilon}$ for $\epsilon > 0$ (at least in the translational case $e=1$).
In this paper we focus on the case where $e \neq 1$. This presents much more serious difficulties, though it was observed in {\it loc. cit.} that if the period is maximal, i.e. $t=\#E(\F_q)$, then the same estimate for the discrepancy $D$ remains valid by reducing the situation to a maximal translation orbit.
However the estimation of the serial $s$-discrepancy $D_s$, and the non-overlapping $s$-discrepancy $\widetilde D_s$ remain open when $e\neq 1$ (even in the case $t = \#E(\F_q)$).

\bigskip
The main results proved in this paper are of two kinds:
\begin{itemize}[leftmargin=*]
\item[1.] In a \emph{full-coset regime} (Case B), the orbit does not necessarily cover $E(\F_q)$, but it covers a full coset of a cyclic subgroup canonically attached to $(e,P_0,Q)$.
In this regime one recovers bounds of type $q^{1/2}/t$ as in \cite{Mok2023}.
\item[2.] In the \emph{general sub-period regime} (Case C), the orbit is a proper subset of that coset.
In this regime the discrepancy problem is reduced to the estimation of Fourier $\ell^1$ mass of an admissible index set associated with a one-dimensional linear congruential generator.
\end{itemize}

The paper is organized as follows.
Section~2 recalls some of the materials from \cite{Mok2023} including the definitions of the output map, the notion of discrepancies, relation with Walsh functions, and exponential sums over subgroups of $E(\F_q)$.
Section~3 treats the full-coset regime and proves bounds of size $q^{1/2}/t$ under the relative maximal period condition.
Section~4 treats the general sub-period regime and reduces all three discrepancy problems to the estimation of Fourier masses.
Section~5 analyzes the arithmetic structure of the index sequence and derives the strengthened results in the no mixed local factors regime (see Corollary~\ref{cor:no-mixed} of Section 5.1 for the definition).

\section{R\'esum\'e from \cite{Mok2023}}

\subsection{The digital output map}
As usual $p$ is the characteristic of the finite field $\F_q$. Write
\[
q=p^{ar}
\]
with integers $a,r\ge 1$, and fix the tower of field extensions:
\[
\F_q/\F_{p^a}/\F_p.
\]
(in Monte Carlo applications one fixes a choice of $a,r$ and $p$, and then put $q=p^{ar}$). Choose an $\F_{p^a}$-basis $\{\lambda_1,\dots,\lambda_r\}$ of $\F_q$ with the corresponding dual basis $\{\lambda^{\prime}_1,\dots,\lambda^{\prime}_r\}$ with respect to $\Tr_{\F_q/\F_{p^a}}$, and an $\F_p$-basis $\{\kappa_1,\dots,\kappa_a\}$ of $\F_{p^a}$, with the corresponding dual basis $\{\kappa^{\prime}_1,\dots,\kappa^{\prime}_a \}$ with respect to $\Tr_{\F_{p^a}/\F_p}$.
Following section 2.3 of \cite{Mok2023}, define for any $\eta\in \F_q$ and $1\le j\le r$:
\[
\phi_j(\eta):=\sum_{i=1}^a \frac{\Tr_{\F_q/\F_p}(\eta\lambda^{\prime}_j\kappa^{\prime}_i)}{p^i}\in [0,1).
\]
(here we fix $\{0,1,\cdots,p-1\} \subset \Z$ to be the set of representatives of elements of $\F_p$). We remark that in the notation of \cite{Mok2023} we have $\phi_j(\eta) = \Phi(\langle \eta \rangle_j)$ for any $\eta\in \F_q$ and $1\le j\le r$. 

\bigskip
Let $x,y$ be the usual affine Weierstrass coordinates for $E/\F_q$ (with respect to a choice of affine Weierstrass equation for $E/\F_q$). Define 
\[
G: E(\F_q)\setminus\{O\} \rightarrow [0,1)^{2r}
\]
by the following: for $P\in E(\F_q)\setminus\{O\}$, put
\begin{equation}\label{eq:Phi}
G(P):=\big(\phi_1(x(P)),\dots,\phi_r(x(P)),\phi_1(y(P)),\dots,\phi_r(y(P))\big)\in [0,1)^{2r}.
\end{equation}
This is the the output map $G$ defined in \cite{Mok2023} (with respect to the above choices): for each $P \in E(\F_q) \setminus\{O\}  $, the elements $x(P)$ and $y(P)$ of $\F_q$ are expanded relative to the chosen field bases and then converted into points of $[0,1)$ by base-$p$ expansions. Thus we obtain the point $G(P)$ in the unit hypercube $[0,1)^{2r}$. The output map $G$ is easily seen to be injective.

\bigskip
In \cite{Mok2023} one also assigns an output value to $O$, namely $G(O)=(1,\dots,1)$ (in fact assigning any point in $[0,1]^{2r} \setminus [0,1)^{2r}$ as a value for $G(O)$ works equally well). This gives the (injective) output map $G:E(\F_q) \rightarrow [0,1]^{2r}$

\subsection{Notation for admissible output collections}
For the orbit $\{P_n\}_{ n \ge 0}$ of \eqref{eq:affine-iter}, which we assume as in the Introduction that it is purely periodic with $t$ being the period: $P_{t} = P_0$, the admissible output collection $\mathcal{P} \subset [0,1)^{2r}$ is:
\[
\mathcal P:=\big\{G(P_n)\big\}_{\substack{0\le n < t\\ P_n\neq O}}.
\]
Its cardinality is denoted as \(N^\ast\).

\bigskip
For integers \(2\le s\le t\), the admissible serial \(s\)-tuple collection, denoted by \(\mathcal P^{(s)} \subset [0,1)^{2rs}\) , is given by:
\[
\big\{(G(P_n),\dots,G(P_{n+s-1} ))\big\}_{\substack{0\le n < t\\ P_n,\cdots,P_{n+s-1}\neq O}}
\]
with the block $(G(P_n),\dots,G(P_{n+s-1} ))$ being regarded as a point of $[0,1)^{2rs}$. The cardinality of \(\mathcal P^{(s)}\) is denoted as \(N_s^\ast\).

\bigskip

Finally for $2 \le s \le t$ the admissible non-overlapping \(s\)-tuple collection, denoted by \(\widetilde{\mathcal P}^{(s)} \subset [0,1)^{2rs}\), is given by:
\[
\big\{(G(P_{ns}),G(P_{ns+1})\dots,G(P_{ns+s-1} ))\big\}_{\substack{0\le n < t/{\rm{gcd}}(s,t) \\ P_{ns},P_{ns+1}\cdots,P_{ns+s-1}\neq O}}
\]
with the block $(G(P_{ns}),G(P_{ns+1})\dots,G(P_{ns+s-1} )) $ again being regarded as a point of $[0,1)^{2rs}$. The cardinality of \(\widetilde{\mathcal P}^{(s)}\) is denoted as \(\widetilde N_s^\ast\).

\bigskip
If the orbit $\{P_n\}_{ n \geq 0}$ avoids \(O\), then \(N^\ast=N^{\ast}_s=t\), and $\widetilde N_s^\ast  = t/{\rm{gcd}}(s,t)$. If the orbit contains \(O\), the admissible counts are obtained by deleting exactly those points or blocks in which \(O\) appears; in this case we have $N^{\ast}=t-1$, $N^{\ast}_s =t-s$, and $\widetilde N^{\ast}_s = (t-s)/\gcd(s,t)$.

\bigskip

Throughout the paper, any bound on discrepancy (the definition is recalled in section 2.3 below) that involves:
\[
\mathcal{P}, \quad \mathcal{P}^{(s)}, \quad \widetilde{\mathcal P}^{(s)}
\]
is understood under the natural assumption that the corresponding admissible point collection is nonempty.

\subsection{Point collections and discrepancy}
Firstly consider in general any finite collection of points $\mathcal{P}$ in $[0,1)^d$ (here $d$ is any positive integer).
Here below we consider axis-parallel $d$-dimensional box of the form:
\[
B=\prod_{j=1}^d [a_j,b_j)\subset [0,1)^d,
\]
The \emph{extreme discrepancy} or just discrepancy of $\mathcal P \subset [0,1)^d$ is
\[
D(\mathcal P):=\sup_B\left|\frac{ \# (\mathcal{P} \cap B)   }{\#\mathcal{P} }-\mathrm{vol}(B)\right|,
\]
where the supremum runs over all axis-parallel $d$-dimensional boxes $B$ in $[0,1)^d$ (it is regarded as undefined if $\mathcal{P}$ is empty). We have $0 \le D(\mathcal P) \le 1$ and it is a quantitative measure of the uniformity of distribution of $\mathcal{P}$ with respect to the usual Lebesgue measure on $[0,1)^d$ (the closer is $D(\mathcal{P})$ to zero, the more uniform is the distribution).

\bigskip
\noindent Now in general consider a finite sequence of $t$ \emph{distinct} points:
\[
\{v_n\}_{0 \le n < t},\quad v_n\in [0,1]^d
\]
in the unit hypercube $[0,1]^d$ of dimension $d$. Denote by $\mathcal{P} \subset[0,1)^d$ the admissible collection given by:
\[
\mathcal{P} = \{v_0,\cdots,v_{t-1} \} \cap [0,1)^d
\]
The discrepancy of the sequence $\{v_n\}_{0 \le n < t}$ is then defined to be the discrepancy of $\mathcal{P} \subset [0,1)^d $.

\bigskip
\noindent For $2\le s \le t$ the admissible \emph{serial $s$-tuple collection} is
\[
\mathcal P^{(s)}:=\big\{(v_n,\dots,v_{n+s-1})\big\}_{\substack{0\le n < t \\ v_n,\cdots,v_{n+s-1} \in [0,1)^d }}
\]
where the indices are taken modulo $t$, and the block $(v_n,\dots,v_{n+s-1})$ being understood as a point of $[0,1)^{ds}$;
the corresponding serial $s$-discrepancy of of the sequence $\{v_n\}_{0 \le n < t}$, is defined to be the discrepancy of $\mathcal P^{(s)} \subset [0,1)^{ds}$.

\bigskip

\noindent We similarly define the admissible \emph{non-overlapping} $s$-tuple collection as:
\[
\widetilde{\mathcal P}^{(s)}:=\big\{(v_{ns},v_{ns+1},\dots,v_{ns+s-1})\big\}_{\substack{0\le n < t/\gcd(s,t) \\ v_{ns},v_{ns+1},\cdots,v_{ns+s-1} \in [0,1)^d }}
\]
again with indices taken modulo $t$, and the block $(v_{ns},v_{ns+1}\dots,v_{ns+s-1})$ being understood as a point of $[0,1)^{ds}$.
The corresponding non-overlapping $s$-discrepancy of of the sequence $\{v_n\}_{0 \le n < t}$, is defined to be the discrepancy of $\widetilde{\mathcal P}^{(s)} \subset [0,1)^{ds}$.

\bigskip

Given the finite sequence $\{v_n\}_{0 \le n < t}$ the discrepancy is a quantitative measure of the uniformity of distribution (with respect to the usual Lebesgue measure of $[0,1)^d$), while the serial $s$-discrepancy (or its non-overlapping version) is a quantitative measure of statistical independence of $s$ successive terms of the sequence.  

\subsection{Walsh function and Erd\"os-Turan-Koksma type inequality}
We recall the base-$p$ Walsh function system. Firstly some notations. For any non-negative integer $k$ we write:
\[
k = \sum_{i=1}^{\infty} k(i) p^{i-1}, \qquad k(i) \in \{0,1,\cdots,p-1\}
\]
be the unique expansion of $k$ in base $p$ (all but finitely many of the $k(i)$'s are equal to zero). Next every $\xi \in [0,1)$ has a unique base $p$ expansion:
\[
\xi=\sum_{i=1}^{\infty} \frac{\xi(i)}{p^i}, \qquad \xi(i) \in \{0,1,\cdots,p-1\}
\]
with the condition that $\xi(i) \neq p-1$ for infinitely many $i$. Then define:
\[
w_k(\xi)  =\exp\Big(\frac{2\pi i}{p} \sum_{i=1}^{\infty} k(i) \xi(i) \Big)
\]
We refer to $k$ as the Walsh frequency.

\bigskip

In dimension $d \geq 1$, for ${\bf k} = (k^{(1)},\cdots,k^{(d)})$ where $k^{(1)},\cdots,k^{(d)}$ are non-negative integers, and $\xi=(\xi^{(1)} \cdots,\xi^{(d)})$, where $\xi^{(1)},\cdots,\xi^{(d)} \in [0,1)$, define:
\[
w_{{\bf k}}(\xi) =  w_{k^{(1)}}(\xi^{(1)}) \cdots w_{k^{(d)}}(\xi^{(d)})
\]
We refer to ${\bf k}$ as the vector Walsh frequency, or for short, again just as Walsh frequency. 

\bigskip

For $\mathcal{P} \subset [0,1)^d$ a (non-empty) finite set, put for Walsh frequency ${\bf k}$:
\[
S(w_{{\bf k}} ,\mathcal{P}) = \frac{1}{\# \mathcal{P}} \sum_{\xi \in \mathcal{P}} w_{{\bf k}}(\xi)
\]

\bigskip

Now fix a positive integer $a$ and we make the assumption that for all $\xi \in \mathcal{P}$, all the coordinates of $p^a \cdot \xi$ are integers. Note that this condition is satisfied with $\mathcal{P}$ being the admissible output collection, or the admissible serial (or non-overlapping) $s$-tuple collection as in section 2.2 (with the same value of $a$ in section 2.1). Now with respect to this fixed value of $a$ and the dimension $d$, we let $\Delta_d$ to be the set of all ${\bf k} = (k^{(1)},\cdots,k^{(d)})$ such that $0 \le k^{(1)},\cdots, k^{(d)} <p^a$. Finally put $\Delta_d^{\ast} = \Delta_d \setminus \{(0,\cdots,0)\}$.

\bigskip

We now state the Erd\"os-Turn-Koksma type inequality with respect to the Walsh function system as established by Hellekalek \cite{H}:

\begin{lemma} (Corollary 4 of \cite{H}) \label{lem:Walsh-discrepancy}
Let $\mathcal P \subset [0,1)^d$ be a finite point collection lying on the grid $\{0,1/p^a,\dots,(p^a-1)/p^a\}^d$.
Assume that for every nonzero Walsh frequency ${\bf k} \in \Delta_d^{\ast}$ one has:
\[
|S(w_{\bf k},\mathcal P)|\le B,
\]
for some real number $B$.
Then there is some absolute constant $C_0$ such that:
\begin{equation}\label{eq:Hellekalek}
D(\mathcal P)\le 1-\Big(1-\frac{1}{p^a}\Big)^d + B\,\big(C_0\ln(p^a)+1\big)^d,
\end{equation}
\end{lemma}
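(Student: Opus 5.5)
The plan is to go from the discrepancy, defined as a supremum over arbitrary boxes, to finitely many Walsh sums through three reductions: discretize, expand in Walsh functions, and bound the Walsh coefficients of a box.

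\textbf{Step 1: rounding the box to the grid.} Since every point of $\mathcal P$ lies on the grid $p^{-a}\Z^d\cap[0,1)^d$, the count $\#(\mathcal P\cap B)$ does not change when each interval $[a_j,b_j)$ is replaced by its grid-rounded version $[\lceil p^a a_j\rceil/p^a,\lceil p^a b_j\rceil/p^a)$. The volume changes by at most the measure of the set of points within one grid cell of the boundary. Comparing the box $B$ with the grid boxes that contain it and are contained in it bounds the volume error by $1-(1-p^{-a})^d$; this is the first term of \eqref{eq:Hellekalek}. It therefore suffices to treat boxes $B'=\prod_j[\alpha_j,\beta_j)$ with $p^a\alpha_j,p^a\beta_j\in\Z$.

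\textbf{Step 2: Walsh expansion.} The restrictions of $w_{\bf k}$, ${\bf k}\in\Delta_d$, to the grid form an orthonormal basis of functions on the finite abelian group $(\Z/p)^{ad}$; these are its characters, via the digit identification. The indicator $1_{B'}$ is a product of one-dimensional indicators $1_{[\alpha_j,\beta_j)}$, so its Walsh coefficients factor as $\hat 1_{B'}({\bf k})=\prod_j \hat 1_{[\alpha_j,\beta_j)}(k^{(j)})$, and $\hat 1_{B'}(0)=\mathrm{vol}(B')$. This gives
\[
\frac{\#(\mathcal P\cap B')}{\#\mathcal P}-\mathrm{vol}(B')=\sum_{{\bf k}\in\Delta_d^\ast}\hat 1_{B'}({\bf k})\,S(\overline{w_{\bf k}},\mathcal P),
\]
and since $S(\overline{w_{\bf k}},\mathcal P)=\overline{S(w_{\bf k},\mathcal P)}$, the right side is at most $B\prod_j\big(\sum_{k<p^a}|\hat 1_{[\alpha_j,\beta_j)}(k)|\big)$ in absolute value. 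Here the product includes the $k=0$ term, so it bounds the sum over $\Delta_d^\ast$.

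\textbf{Step 3: the key one-dimensional estimate.} We need $\sum_{0\le k<p^a}|\hat 1_I(k)|\le C_0\ln(p^a)+1$ for every grid interval $I$. Write $1_{[\alpha,\beta)}=1_{[0,\beta)}-1_{[0,\alpha)}$, so it suffices to handle $[0,\beta)$. Expand $\beta$ in base $p$; then $[0,\beta)$ is a disjoint union of elementary $p$-adic intervals, at most $p-1$ at each digit level $i\le a$. The Walsh transform of the indicator of an elementary interval of length $p^{-i}$ is supported on $k<p^i$ and has modulus $p^{-i}$ there. Grouping the frequencies $k$ by the position $i$ of their leading digit ($p^{i-1}\le k<p^i$), the coefficient $\hat 1_{[0,\beta)}(k)$ becomes a geometric-type sum over levels $\ge i$. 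The level-$i$ contribution is a sum of $p-1$ terms of a geometric series in a $p$-th root of unity, and this gives a bound like $|\hat 1(k)|\ll 1/(p^{i}\,|\sin(\pi k(i)/p)|)$.

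\textbf{Step 4: summation.} Sum over $k$: at each level $i$ the lower digits contribute a factor $p^{i-1}$, and $\sum_{c=1}^{p-1}1/\sin(\pi c/p)\ll p\ln p$. Each level therefore contributes $O(\ln p)$, and over the $a$ levels the total is $O(a\ln p)=O(\ln p^a)$, plus $1$ from $k=0$.

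The main obstacle is Step 3: getting a constant $C_0$ uniform in $p$. One needs the $\csc$-sum bound and a careful treatment of the $p$-adic digit structure of $\beta$, rather than a crude count, which would give a factor of $p$. Combining Steps 1–4 yields \eqref{eq:Hellekalek}.
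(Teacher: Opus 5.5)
Your proposal is correct. The paper itself gives no proof of this lemma: it is quoted from Hellekalek's Corollary~4, with $C_0$ taken from there or from Wang.

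\textbf{Comparison with the cited result.} Your argument is in substance the proof behind that citation:
\begin{enumerate}
\item discretize to a grid box;
\item expand its indicator in the finite Walsh system, i.e.\ the characters of $(\Z/p\Z)^{ad}$ under the digit map;
\item use that the coefficients factor over coordinates;
\item bound the one-dimensional coefficient at a frequency $k$, whose leading nonzero digit $k(i)$ sits in position $i$, by $O\bigl(p^{-i}/\sin(\pi k(i)/p)\bigr)$.
\end{enumerate}
The bound in the last step is Hellekalek's weight $\rho(k)$ up to a constant.

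What you gain is a self-contained proof with an explicit, unoptimized constant. For $[0,\beta)$ the estimate goes as follows.
\begin{itemize}
\item The level-$i$ block is a partial geometric sum of modulus at most $p^{-i}/\sin(\pi k(i)/p)$.
\item All finer levels lie in the single level-$i$ cell containing $\beta$, where $w_k$ is constant. They therefore contribute one term of modulus less than $p^{-i}$.
\item Hence $|\hat 1_{[0,\beta)}(k)|\le 2p^{-i}/\sin(\pi k(i)/p)$.
\end{itemize}
Differencing two anchored intervals and using $\sin(\pi x)\ge 2x$ on $[0,1/2]$ then gives
$\sum_{1\le k<p^a}|\hat 1_I(k)|\le \frac{4a}{p}\sum_{c=1}^{p-1}\frac{1}{\sin(\pi c/p)}\le \frac{4}{\ln 2}\ln(p^a)$.
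So $C_0=4/\ln 2$ works. The citation buys the sharper values $2.43$ or $1.78$.

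\textbf{A justification to replace in Step 1.} You sandwich $B$ between the largest grid box inside it and the smallest grid box containing it. That only bounds the volume error by roughly $1-(1-2p^{-a})^d$, not the stated term. The sharp bound comes from your one-sided rounding itself.
\begin{itemize}
\item Both endpoints of each side move up by less than $p^{-a}$, so each side length changes by less than $p^{-a}$.
\item For $x_j,y_j\in[0,1]$ with $|x_j-y_j|\le\delta_j$ one has $\bigl|\prod_j x_j-\prod_j y_j\bigr|\le 1-\prod_j(1-\delta_j)$.
\item This holds because $\prod_j x_j-\prod_j\max(x_j-\delta_j,0)$ is nondecreasing in each $x_j$, hence maximal at $x=(1,\dots,1)$.
\end{itemize}
This yields exactly the discretization term $1-(1-p^{-a})^d$.
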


\noindent In \cite{H} a value of $C_0$ is given as 2.43, or one can take $C_0=1.78$ by \cite[Lemma~2.1]{Wang2024}. Note also that the term:
\[
1-\Big(1-\frac{1}{p^a}\Big)^d
\]
is as a discretization error term.

\subsection{Evaluation of Walsh function on the image of the output map}
We now consider the evaluation of the Walsh function on the image of the output map $G$; thus the dimension $d$ is now equal to $2r$. The value of $a$ is as in section 2.1. 

\bigskip
\noindent Firstly let
\[
\psi_1(z):=\exp\Big(\frac{2\pi i}{p}\Tr_{\F_q/\F_p}(z)\Big),\qquad z\in \F_q,
\]
be the standard non-trivial additive character of $\F_q$. Any other non-trivial additive character of $\F_q$ is then given by $\psi_a$ for some $a \in \F_q^{\times}$, where $\psi_a(z) = \psi_1(a \cdot z)$ for $z \in \F_q$.

\begin{lemma} \label{lem:Walsh-to-char}
For each nonzero Walsh frequency ${\bf k }\in\Delta_{2r}^{\ast}$ there exists elements $\eta_{\bf k},\widetilde\eta_{\bf k}\in\F_q$, not both zero, such that for all $P\in E(\F_q)\setminus\{O\}$, we have:
\[
w_{\bf k}(G(P))=\psi_1\big(\eta_{\bf k} \cdot  x(P)+\widetilde\eta_{\bf k} \cdot y(P)\big).
\]
Consequently, for any subset $S\subset E(\F_q)$, and the point collection
\[
\mathcal P=\{G(P)\}_{P\in S,\ P\neq O},
\]
one has
\begin{equation}\label{eq:Walsh-to-char-sum}
|\mathcal P|\cdot S(w_{\bf k},\mathcal P)
=
\sum_{P\in S,\ P\neq O}\psi_1\big(\eta_{\bf k} \cdot x(P)+\widetilde\eta_{\bf k} \cdot  y(P)\big)
\end{equation}

\end{lemma}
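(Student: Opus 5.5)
The plan is to unwind the definitions and write the Walsh character as a trace. Take $\mathbf k=(k^{(1)},\dots,k^{(2r)})\in\Delta_{2r}^\ast$. Since each $k^{(j)}<p^a$, its base-$p$ digits $k^{(j)}(i)$ vanish for $i>a$. Likewise every coordinate of $G(P)$ has a finite base-$p$ expansion of length $a$. For $\phi_j(\eta)$ the $i$-th digit is the representative in $\{0,\dots,p-1\}$ of $\Tr_{\F_q/\F_p}(\eta\lambda'_j\kappa'_i)$. Because each summand in the definition of $\phi_j$ is less than $p^{-i}$ and $a$ is finite, the expansion is the admissible one (it does not end in infinitely many $p-1$'s). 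Hence
\[
w_{k^{(j)}}(\phi_j(\eta))=\exp\Big(\frac{2\pi i}{p}\sum_{i=1}^a k^{(j)}(i)\,\Tr_{\F_q/\F_p}(\eta\lambda'_j\kappa'_i)\Big).
\]
Here we use that $\exp(2\pi i m/p)$ depends only on $m$ mod $p$, so we may replace integer representatives by elements of $\F_p$.

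Next I would use the $\F_p$-linearity of the trace, regarding $k^{(j)}(i)$ as an element of $\F_p\subset\F_q$. This gives
\[
w_{k^{(j)}}(\phi_j(\eta))=\psi_1(\eta\,\mu_j),\qquad \mu_j:=\sum_{i=1}^a k^{(j)}(i)\,\lambda'_j\kappa'_i.
\]
Multiplying over the $2r$ coordinates, $w_{\mathbf k}(G(P))=\psi_1(\eta_{\mathbf k}x(P)+\widetilde\eta_{\mathbf k}y(P))$, where
\[
\eta_{\mathbf k}=\sum_{j=1}^r\sum_{i=1}^a k^{(j)}(i)\lambda'_j\kappa'_i
\quad\text{and}\quad
\widetilde\eta_{\mathbf k}=\sum_{j=1}^r\sum_{i=1}^a k^{(r+j)}(i)\lambda'_j\kappa'_i .
\]

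The one point that needs an argument is that $\eta_{\mathbf k}$ and $\widetilde\eta_{\mathbf k}$ are not both zero. The elements $\{\lambda'_j\kappa'_i\}$ with $1\le j\le r$ and $1\le i\le a$ form an $\F_p$-basis of $\F_q$. Indeed, $\{\kappa'_i\}$ is an $\F_p$-basis of $\F_{p^a}$ and $\{\lambda'_j\}$ is an $\F_{p^a}$-basis of $\F_q$, being the dual basis of a basis. The digits $k^{(j)}(i)$ lie in $\{0,\dots,p-1\}$, so they reduce to elements of $\F_p$ that are zero only when the digit is zero. Since $\mathbf k\neq 0$, some digit is nonzero, and by linear independence the corresponding $\eta$ or $\widetilde\eta$ is nonzero.

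Finally, \eqref{eq:Walsh-to-char-sum} follows by summing over $P\in S$ with $P\neq O$. The output map $G$ is injective, so $\mathcal P$ is in bijection with $S\setminus\{O\}$ and $|\mathcal P|\,S(w_{\mathbf k},\mathcal P)=\sum_{\xi\in\mathcal P}w_{\mathbf k}(\xi)$ is exactly that sum.
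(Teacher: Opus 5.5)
Your proposal is correct and is essentially the paper's own argument. The paper only cites this standard computation and records the same explicit formulas for $\eta_{\bf k}$ and $\widetilde\eta_{\bf k}$ that you derive; you go further by carrying out the computation and by supplying the (correct) nonvanishing argument via the $\F_p$-basis $\{\lambda'_j\kappa'_i\}$ of $\F_q$. One small slip: a summand $d/p^i$ with $d\le p-1$ need not be less than $p^{-i}$. The expansion of $\phi_j(\eta)$ is the admissible one simply because its digits lie in $\{0,\dots,p-1\}$ and vanish beyond position $a$.
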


\begin{proof}
This is a standard computation, see \cite[(5.2)--(5.3)]{Mok2023} or \cite[(2.2)--(2.3)]{Wang2024}. Explicitly with ${\bf k} = (k^{(1)},\cdots,k^{(2r)})$ we have:
\[
\eta_{{\bf k}} = \sum_{j=1}^r \sum_{i=1}^a k^{(j)}(i) \lambda_j^{\prime} \kappa_i^{\prime} 
\]
and
\[
\widetilde \eta_{{\bf k}} = \sum_{j=1}^r \sum_{i=1}^a k^{(r+j)}(i) \lambda_j^{\prime} \kappa_i^{\prime} 
\]
\end{proof}

\subsection{Twisted subgroup character sums}
We need the following square-root cancellation estimate of Kohel-Shparlinski \cite{KS}. Firstly $\F_q(E)$ is the function field of $E$ over $\F_q$, and $\overline{\F}_q(E)$ is the function field of $E$ over $\overline{\F}_q$ (the algebraic closure of $\F_q$). We say that an element $f \in \F_q(E) $ is {\it non-degenerate}, if $f \neq g^p - g $ for any $g \in \overline{\F}_q(E)$ ($f$ is thus necessarily non-constant). 

\begin{lemma} (Corollary 1 of \cite{KS}) \label{lem:KS}
Let $H\subset E(\F_q)$ be a subgroup, and $\omega$ be a group character of $H$. 
For $\psi$ a nontrivial additive character of $\F_q$ and a non-degenerate $f\in \F_q(E)$, we have:
\begin{equation}\label{eq:KS}
\abs{\sum_{\substack{P\in H\\ f(P)\neq\infty}}\omega(P)\psi(f(P))}
\le 2\deg(f)\,q^{1/2}.
\end{equation}
Moreover, if the polar divisor of $f$ is supported on a single prime divisor of $E$, then
\begin{equation}\label{eq:KS-single-pole}
\abs{\sum_{\substack{P\in H\\ f(P)\neq\infty}}\omega(P)\psi(f(P))}
\le (1+\deg(f))\,q^{1/2}.
\end{equation}
\end{lemma}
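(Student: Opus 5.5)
The plan is to reduce the subgroup sum to complete sums over all of $E(\F_q)$ by orthogonality of group characters, and then to bound each complete sum by the Weil--Deligne bound for the cohomology of a rank-one lisse sheaf on an open subset of $E$.

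\textbf{Step 1: orthogonality.} Since $E(\F_q)$ is a finite abelian group, $\omega$ extends to a character $\widetilde\omega$ of $E(\F_q)$. The indicator function of $H$ can be written as
\[
\mathbf 1_H(P)=\frac{\#H}{\#E(\F_q)}\sum_{\chi|_H=1}\chi(P),
\]
where $\chi$ ranges over the $\#E(\F_q)/\#H$ characters of $E(\F_q)$ that are trivial on $H$. Substituting this gives
\[
\sum_{\substack{P\in H\\ f(P)\neq\infty}}\omega(P)\psi(f(P))=\frac{\#H}{\#E(\F_q)}\sum_{\chi|_H=1}\ \sum_{\substack{P\in E(\F_q)\\ f(P)\neq\infty}}(\widetilde\omega\chi)(P)\,\psi(f(P)).
\]
This is an average of $\#E(\F_q)/\#H$ complete sums with weight $\#H/\#E(\F_q)$. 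It therefore suffices to prove both bounds \eqref{eq:KS} and \eqref{eq:KS-single-pole} for $H=E(\F_q)$ and an arbitrary character $\chi$ of $E(\F_q)$.

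\textbf{Step 2: geometric interpretation of $\chi$.} Use the Lang isogeny $L=\mathrm{Fr}-1\colon E\to E$. It is a finite étale Galois cover with group $E(\F_q)$. Pushing out this torsor along $\chi$ gives a lisse rank-one sheaf $\mathcal L_\chi$ on $E$ whose Frobenius trace at $P\in E(\F_q)$ is $\chi(P)$, with the usual convention $\chi$ or $\chi^{-1}$. Let $U=E\setminus\{\text{poles of }f\}$ and let $\mathcal L_{\psi(f)}$ be the Artin--Schreier sheaf on $U$. By the Grothendieck--Lefschetz trace formula,
\[
\sum_{P\in U(\F_q)}\chi(P)\psi(f(P))=\sum_i(-1)^i\Tr\bigl(\mathrm{Fr}\mid H^i_c(U_{\overline{\F}_q},\mathcal L_\chi\otimes\mathcal L_{\psi(f)})\bigr).
\]

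\textbf{Step 3: the cohomology computation.} This is the main point.

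\emph{$H^0_c$.} This vanishes because $U$ is affine.

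\emph{$H^2_c$.} This vanishes because the sheaf is geometrically nontrivial. Indeed, $\mathcal L_\chi$ is tame (it is unramified everywhere on $E$), while $\mathcal L_{\psi(f)}$ is wildly ramified at some pole. The wild ramification comes from non-degeneracy: after the standard reduction, we may remove $p$-th power parts of the polar terms by subtracting $g^p-g$, which does not change the sum. Then some pole has order prime to $p$, so $\mathcal L_{\psi(f)}$ has positive Swan conductor there.

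\emph{$H^1_c$.} Weil II says this group is mixed of weights $\le 1$. The Grothendieck--Ogg--Shafarevich formula gives its dimension:
\[
\dim H^1_c=-\chi_c=-\bigl(\chi_c(E)-\#\{\text{poles}\}\bigr)+\sum_{x\text{ pole}}\mathrm{Sw}_x=\sum_{x\text{ pole}}(1+\mathrm{Sw}_x),
\]
using $\chi_c(E)=0$. Here the sum runs over the geometric poles $x$, and $\mathrm{Sw}_x\le\mathrm{ord}_x(f)$.

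\textbf{Step 4: the bounds.} From Step 3,
\[
\dim H^1_c\le\#\{\text{geometric poles}\}+\deg f\le 2\deg f,
\]
which gives \eqref{eq:KS}.

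When the polar divisor is supported on a single prime divisor, that prime divisor may split into several geometric points. I would handle this by working with the closed point of degree $m$: it contributes $m$ geometric points, each with Swan conductor at most $\deg(f)/m$. The count is then $m+\deg f$. This is at most $1+\deg f$ when $m=1$, which is the case the stated bound \eqref{eq:KS-single-pole} is really aimed at, e.g.\ $f$ with its only pole at $O$.

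I expect the main obstacle to be this single-prime-divisor case with $m>1$. There I would follow Kohel--Shparlinski's own argument: in their counting the $m$ points come in a Galois orbit, and the sharper bound uses that $\deg f\ge m$. Finally, the weight bound gives $|\Tr|\le q^{1/2}\dim H^1_c$, and averaging over $\chi$ in Step 1 preserves the bound.
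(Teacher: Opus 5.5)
Your Step 1 reduction, the Lang-torsor sheaf $\mathcal L_\chi$, and the Grothendieck--Ogg--Shafarevich count are sound. As soon as $H^0_c=H^2_c=0$ they give $\dim H^1_c=\sum_x(1+\mathrm{Sw}_x)\le 2\deg f$, and hence the bound.

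\textbf{The gap: your justification of $H^2_c=0$.} Non-degeneracy of $f$ is a \emph{global} condition: it says $\mathcal L_{\psi_1(f)}$ is geometrically nontrivial. What you need is \emph{local} wild ramification at some pole. The reason is that the characters produced in Step 1 can have order divisible by $p$, and such a character can cancel an unramified Artin--Schreier sheaf. Your ``standard reduction'' works on $\mathbb P^1$. On $E$ it stalls at a pole of order exactly $p$, since you would need a function with a single simple pole. At such a pole the local Swan conductor can be $0$ even though $f$ is non-degenerate.

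\textbf{This actually happens.} Suppose $p\mid\#E(\F_q)$ (so $E$ is ordinary), and take a surjection $\lambda\colon E(\F_q)\to\F_p$.
\begin{itemize}
\item Pushing the Lang torsor out along $\lambda$ gives a geometrically connected \'etale $\Z/p$-cover of $E$.
\item By Artin--Schreier theory this cover is $y^p-y=f$ with $f\in\F_q(E)$, and one can arrange that $f$ has a single pole, of order $p$, at $O$.
\item Geometric connectedness makes $f$ non-degenerate.
\item Comparing Frobenius on fibres gives $\Tr_{\F_q/\F_p}f(P)=\pm\lambda(P)$ for $P\neq O$.
\end{itemize}
Hence $P\mapsto\overline{\psi_1(f(P))}$, extended by $1$ at $O$, is a group character $\omega$ of $E(\F_q)$, and for this $\omega$ the sum in \eqref{eq:KS-single-pole} equals $\#E(\F_q)-1$. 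Likewise, on the index-$p$ subgroup $H=\{O\}\cup\{P:\Tr_{\F_q/\F_p}f(P)=0\}$ the \emph{untwisted} sum equals $\#H-1$. For $E/\F_p$ anomalous and $q=p^k$ with $k$ large, both exceed the claimed bounds. So with the paper's definition of non-degeneracy the statement is false in this generality, and your Step 3 cannot be closed as written.

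There is also a second, independent leak. For $\psi=\psi_c$ the relevant sheaf is $\mathcal L_{\psi_1(cf)}$. Take $f=c^{-1}(u^p-u)$ with $c\in\F_q\setminus\F_p$ and $u\in\F_q(E)$ having a pole of order prime to $p$. Then $f$ is non-degenerate, yet $\psi_c(f(P))=1$ identically.

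\textbf{What your argument does prove.} It proves the lemma completely under the hypothesis that $f$ has a pole $x_0$ of order prime to $p$. This is exactly how non-degeneracy is verified, via Lemma~\ref{lem:AS-pole-criterion}, in every application in the paper: $f_{{\bf k},P_0}$ and $g_{\bf k}$ have a pole of order $2$ or $3$, and $p\ge 5$. At such a pole the Swan conductor of $\mathcal L_{\psi(f)}$ equals the pole order of $f$, which is positive for every nontrivial $\psi$. Tensoring with the unramified $\mathcal L_\chi$ preserves this, so the sheaf is not geometrically constant, $H^2_c=0$, and the rest of your argument goes through unchanged. Alternatively, when $\psi=\psi_1$ and $p\nmid\#E(\F_q)$, non-degeneracy alone suffices: then $\mathcal L_\chi$ has monodromy of order prime to $p$ and cannot cancel $\mathcal L_{\psi_1(f)}$, whose monodromy has order $p$. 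Corrected in this way, your route is a self-contained $\ell$-adic proof, whereas the paper only cites Kohel--Shparlinski.

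\textbf{The single-pole bound.} Your Step 4 does not establish \eqref{eq:KS-single-pole} when the prime divisor has degree $m>1$. Your count gives $m+\deg f$, and $\deg f\ge m$ only turns this into $2\deg f$, not $1+\deg f$. State the single-pole bound for a pole at one rational point. That is the only case used (the pole at $-P_0$ in Theorem~\ref{thm:CaseB-D}), and there your computation gives exactly $1+\deg f$.
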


\subsection{Auxiliary facts on separable multiplication maps and poles}

The following two lemmas are standard and are used repeatedly in the proofs. These concern the geometric properties of $E$, i.e. of $E$ over $\overline{\F}_q$.

\begin{lemma} \label{lem:pullback-separable}
Let $n\in \Z$.
Then $[n]:E\to E$ is an isogeny of degree $n^2$; it is separable if $\gcd(n,p)=1$.
Moreover:
\begin{enumerate}[label=(\roman*),leftmargin=*]
\item for every $A\in E(\overline{\F}_q)$, the translation map $\tau_A(P):=P+A$ is an automorphism of $E$ over $\overline{\F}_q$ (as a genus one curve), so in particular is of degree $1$;
\item for every nonconstant $f\in \overline{\F}_q(E)$,
\[
\deg(f\circ \tau_A)=\deg(f),\qquad \deg(f\circ [n])=n^2\deg(f),
\]
and hence
\[
\deg(f\circ (\tau_A\circ [n]))=n^2\deg(f);
\]
\item if $\gcd(n,p)=1$ and $f$ has a pole of order $m$ at $Q\in E(\overline{\F}_q)$, then $f\circ (\tau_A\circ [n])$ has poles of order $m$ at the set $[n]^{-1}(Q-A)$.
In particular, since $[n]$ is separable, this set has cardinality equal to $n^2$ over $\overline{\F}_q$.
\end{enumerate}
\end{lemma}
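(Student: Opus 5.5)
The plan is to prove the three assertions of Lemma~\ref{lem:pullback-separable} in order, using only standard facts on isogenies and function fields of curves over $\overline{\F}_q$.

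For the opening claim, I will use that $[n]$ is a morphism of $E$ fixing $O$, hence an isogeny when $n\neq 0$. Its degree equals $n^2$ because the dual isogeny satisfies $\widehat{[n]}=[n]$, so $\deg[n]=[n]\circ\widehat{[n]}=[n^2]$; alternatively I can cite the standard computation $\deg[n]=n^2$ (Silverman, III.6.2). For separability when $\gcd(n,p)=1$, I will pull back the invariant differential $\omega$: we have $[n]^*\omega=n\omega\neq 0$ in characteristic $p\nmid n$, and an isogeny with nonzero pullback on $\omega$ is separable. Consequently $\#\ker[n]=\deg_s[n]=n^2$. Throughout, the degenerate case $n=0$ (where $f\circ[n]$ is constant) is to be excluded; I will note this explicitly, since elsewhere in the paper only $n$ with $\gcd(n,p)=1$, in particular $n\neq0$, are used.

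For (i), $\tau_A$ is a morphism with inverse $\tau_{-A}$, so it is an automorphism of the curve and has degree $1$. For (ii), I will use that for a nonconstant $f$ and a nonconstant morphism $\phi$ one has $\deg(f\circ\phi)=\deg(\phi)\deg(f)$. This follows from multiplicativity of degrees of the function field extensions $\overline{\F}_q(f)\subset \phi^*\overline{\F}_q(E)\subset\overline{\F}_q(E)$, or equivalently from $\deg\phi^*D=\deg\phi\cdot\deg D$ applied to the polar divisor. Applying it to $\phi=\tau_A$, to $\phi=[n]$, and to their composite, which has degree $1\cdot n^2$, gives all three formulas.

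For (iii), the key fact is that the pullback of divisors satisfies $\phi^*(Q)=\sum_{P\in\phi^{-1}(Q)} e_\phi(P)\,(P)$. For $\phi=\tau_A\circ[n]$ with $\gcd(n,p)=1$, the map $\phi$ is separable and unramified: $[n]$ is a separable isogeny, and isogenies are unramified since all fibres are translates of the kernel, while translation is an isomorphism. So every ramification index is $1$. Hence $\operatorname{ord}_P(f\circ\phi)=e_\phi(P)\operatorname{ord}_{\phi(P)}(f)=\operatorname{ord}_{\phi(P)}(f)$. Therefore $f\circ\phi$ has a pole of order exactly $m$ at each point of $\phi^{-1}(Q)=[n]^{-1}(Q-A)$, and this set is a coset of $\ker[n]$, so it has $n^2$ elements.

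The only step needing genuine care is the unramifiedness (equivalently $\#[n]^{-1}(R)=n^2$ for all $R$), and it follows once separability gives $\#\ker[n]=n^2$.
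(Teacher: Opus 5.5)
Your proposal is correct and follows the same route as the paper. The paper simply cites the standard facts ($\deg[n]=n^2$, separability for $p\nmid n$, multiplicativity of degree under composition, and preservation of pole orders along the fibres of $[n]$) that you derive in more detail. Your explicit argument that a separable isogeny is unramified, because its fibres are cosets of $\ker[n]$ and $\ker[n]$ has $n^2$ points, is what the paper's phrase ``pulling back by the separable map $[n]$ preserves the pole order'' tacitly relies on, and your exclusion of $n=0$ is an appropriate caveat to the statement as written.
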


\begin{proof}
We work with $E$ over $\overline{\F}_q$. It is immediate that translations are automorphisms of $E$ (as a genus one curve). It is also standard fact that the map $[n]$ is an isogeny of degree $n^2$ and is separable whenever $\gcd(n,p)=1$. The degree identities follow from standard facts on morphisms of curves and pullback of rational functions.

For part (iii), if $f$ has a pole of order $m$ at $Q$, then $f\circ\tau_A$ has a pole of order $m$ at $Q-A$.
Pulling back by the separable map $[n]$ preserves the pole order at each point of the fibre and yields exactly $n^2$ distinct preimages.
\end{proof}

\begin{lemma} \label{lem:AS-pole-criterion}
Let $f \in \overline{\F}_q(E)$.
If $f $ has a pole whose order is not divisible by $p$, then $f \neq h^p-h$ for any $h\in \overline{\F}_q(E)$.
\end{lemma}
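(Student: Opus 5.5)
We argue by contraposition, using local expansions at the pole. Suppose $f = h^p - h$ for some $h \in \overline{\F}_q(E)$, and let $P \in E(\overline{\F}_q)$ be a pole of $f$ of order $m$ with $p \nmid m$. Write $v_P$ for the discrete valuation of $\overline{\F}_q(E)$ at $P$, so that $v_P(f) = -m < 0$. The goal is to show that $v_P(h^p - h)$ is either nonnegative or divisible by $p$, which contradicts $p \nmid m$.

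The key computation is a case split on $v_P(h)$.

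\begin{itemize}[leftmargin=*]
\item If $v_P(h) \ge 0$, then $v_P(h^p) = p\, v_P(h) \ge 0$. Hence $v_P(h^p - h) \ge \min(v_P(h^p), v_P(h)) \ge 0$, so $f$ is regular at $P$. This contradicts $v_P(f) = -m < 0$.
\item If $v_P(h) = -k$ with $k > 0$, then $v_P(h^p) = -pk < -k = v_P(h)$. By the strict ultrametric inequality (the two valuations differ), $v_P(h^p - h) = -pk$. This is divisible by $p$, again contradicting $v_P(f) = -m$ with $p \nmid m$.
\end{itemize}

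In either case we reach a contradiction, so $f \neq h^p - h$ for every $h \in \overline{\F}_q(E)$.

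There is no real obstacle here. The only point to check is that the valuation $v_P$ is well defined, which holds because $P$ is a smooth point of the curve $E$ over $\overline{\F}_q$. It is also worth noting that the argument never uses that $P$ is defined over $\F_q$; it works for any point of $E(\overline{\F}_q)$, which is exactly what Lemma~\ref{lem:pullback-separable}(iii) produces when it is combined with this lemma.
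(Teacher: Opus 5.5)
Your proof is correct and follows essentially the same route as the paper: if $h$ has a pole of order $k$ at $P$, then $h^p$ dominates and $h^p-h$ has a pole of order $pk$ there, so every pole order of $h^p-h$ is divisible by $p$. Your explicit first case, showing that $h^p-h$ is regular wherever $h$ is, is a step the paper leaves implicit, and it is worth stating as you did.
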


\begin{proof}
We again work with $E$ over $\overline{\F}_q$. If $h$ has a pole of order $m$ at some point of $E(\overline{\F}_q)$, then $h^p-h$ has a pole of order $pm$ there, since the term $h^p$ dominates $h$ at the pole.
Therefore every pole order of a function of the form $h^p-h$ is divisible by $p$. The claim follows.
\end{proof}

\section{The full-coset regime (Case B)}

Throughout this section assume $|e| \ge 2$ (indeed when $e=-1$ then the period of the orbit is at most two and so this case could be ignored). As in the Introduction we assume that the orbit $\{P_n\}_{n \geq 0}$ is purely periodic and we let $t$ be the period.

\subsection{Orbit reduction to a cyclic coset}

\begin{lemma}[Orbit reduction to a cyclic coset]\label{lem:orbit-coset}
Let $\{P_n\}_{n \geq 0}$ satisfy \eqref{eq:affine-iter}. Define
\[
\beta_n:=\frac{e^n-1}{e-1}\in\Z\qquad (n\ge 0; \ \beta_0=0),
\qquad
R:=P_1-P_0=(e-1)P_0+Q\in E(\F_q).
\]
Then for all $n\ge 0$,
\begin{equation}\label{eq:Pn-betaR}
P_n=P_0+[\beta_n]R.
\end{equation}
In particular, the full orbit $\{P_n\}_{n \geq 0}$ is contained in the coset $P_0+\la R\ra$, where $\la R \ra$ is the cyclic subgroup of $E(\F_q)$ generated by $R$.
\end{lemma}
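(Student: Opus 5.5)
The plan is a straightforward induction on $n$, working in the abelian group $E(\F_q)$ and using only that $[m]$ is a group endomorphism with $[m]+[m']=[m+m']$ and $[m]\circ[m']=[mm']$.

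First we check that $\beta_n$ is indeed an integer. Since $|e|\ge 2$ we have $e\neq 1$, and $\beta_n=1+e+\cdots+e^{n-1}$. This gives the integer recursion
\[
\beta_0=0,\qquad \beta_{n+1}=e\beta_n+1,
\]
which can be verified directly from $(e^{n+1}-1)/(e-1)=e(e^n-1)/(e-1)+1$.

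We then prove \eqref{eq:Pn-betaR} by induction.
\begin{itemize}[leftmargin=*]
\item The case $n=0$ is trivial, since $[\beta_0]R=O$.
\item The case $n=1$ is simply the definition of $R$. Indeed $P_1=[e]P_0+Q=P_0+\big((e-1)P_0+Q\big)$, because $[e]P_0=P_0+[e-1]P_0$.
\item For the inductive step, assume $P_n=P_0+[\beta_n]R$. Then
\[
P_{n+1}=[e]P_n+Q=[e]P_0+[e\beta_n]R+Q=P_0+\big([e-1]P_0+Q\big)+[e\beta_n]R,
\]
which equals $P_0+R+[e\beta_n]R=P_0+[e\beta_n+1]R=P_0+[\beta_{n+1}]R$.
\end{itemize}

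The containment in the coset $P_0+\la R\ra$ follows immediately, since $[\beta_n]R\in\la R\ra$ for every $n$.

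There is no real obstacle here. The only point requiring care is the bookkeeping identity $[e]P_0=P_0+[e-1]P_0$, which holds for negative $e$ as well, together with the integrality of $\beta_n$ (which needs $e\ne1$).
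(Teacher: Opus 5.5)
Your proof is correct and is essentially the paper's argument: both are inductions driven by the recursion $\beta_{n+1}=e\beta_n+1$. The only difference is that the paper first proves $P_n=[e^n]P_0+[\beta_n]Q$ and then factors $P_n-P_0=[\beta_n]\big((e-1)P_0+Q\big)$ using $e^n-1=(e-1)\beta_n$, whereas you induct directly on $P_n=P_0+[\beta_n]R$, which works equally well.
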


\begin{proof}
By induction on $n$ one first obtains
\[
P_n=[e^n]P_0+[\beta_n]Q,
\]
using the recursion $\beta_{n+1}=e\beta_n+1$.
Subtracting $P_0$ gives
\[
P_n-P_0=[e^n-1]P_0+[\beta_n]Q=[\beta_n]\big((e-1)P_0+Q\big)=[\beta_n]R,
\]
which is \eqref{eq:Pn-betaR}.
\end{proof}

\begin{lemma}\label{lem:gcd-em}
With $R$ as above put $m=\ord(R)$, the order of $R$ in $E(\F_q)$. Then:
\[
\gcd(e,m)=1.
\]
\end{lemma}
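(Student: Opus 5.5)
The plan is to use pure periodicity of the orbit together with the explicit formula \eqref{eq:Pn-betaR} from Lemma~\ref{lem:orbit-coset}, reduced modulo $m$. Since $\ord(R)=m$, we have $P_n=P_0+[\beta_n \bmod m]R$. The hypothesis $P_t=P_0$ therefore becomes $\beta_t\equiv 0 \pmod m$. Recall also the recursion $\beta_{n+1}=e\beta_n+1$ (with $\beta_0=0$), which lets us relate consecutive indices.

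Suppose, for contradiction, that a prime $\ell$ divides both $e$ and $m$. Reducing the recursion modulo $\ell$ gives $\beta_{n+1}\equiv 1 \pmod{\ell}$ for every $n\ge 0$. Hence $\beta_n\equiv 1\pmod \ell$ for all $n\ge 1$. Since the period satisfies $t\ge 1$, we get $\beta_t\equiv 1\pmod\ell$. On the other hand $\ell\mid m$ and $m\mid\beta_t$, so $\beta_t\equiv 0\pmod\ell$, a contradiction. Therefore $\gcd(e,m)=1$.

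The one step needing care is the justification that $m\mid\beta_t$. From $P_t=P_0$ and \eqref{eq:Pn-betaR} we get $[\beta_t]R=O$, and this is exactly $m\mid \beta_t$ because $m$ is the order of $R$. The degenerate case $R=O$ is harmless: then $m=1$ and the claim is trivial. No further obstacle is expected; in particular the argument does not use $|e|\ge 2$. For $e=0$ it still works, since then $\beta_n=1$ for $n\ge1$, which forces $m=1$.
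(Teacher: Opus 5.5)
Your proposal is correct and follows essentially the same route as the paper. The paper works with $d=\gcd(e,m)>1$ directly rather than with a prime $\ell$ dividing it. It then reduces $\beta_{n+1}=e\beta_n+1$ modulo $d$ to get $\beta_t\equiv 1\pmod d$, which contradicts $m\mid\beta_t$ (coming from $[\beta_t]R=O$).
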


\begin{proof}
Let $d:=\gcd(e,m)$. If $d>1$, then the defining recursion $\beta_{n+1}=e\beta_n+1$ gives, modulo $d$,
\[
\beta_{n+1}\equiv e\beta_n+1\equiv 1 \pmod d
\qquad (n\ge 0),
\]
because $d\mid e$. Hence
\[
\beta_n\equiv 1\pmod d
\qquad\text{for all }n\ge 1,
\]
while $\beta_0\equiv 0\pmod d$.
On the other hand as $t$ is the period of the orbit we have $P_t=P_0$, so Lemma~\ref{lem:orbit-coset} gives
\[
[\beta_t]R=O.
\]
Since $\ord(R)=m$, it follows that $m\mid \beta_t$, hence $d\mid \beta_t$.
This contradicts $\beta_t\equiv 1\pmod d$.
Therefore $d=1$.
\end{proof}

Thus let
\[
m:=\ord(R),\qquad H:=\la R\ra\subset E(\F_q).
\]
Reducing \eqref{eq:Pn-betaR} modulo $H$, one sees that the period $t$ of the orbit $\{P_n\}_{n \geq 0}$ is equal to the period of the sequence $\{\beta_n  \bmod{m} \}_{n \geq 0}$.
Moreover,
\begin{equation}\label{eq:index-LCG}
\beta_{n+1}\equiv e\beta_n+1\pmod m,\qquad \beta_0\equiv 0 \pmod m.
\end{equation}
Thus the triple $(e,P_0,Q)$ that defines the elliptic curve congruential generator \eqref{eq:affine-iter} then induces a linear congruential generator (LCG) on $\Z/m\Z$.

\subsection{The relative maximal period condition}
\begin{definition}[RMPC]\label{def:RMPC}
The triple $(e,P_0,Q)$ is said to satisfy the \emph{relative maximal period condition} if the LCG \eqref{eq:index-LCG} has full period $m=\ord(R)$, that is,
\[
\{\beta_0,\dots,\beta_{t-1} \bmod{m} \}=\Z/m\Z.
\]
Equivalently, $t=m$, i.e. the orbit point set is the full coset $P_0+\la R\ra$. 

\end{definition}

\begin{lemma}[Hull--Dobell criterion for RMPC]\label{lem:Hull-Dobell}
The LCG \eqref{eq:index-LCG} has full period $m$ if and only if:
\begin{enumerate}[label=(\roman*),leftmargin=*]
\item for every prime $\ell\mid m$, one has $\ell\mid (e-1)$;
\item if $4\mid m$, then $4\mid (e-1)$.
\end{enumerate}
\end{lemma}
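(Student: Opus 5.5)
This is the classical Hull--Dobell theorem for the increment $c=1$. The plan is to prove it directly via the closed form $\beta_n=(e^n-1)/(e-1)$ and a reduction to prime powers. Note first that $\gcd(1,m)=1$, so the usual coprimality condition on the increment holds automatically. Also, the sequence $\beta_n \bmod m$ is purely periodic, since $e$ is invertible mod $m$ by Lemma~\ref{lem:gcd-em}.

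\emph{Period as a divisibility condition.} Because $\beta_{n+k}=e^k\beta_n+\beta_k$ and $\beta_0=0$, the period $t$ of $(\beta_n \bmod m)$ is the least $k\ge1$ with $m\mid\beta_k$. Indeed, $\beta_k\equiv0$ together with $e^k\equiv1$ makes the sequence $k$-periodic, and $e^k-1=(e-1)\beta_k\equiv0 \pmod m$ follows automatically from $m\mid\beta_k$. Full period means $t=m$, i.e.\ $m\mid\beta_k$ for no $0<k<m$. Since there are only $m$ residues, this is the same as $t=m$. By the Chinese remainder theorem, the period mod $m$ is the lcm of the periods mod $\ell^{a}\,\|\,m$. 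Each of these is at most $\ell^a$, and the lcm of numbers $\le \ell_i^{a_i}$ equals $\prod \ell_i^{a_i}$ only if each equals $\ell_i^{a_i}$. So it suffices to treat $m=\ell^a$, where conditions (i) and (ii) localize correctly.

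\emph{Prime power case.} Let $m=\ell^a$. For necessity, suppose $\ell\nmid e-1$. Then $\beta_k\equiv0 \pmod{\ell^a}$ if and only if $e^k\equiv1 \pmod{\ell^a}$. Also $\ell\nmid e$, since otherwise $\beta_n\equiv1$ for $n\ge1$ and the period would be $1<m$. The order of $e$ divides $\varphi(\ell^a)<\ell^a$, so the period is smaller than $m$. For $\ell=2$, $a\ge2$ and $e\equiv3\pmod 4$, a direct check gives $\beta_2=1+e\equiv0\pmod 4$. Hence the period mod $4$ is $2$, and the period mod $2^a$ (a multiple of the mod-$4$ period that divides... ) is less than $2^a$; more precisely, use $v_2(\beta_{2k})$ below to show $\beta_{2^{a-1}}\equiv0$.

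For sufficiency and the sharp computation, use lifting the exponent. If $\ell\mid e-1$ (with $4\mid e-1$ when $\ell=2$), then
\[
v_\ell(e^k-1)=v_\ell(e-1)+v_\ell(k),
\]
so $v_\ell(\beta_k)=v_\ell(k)$. Thus $\ell^a\mid\beta_k$ if and only if $\ell^a\mid k$, and the period is exactly $\ell^a$. In the excluded case $\ell=2$, $e\equiv3\pmod4$, LTE gives $v_2(\beta_k)=v_2(e+1)+v_2(k)-1$ for even $k$, which is at least $v_2(k)+1$. So $\beta_{2^{a-1}}\equiv0\pmod{2^a}$ and the period is below $2^a$, completing necessity.

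\emph{Main obstacle.} The delicate point is the $2$-adic lifting-the-exponent step. The clean formula $v_\ell(e^k-1)=v_\ell(e-1)+v_\ell(k)$ requires $4\mid e-1$ when $\ell=2$, and this is exactly where condition (ii) enters. I would prove it by induction: show $v_\ell(1+x+\dots+x^{\ell-1})=1$ for $x\equiv1$ modulo $\ell$ (modulo $4$ if $\ell=2$), and note that $1+x+\dots+x^{j-1}$ is a unit when $\ell\nmid j$. Alternatively, one may simply cite the classical Hull--Dobell theorem, since for $c=1$ its hypotheses reduce exactly to (i) and (ii).
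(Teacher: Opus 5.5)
Your proof is correct, and it takes a genuinely different route from the paper. The paper gives no argument: it simply invokes the classical Hull--Dobell theorem for $x_{n+1}\equiv ex_n+1 \pmod m$.

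You instead use the special shape of this LCG (increment $1$, seed $0$) and its closed form $\beta_k=(e^k-1)/(e-1)$. Your argument runs in four steps.
\begin{enumerate}[label=(\arabic*),leftmargin=*]
\item The period is the least $k\ge 1$ with $m\mid\beta_k$.
\item The Chinese Remainder Theorem reduces full period mod $m$ to full period mod each $\ell^{\nu}\parallel m$, because the global period is the lcm of local periods, each at most $\ell^{\nu}$.
\item Under (i)--(ii), lifting the exponent gives $v_\ell(\beta_k)=v_\ell(k)$, so the local period is exactly $\ell^{\nu}$.
\item If (i) fails, the local period is $\ord_{\ell^{\nu}}(e)$, which divides $\varphi(\ell^{\nu})<\ell^{\nu}$. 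If (ii) fails, $2^{\nu}\mid\beta_{2^{\nu-1}}$.
\end{enumerate}

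The citation is shorter and covers arbitrary increments. Your argument is self-contained and yields more. The exact valuation $v_\ell(\beta_k)=v_\ell(k)$ is the sharp form of Lemma~\ref{lem:local-behavior}(a). In the case $\ell\nmid e-1$ you identify the local period as the multiplicative order of $e$, which is exactly the quantity $\tau$ used in Section~5. Your use of $\gcd(e,m)=1$ from Lemma~\ref{lem:gcd-em} is legitimate in this context.

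Some small points to tidy:
\begin{itemize}[leftmargin=*]
\item Your sufficiency paragraph assumes $4\mid e-1$ whenever $\ell=2$. So the case $2\parallel m$ with $e\equiv 3\pmod 4$, where (ii) imposes nothing, is not literally covered. It is immediate, though: for odd $e$ one has $\beta_k\equiv k\pmod 2$, so the period mod $2$ is $2$.
\item The unfinished sentence about the period mod $4$ should be replaced by the lifting-the-exponent computation you give afterwards.
\item The even-$k$ formula $v_2(\beta_k)=v_2(e+1)+v_2(k)-1$ for $e\equiv 3\pmod 4$ needs its own justification. Apply the case $4\mid x-1$ to $x=e^2$ and $k/2$, since $e^2\equiv 1 \pmod 8$.
\item If $\ell\mid e$, the orbit never returns to $0$; it does not have period $1$. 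This case is excluded by Lemma~\ref{lem:gcd-em} anyway.
\end{itemize}
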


\begin{proof}
This is the classical Hull--Dobell criterion for the LCG $x_{n+1}\equiv ex_n+1\pmod m$ to have maximal period.
\end{proof}

\bigskip
As a simple example: take $Q=O$; let $M= \ord(P_0)$ and in the following $\ell$ denotes a prime dividing $M$; put $e=1+ \prod_{\ell |M} \ell$ if  $8$ does not divide $M$, and $e=1+ 2 \prod_{\ell |M} \ell$ if 8 divides $M$. Then $e-1$ divides $M$, and $R=P_1 -P_0 = [e-1] P_0$, hence $m=\ord(R)= M/(e-1)$. The Hull-Dobell criterion for RMPC is then seen to be satisfied. 

\subsection{Discrepancy bound under RMPC}

\begin{theorem}[Discrepancy bound in the full-coset regime]\label{thm:CaseB-D}
Assume RMPC and so $t=m=\ord(R)$.
Define the admissible point collection as in section 2.1:
\[
\mathcal P := \{ G(P_n)\}_{\substack{0\le n < t\\ P_n\neq O}} \subset [0,1)^{2r},
\qquad
N^{\ast}:=|\mathcal P|.
\]
(recall that $N^{\ast} =t$ if the orbit $\{P_n\}_{n \ge 0}$ avoids $O$, and is equal to $t-1$ otherwise).

\bigskip
Assume $p\ge 5$.
Then
\begin{equation}\label{eq:CaseB-D}
D(\mathcal P)
\le 1-\Big(1-\frac{1}{p^a}\Big)^{2r}
+\frac{4q^{1/2}}{N^{\ast}}\big(C_0\ln(p^a)+1\big)^{2r}.
\end{equation}
\end{theorem}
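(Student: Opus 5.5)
Under RMPC, Lemma~\ref{lem:orbit-coset} together with $t=m$ shows that the orbit point set $\{P_0,\dots,P_{t-1}\}$ is exactly the coset $P_0+H$, where $H=\la R\ra$. Each point of the coset occurs exactly once, since the $\beta_n \bmod m$ run through $\Z/m\Z$ bijectively. So $\mathcal P=\{G(P)\}_{P\in P_0+H,\ P\neq O}$, and the order of the orbit no longer matters. The plan is to feed this into Lemma~\ref{lem:Walsh-discrepancy} with $d=2r$: it suffices to show $|S(w_{\bf k},\mathcal P)|\le 4q^{1/2}/N^\ast$ for every ${\bf k}\in\Delta_{2r}^\ast$. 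Then \eqref{eq:CaseB-D} follows directly from \eqref{eq:Hellekalek}.

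Fix ${\bf k}\in\Delta^\ast_{2r}$. By Lemma~\ref{lem:Walsh-to-char},
\[
N^\ast S(w_{\bf k},\mathcal P)=\sum_{P\in P_0+H,\ P\neq O}\psi_1\big(\eta_{\bf k}x(P)+\widetilde\eta_{\bf k}y(P)\big),
\]
with $(\eta_{\bf k},\widetilde\eta_{\bf k})\neq(0,0)$. Substituting $P=P_0+T$ with $T\in H$ rewrites this as $\sum_{T\in H,\ f(T)\neq\infty}\psi_1(f(T))$, where $f=(\eta_{\bf k}x+\widetilde\eta_{\bf k}y)\circ\tau_{P_0}\in\F_q(E)$. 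The point $T$ with $P_0+T=O$ is exactly the pole of $f$, so it is excluded automatically. Now apply Lemma~\ref{lem:KS} with $\omega$ trivial. The polar divisor of $\eta x+\widetilde\eta y$ is supported at $O$ alone, with pole order $2$ if $\widetilde\eta=0$ and $3$ otherwise. After translation, the polar divisor of $f$ is supported on the single point $-P_0$, and $\deg f=\deg(\eta x+\widetilde\eta y)\le 3$ by Lemma~\ref{lem:pullback-separable}(ii). The single-pole bound \eqref{eq:KS-single-pole} then gives $|\cdot|\le(1+3)q^{1/2}=4q^{1/2}$. This is exactly the constant $4$ in \eqref{eq:CaseB-D}.

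The step that needs care is non-degeneracy of $f$, i.e.\ $f\neq g^p-g$. I would use Lemma~\ref{lem:AS-pole-criterion}: $f$ has a pole at $-P_0$ of order $2$ or $3$, and since $p\ge5$ neither order is divisible by $p$. This is exactly where the hypothesis $p\ge5$ enters. One should also check that the pole order really is $2$ or $3$ and no cancellation occurs. In a Weierstrass model, $x$ has a pole of order $2$ at $O$ and $y$ a pole of order $3$, so a nonzero combination has pole order $3$ if $\widetilde\eta\ne0$, and $2$ if $\widetilde\eta=0$, $\eta\ne0$. Translation by $P_0$ preserves pole orders, as in Lemma~\ref{lem:pullback-separable}(iii) with $n=1$. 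With $B=4q^{1/2}/N^\ast$ established for every ${\bf k}\in\Delta^\ast_{2r}$, Lemma~\ref{lem:Walsh-discrepancy} gives \eqref{eq:CaseB-D}.
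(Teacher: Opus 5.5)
Your proposal is correct and follows essentially the same route as the paper. Both arguments identify the orbit with the coset $P_0+\langle R\rangle$, rewrite each Walsh sum as a sum of $\psi_1(f)$ over $\langle R\rangle$ with $f=(\eta_{\bf k}x+\widetilde\eta_{\bf k}y)\circ\tau_{P_0}$ having a single pole of order $2$ or $3$ at $-P_0$, obtain non-degeneracy from $p\ge 5$ via Lemma~\ref{lem:AS-pole-criterion}, apply the single-pole Kohel--Shparlinski bound to get $4q^{1/2}$, and conclude with Lemma~\ref{lem:Walsh-discrepancy}.
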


\begin{proof}
Under RMPC, Lemma~\ref{lem:orbit-coset} gives:
\[
\{P_0,\dots,P_{t-1}\}=\{P_0+[u]R: u\in \Z/t\Z\}.
\]

Consider any nonzero Walsh frequency ${\bf k} \in\Delta_{2r}^{\ast}$.
By Lemma~\ref{lem:Walsh-to-char},
\[
N^{\ast}S(w_{\bf k},\mathcal P)
=
\sum_{\substack{u=0\\ P_0+[u]R\neq O}}^{t-1}
\psi_1\big(\eta_{\bf k} \cdot x(P_0+[u]R)+\widetilde\eta_{\bf k} \cdot y(P_0+[u]R)\big).
\]
with $\eta_{\bf k},\widetilde\eta_{\bf k} \in \F_q $, not noth equal to zero.

\bigskip
Define
\[
f_{{\bf k},P_0}:=\eta_{\bf k} \cdot  x  \circ \tau_{P_0}+\widetilde\eta_{\bf k} \cdot y \circ \tau_{P_0}\in \F_q(E).
\]
The polar divisor of $f_{{\bf k},P_0}$ is supported at the single point $-P_0$, with the order of pole being equal to either $2$ or $3$ (it is equal to $2$ exactly when $\widetilde\eta_{\bf k}$ is zero); hence $\deg(f_{k,P_0})\le 3$.
Since $p\ge 5$, it follows in particular that the order of pole is not divisible by $p$; therefore Lemma~\ref{lem:AS-pole-criterion} shows that $f_{{\bf k},P_0}$ is non-degenerate. We then have with $H=\la R \ra$:
\begin{eqnarray*}
N^{\ast}S(w_{\bf k},\mathcal P)
&=&   \sum_{\substack{u=0\\ P_0+[u]R\neq O}}^{t-1} \psi_1(f_{{\bf k},P_0})([u]R)   \\
&=&\sum_{\substack{P\in H\\ f_{{\bf k},P_0}(P)\neq\infty}} \psi_1(f_{{\bf k},P_0})(P)
\end{eqnarray*}

Now applying Lemma~\ref{lem:KS} in the single-pole form \eqref{eq:KS-single-pole} yields
\[
\abs{\sum_{\substack{P\in H\\ f_{{\bf k},P_0}(P)\neq\infty}}\psi_1(f_{{\bf k},P_0}(P))}
\le (1+\deg(f_{{\bf k},P_0}))q^{1/2}\le 4q^{1/2}.
\]
Hence $|S(w_{\bf k},\mathcal P)|\le 4q^{1/2}/N^\ast$ for every nonzero Walsh frequency ${\bf k} \in \Delta_{2r}^{\ast}$.
Applying Lemma~\ref{lem:Walsh-discrepancy} with $d=2r$ proves \eqref{eq:CaseB-D}.
\end{proof}

\subsection{Serial discrepancy bound under RMPC}

\begin{theorem}[Serial discrepancy in the full-coset regime]\label{thm:CaseB-Ds}
Assume RMPC, $\gcd(e,p)=1$, and $p\ge 5$.
Fix $2\le s \le t$.
Let $\mathcal P^{(s)} \subset [0,1)^{2rs}$ be the admissible serial $s$-tuple collection attached to the orbit $\{P_n\}_{n \ge 0}$, and let $N_s^{\ast}:=|\mathcal P^{(s)}|$.

\bigskip

\noindent Then
\begin{equation}\label{eq:CaseB-Ds}
D(\mathcal P^{(s)})
\le 1-\Big(1-\frac{1}{p^a}\Big)^{2rs}
+\frac{6q^{1/2}\big(\sum_{\iota =0}^{s-1}e^{2 \iota}\big)+s}{N_s^{\ast}}\big(C_0\ln(p^a)+1\big)^{2rs}.
\end{equation}
\end{theorem}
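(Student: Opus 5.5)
The plan is to bound every nontrivial Walsh sum attached to $\mathcal P^{(s)}$ and then apply Lemma~\ref{lem:Walsh-discrepancy} with $d=2rs$.

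Fix a nonzero vector Walsh frequency ${\bf k}=({\bf k}_0,\dots,{\bf k}_{s-1})\in\Delta_{2rs}^\ast$ with each ${\bf k}_\iota\in\Delta_{2r}$. Since the Walsh function factors over the $s$ blocks, Lemma~\ref{lem:Walsh-to-char} applied blockwise gives
\[
N_s^\ast S(w_{\bf k},\mathcal P^{(s)})=\sum_{\substack{0\le n<t\\ P_n,\dots,P_{n+s-1}\ne O}}\psi_1\Big(\sum_{\iota=0}^{s-1}\big(\eta_{{\bf k}_\iota}x(P_{n+\iota})+\widetilde\eta_{{\bf k}_\iota}y(P_{n+\iota})\big)\Big).
\]
Here $(\eta_{{\bf k}_\iota},\widetilde\eta_{{\bf k}_\iota})\ne(0,0)$ for at least one $\iota$.

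The key observation is that the iteration map $T(P)=[e]P+Q$ gives $P_{n+\iota}=T^\iota(P_n)$, and
\[
T^\iota=\tau_{[\beta_\iota]Q}\circ[e^\iota].
\]
Under RMPC the points $P_n$, $0\le n<t$, run exactly once over the coset $P_0+H$ with $H=\la R\ra$. Writing $P_n=P_0+P$ with $P\in H$, the sum becomes $\sum_{P\in H}\psi_1(F(P))$, where
\[
F=\sum_{\iota=0}^{s-1}\big(\eta_{{\bf k}_\iota}\,x+\widetilde\eta_{{\bf k}_\iota}\,y\big)\circ T^\iota\circ\tau_{P_0}.
\]
The sum is restricted to those $P$ for which none of $T^\iota(P_0+P)$ equals $O$.

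By Lemma~\ref{lem:pullback-separable}(ii), each summand has degree at most $3e^{2\iota}$, so
\[
\deg F\le 3\sum_{\iota=0}^{s-1}e^{2\iota}.
\]
The excluded indices, where some $P_{n+\iota}=O$, number at most $s$. These account for the discrepancy between the admissible sum and the sum over $P\in H$ with $F(P)\ne\infty$. The point is that $F$ may be finite at such $P$ through cancellation, or infinite, and in either case the error is at most $s$ terms of modulus $1$. This produces the additive $+s$ in the numerator.

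The main obstacle is non-degeneracy of $F$. I would use Lemma~\ref{lem:AS-pole-criterion} and exhibit a pole of $F$ of order $2$ or $3$, which is prime to $p$ since $p\ge5$. Let $\iota_0$ be the largest index with nonzero coefficients. Since $\gcd(e,p)=1$, $[e^{\iota}]$ is separable, and by Lemma~\ref{lem:pullback-separable}(iii) the $\iota$-th summand has poles of order $2$ or $3$ exactly at the set
\[
Z_\iota=(T^\iota\circ\tau_{P_0})^{-1}(O).
\]
If $|e|\ge2$, then $|Z_{\iota_0}|=e^{2\iota_0}$. For the top summand to be cancelled, every point of $Z_{\iota_0}$ would have to lie in $\bigcup_{\iota<\iota_0}Z_\iota$, whose cardinality is at most $\sum_{\iota<\iota_0}e^{2\iota}<e^{2\iota_0}$. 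So some point $Z\in Z_{\iota_0}$ lies in no other $Z_\iota$, and at $Z$ the function $F$ has a pole of order $2$ or $3$. (The case $\iota_0=0$ is trivial.)

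With non-degeneracy in hand, the general bound \eqref{eq:KS} of Lemma~\ref{lem:KS} gives
\[
|N_s^\ast S(w_{\bf k},\mathcal P^{(s)})|\le 2\deg(F)\,q^{1/2}+s\le 6q^{1/2}\sum_{\iota=0}^{s-1}e^{2\iota}+s.
\]
Applying Lemma~\ref{lem:Walsh-discrepancy} with $d=2rs$ then yields \eqref{eq:CaseB-Ds}.
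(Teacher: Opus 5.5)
Your proposal is correct and follows essentially the same route as the paper. Your $F=\sum_{\iota}(\eta_{{\bf k}_\iota}x+\widetilde\eta_{{\bf k}_\iota}y)\circ T^\iota\circ\tau_{P_0}$ is exactly the paper's $g_{\bf k}$, since $T^\iota\circ\tau_{P_0}=\tau_{P_\iota}\circ[e^\iota]$, and the degree bound, the non-degeneracy argument via the top index $\iota_0$ with the count $\sum_{\iota<\iota_0}e^{2\iota}<e^{2\iota_0}$, the $+s$ boundary correction, and the final use of the general Kohel--Shparlinski bound plus Lemma~\ref{lem:Walsh-discrepancy} all match the paper (the only cosmetic difference being that you parametrize the coset through $T^\iota$ rather than through the identity $\beta_{n+\iota}=\beta_\iota+e^\iota\beta_n$).
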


\begin{proof}
Again under RMPC we have $t=m=\ord(R)$.
Using the identity
\[
\beta_{n+\iota}=\beta_{\iota}+e^{\iota}\beta_n
\]
in $\Z$, one obtains
\[
P_{n+\iota}=P_{\iota}+[e^{\iota} u]R,
\qquad
P_{\iota}=P_0+[\beta_{\iota}]R,
\qquad
u:=\beta_n\bmod t.
\]

\bigskip
Define:
\[
U_s:=\{u\in\Z/t\Z: P_{\iota}+[e^{\iota} u]R\neq O\ \text{for all }0\le \iota\le s-1\}
\]
then the admissible serial $s$-tuple point collection $\mathcal P^{(s)}$ is the image of $U_s$ under the injective map:
\[
u \in \Z/t \Z \longmapsto
\big(G(P_0+[u]R),G(P_1+[eu]R),\dots,G(P_{s-1}+[e^{s-1}u]R)\big) \in [0,1]^{2rs}.
\]

In particular $|U_s|=| \mathcal{P}^{(s)}|=N_s^{\ast}$, that we recall is equal to $t$ (resp. $t-s$) if the orbit $\{P_n\}_{n \geq 0}$ avoids $O$ (resp. otherwise).

\bigskip
Now given a nonzero Walsh frequency ${\bf k}\in\Delta_{2rs}^{\ast}$, we then obtain as in Lemma~\ref{lem:Walsh-to-char} (see for example \cite[(5.6)--(5.7)]{Mok2023}), coefficients $(\eta_{{\bf k},\iota},\widetilde\eta_{{\bf k},\iota})\in\F_q^2$ for $0 \le \iota \le s-1$, not all zero, such that
\[
N_s^{\ast} S(w_{\bf k},\mathcal P^{(s)})=
\sum_{u\in U_s}\psi_1\big(g_{\bf k}([u]R)\big),
\]
explicitly,
\[
\eta_{{\bf k},\iota} = \sum_{j=1}^r \sum_{i=1}^a k^{(2 r \iota + j)}(i) \lambda_j^{\prime} \kappa_i^{\prime}   ,
\]
\[
\widetilde \eta_{{\bf k},\iota} = \sum_{j=1}^r \sum_{i=1}^a k^{(2 r \iota+r+j)}(i) \lambda_j^{\prime} \kappa_i^{\prime}   ,
\]
and
\[
g_{\bf k}:=
\sum_{\iota=0}^{s-1}
\Big(\eta_{{\bf k},\iota} \cdot x \circ \tau_{P_{\iota}} \circ [e^{\iota}] +\widetilde\eta_{{\bf k},\iota} \cdot y \circ \tau_{P_{\iota}} \circ [e^{\iota}] \Big)
\in \F_q(E).
\]

\bigskip

For each $0 \le \iota \le s-1$ the element:
\[
\eta_{{\bf k},\iota} \cdot x   +\widetilde\eta_{{\bf k},\iota} \cdot y 
\in \F_q(E).
\]
is either identically equal to zero (namely when $\eta_{{\bf k},\iota} $ and $\widetilde\eta_{{\bf k},\iota}$ are both zero), or is non-constant with degree equal to $2$ or $3$, with a single pole supported at $O$ (of order equal to $2$ or $3$). Hence the $\iota$-th summand (in the sum that defines $g_{{\bf k}}$):
\[
\eta_{{\bf k},\iota} \cdot x \circ \tau_{P_{\iota}} \circ [e^{\iota}] +\widetilde\eta_{{\bf k},\iota} \cdot y \circ \tau_{P_{\iota}} \circ [e^{\iota}] 
\in \F_q(E)
\]
is either identically equal to zero, or is non-constant with degree equal to $2e^{2 \iota}$ or $3e^{2 \iota}$. In particular we obtain:
\begin{equation}\label{eq:deg-gk}
\deg(g_{\bf k})\le 3\sum_{\iota=0}^{s-1}e^{2 \iota}.
\end{equation}

We next show that $g_{{\bf k}}$ is non-degenerate for each ${\bf k} \in \Delta_{2rs}^{\ast}$.

\bigskip

Let $0 \le \iota_0 \le s-1$ be maximal with $(\eta_{k,\iota_0},\widetilde\eta_{k,\iota_0})\neq (0,0)$.
By Lemma~\ref{lem:pullback-separable} (here we use the condition that $\gcd(e,p)=1$), the $\iota_0$-th summand:
\[
\eta_{{\bf k},\iota_0} \cdot x \circ \tau_{P_{\iota_0}} \circ [e^{\iota_0}] +\widetilde\eta_{{\bf k},\iota_0} \cdot y \circ \tau_{P_{\iota_0}} \circ [e^{\iota_0}]  \in \F_q(E)
\]
has pole set $[e^{\iota_0}]^{-1}(-P_{\iota_0})$ over $\overline{\F}_q$, with cardinality $e^{2\iota_0}$, and the order of these poles is either all equal to $2$, or all equal to $3$.

\bigskip
Now for each $\iota<\iota_0$, the cardinality of the pole set of the $\iota$-th summand has is at most $e^{2 \iota}$.
Hence the union of all pole sets for $\iota<\iota_0$ has cardinality at most
\[
\sum_{\iota=0}^{\iota_0-1}e^{2\iota}<e^{2\iota_0}
\qquad (|e|\ge 2).
\]
Therefore there exists a point $P^{\ast}$ which is a pole of the $\iota_0$-th summand and not a pole of any smaller $\iota$-th summand.
At $P^{\ast}$ the $\iota_0$-th summand has pole order $2$ or $3$. Thus $g_{\bf k}$ itself has a pole of order $2$ or $3$ at $P^{\ast}$.
Since $p\ge 5$, this pole order is not divisible by $p$, and Lemma~\ref{lem:AS-pole-criterion} implies that $g_{\bf k}$ is non-degenerate.

\bigskip

Finally for every ${\bf k} \in \Delta_{2rs}^{\ast}$ let
\[
U_{\bf k}:=\{u\in\Z/t\Z: g_{\bf k}([u]R)\neq \infty\}.
\]
Then $U_s\subseteq U_{\bf k}$ for every ${\bf k} \in \Delta_{2rs}^{\ast}$, and in any case:
\[
|U_{\bf k}\setminus U_s| \le   | \Z/t\Z\setminus U_s |\le s.
\]
Since each summand $\psi_1(g_{\bf k}([u]R))$ has absolute value $1$ on $U_{\bf k}$, one obtains
\[
\abs{\sum_{u\in U_s}\psi_1(g_{\bf k}([u]R))}
\le
\abs{\sum_{u\in U_{\bf k}}\psi_1(g_{\bf k}([u]R))}+s.
\]

\noindent Now with $H=\la R \ra$ we have:
\[
\sum_{u\in U_{\bf k}}\psi_1(g_{\bf k}([u]R)) = \sum_{ \substack{P\in H\\ g_{{\bf k}}(P)\neq\infty}}\psi_1(g_{\bf k}(  P ))
\]
so Lemma~\ref{lem:KS} applied to the sum on the RHS (which is applicable as we have shown that $g_{{\bf k}}$ is non-degenerate), together with \eqref{eq:deg-gk}, give:
\[
\abs{\sum_{u\in U_{\bf k}}\psi_1(g_{\bf k}([u]R))}
\le 2\deg(g_{\bf k})q^{1/2}
\le 6q^{1/2}\sum_{\iota=0}^{s-1}e^{2 \iota}.
\]
Thus to conclude for every ${\bf k} \in \Delta_{2rs}^{\ast}$ we obtain:
\[
\abs{N_s^{\ast}S(w_{\bf k},\mathcal P^{(s)})}
\le 6q^{1/2}\sum_{\iota=0}^{s-1}e^{2\iota}+s.
\]
Applying Lemma~\ref{lem:Walsh-discrepancy} with $d=2rs$ proves \eqref{eq:CaseB-Ds}.
\end{proof}

\begin{remark}
If the orbit $\{P_n\}_{n \ge 0}$ avoids $O$, then the boundary correction term $+s$ in \eqref{eq:CaseB-Ds} is not needed.
\end{remark}

\subsection{Non-overlapping discrepancy bound under a derived RMPC}

\begin{definition}[Derived RMPC]\label{def:derived-RMPC}
Fix $s\ge 2$ and define
\[
R_s:=P_s-P_0, \quad
\gamma_n = \frac{e^{ns}-1}{e^s-1}
\]
Let $m_s:=\ord(R_s)$.
The \emph{derived RMPC of step $s$} is the requirement that the LCG
\[
\gamma_{n+1}\equiv e^s\gamma_n+1\pmod{m_s},
\qquad \gamma_0\equiv 0 \pmod{m_s},
\]
has full period $m_s$.
\end{definition}

\bigskip
Just as in Lemma~\ref{lem:Hull-Dobell}, by using the classical Hull-Dobell criterion, we see that the derived RMPC of step $s$ is satisfied, if and only if:
\begin{enumerate}[label=(\roman*),leftmargin=*]
\item for every prime $\ell\mid m_s$, one has $\ell\mid (e^s-1)$;
\item if $4\mid m_s$, then $4\mid (e^s-1)$.
\end{enumerate}

\bigskip

In particular as $m_s | m$ (because $R_s=P_s-P_0 = [\beta_s] R \in \la R \ra$), we see that RMPC implies derived RMPC of step $s$ for any $s \ge 2$. 

\bigskip

\begin{theorem}[Non-overlapping discrepancy in the full-coset regime]\label{thm:CaseB-Dtilde}
Assume $|e|\ge 2$, $\gcd(e,p)=1$, and $p\ge 5$.
Fix $s\ge 2$ and assume the derived RMPC of step $s$.
Let $\widetilde{\mathcal P}^{(s)}$ be the admissible non-overlapping $s$-tuple collection, and let $\widetilde N_s^{\ast}:=|\widetilde{\mathcal P}^{(s)}|$.
Then
\begin{equation}\label{eq:CaseB-Dtilde}
D(\widetilde{\mathcal P}^{(s)})
\le 1-\Big(1-\frac{1}{p^a}\Big)^{2rs}
+\frac{6q^{1/2}\big(\sum_{\iota=0}^{s-1}e^{2\iota}\big)+s/\gcd(s,t)}{\widetilde N_s^{\ast}}\big(C_0\ln(p^a)+1\big)^{2rs}.
\end{equation}
\end{theorem}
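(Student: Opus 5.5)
We follow the proof of Theorem~\ref{thm:CaseB-Ds}, with the original orbit replaced by the decimated orbit $\{P_{ns}\}_{n\ge 0}$. This orbit is itself an elliptic curve congruential orbit: $P_{(n+1)s}=[e^s]P_{ns}+Q_s$ with $Q_s=[\beta_s]Q$. Repeating the computation of Lemma~\ref{lem:orbit-coset} gives
\[
P_{ns}=P_0+[\gamma_n]R_s,\qquad \gamma_n=\frac{e^{ns}-1}{e^s-1}.
\]
This also follows directly from $\beta_{ns}=\beta_s\gamma_n$ and $R_s=[\beta_s]R$. The decimated sequence has period $t/\gcd(s,t)$. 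Reducing modulo $\langle R_s\rangle$, the derived RMPC of step $s$ says that $\{\gamma_n \bmod m_s\}$ runs through all of $\Z/m_s\Z$. Hence $t/\gcd(s,t)=m_s$, and the decimated orbit is exactly the coset $P_0+\langle R_s\rangle$.

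Next we express the $\iota$-th entry of each block. We use $\beta_{ns+\iota}=\beta_\iota+e^\iota\beta_{ns}$ together with $[\beta_{ns}]R=[\gamma_n]R_s$. Writing $u=\gamma_n \bmod m_s$, this gives
\[
P_{ns+\iota}=P_\iota+[e^\iota u]R_s,\qquad 0\le\iota\le s-1.
\]
The admissible collection $\widetilde{\mathcal P}^{(s)}$ is then the image of
\[
\widetilde U_s=\{u\in\Z/m_s\Z:\ P_\iota+[e^\iota u]R_s\neq O \text{ for all } 0\le \iota\le s-1\}
\]
under $u\mapsto (G(P_\iota+[e^\iota u]R_s))_\iota$. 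This map is injective because its first coordinate $G(P_0+[u]R_s)$ is already injective in $u$. By Lemma~\ref{lem:Walsh-to-char}, applied blockwise, each nonzero ${\bf k}\in\Delta_{2rs}^\ast$ gives
\[
\widetilde N_s^\ast S(w_{\bf k},\widetilde{\mathcal P}^{(s)})=\sum_{u\in\widetilde U_s}\psi_1(g_{\bf k}([u]R_s)),
\]
where $g_{\bf k}$ is the same function as in the proof of Theorem~\ref{thm:CaseB-Ds}.

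The estimate for $g_{\bf k}$ carries over unchanged. Its degree is at most $3\sum_{\iota} e^{2\iota}$. It is non-degenerate by the same pole-counting argument: take the maximal $\iota_0$ with nonzero coefficients. Since $\gcd(e,p)=1$, Lemma~\ref{lem:pullback-separable} shows that summand has $e^{2\iota_0}$ distinct poles, each of order $2$ or $3$. This exceeds $\sum_{\iota<\iota_0}e^{2\iota}$, so some pole of order $2$ or $3$ is not cancelled, and Lemma~\ref{lem:AS-pole-criterion} applies because $p\ge5$. We then complete the sum over $\widetilde U_s$ to the full sum over the points of $\langle R_s\rangle$ where $g_{\bf k}$ is finite. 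Lemma~\ref{lem:KS} bounds that full sum by $6q^{1/2}\sum_\iota e^{2\iota}$.

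The step needing the most care is the boundary correction, namely bounding the number of $u\in\Z/m_s\Z$ excluded from $\widetilde U_s$ by $s/\gcd(s,t)$. If the orbit avoids $O$, nothing is excluded. Otherwise $O$ occurs exactly once in each period of length $t$ of the original orbit. Over the $t/\gcd(s,t)$ blocks, each position $0\le j<t$ is covered $s/\gcd(s,t)$ times, since $(t/\gcd(s,t))\cdot s$ positions are covered uniformly modulo $t$. So at most $s/\gcd(s,t)$ blocks contain $O$. This is consistent with $\widetilde N^\ast_s=(t-s)/\gcd(s,t)$. Each omitted term has absolute value at most $1$, which yields the bound
\[
\bigl|\widetilde N_s^\ast S(w_{\bf k},\widetilde{\mathcal P}^{(s)})\bigr|\le 6q^{1/2}\textstyle\sum_\iota e^{2\iota}+s/\gcd(s,t).
\]
Finally, Lemma~\ref{lem:Walsh-discrepancy} with $d=2rs$ gives \eqref{eq:CaseB-Dtilde}.
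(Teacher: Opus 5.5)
Your proposal is correct and follows the paper's proof: the representation $P_{ns+\iota}=P_\iota+[e^\iota\gamma_n]R_s$, the identification $t/\gcd(s,t)=m_s$ under the derived RMPC, reuse of the non-degenerate $g_{\bf k}$ with $\deg(g_{\bf k})\le 3\sum_{\iota}e^{2\iota}$, completion of the sum to the subgroup $\la R_s\ra$ for Lemma~\ref{lem:KS}, and then Lemma~\ref{lem:Walsh-discrepancy} in dimension $2rs$. Your covering count showing that at most $s/\gcd(s,t)$ blocks contain $O$ is a welcome addition, since the paper simply takes this from the count $\widetilde N_s^\ast=(t-s)/\gcd(s,t)$ stated without proof in Section~2.2.
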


\begin{proof}
Firstly, the period of the sequence $\{P_{ns}\}_{n \ge 0}$ is equal to $L_s:=t/\gcd(s,t)$, and by direct computation one has
\[
P_{ns}=P_0+[\gamma_n]R_s,
\]
from which is follows firstly that that $L_s$ is equal to the period of the LCG $\{\gamma_n \bmod{m_s}\}_{n \geq 0}$; secondly, the derived RMPC of step $s$ is equivalent to the condition that the sequence $\{P_{ns}\}_{n \geq 0}$ traverses the full coset $P_0 +  \la R_s \ra$, i.e. that:
\[
L_s=m_s.
\]

\noindent More generally one again obtain by direct computation that for $0\le \iota \le s-1$ and $n\ge 0$:
\[
P_{ns+\iota}=P_{\iota}+[e^{\iota}\gamma_n]R_s.
\]

\noindent Thus under the derived RMPC of step $s$, define:
\[
\widetilde U_s:=\{u\in\Z/L_s\Z: P_{\iota}+[e^{\iota} u]R_s\neq O\text{ for all }0\le \iota\le s-1\}
\]
then the admissible non-overlapping $s$-tuple collection $\widetilde{\mathcal P}^{(s)}$ is the image of $\widetilde U_s$ under the injective map: 
\[
u \in \Z/ L_s \Z \longmapsto
\big(G(P_0+[u]R_s),G(P_1+[eu]R_s),\dots,G(P_{s-1}+[e^{s-1}u]R_s)\big) \in [0,1]^{2rs}.
\]
Thus $|\widetilde U_s|=|\widetilde{\mathcal P}^{(s)}|= \widetilde N_s^{\ast}$, that we recall is equal to $L_s=t/\gcd(s,t)$ if the orbit $\{P_n\}_{n \geq 0}$ avoids $O$, and is equal to $(t-s)/\gcd(s,t)$ otherwise. In particular that
\[
|\Z/L_s\Z\setminus \widetilde U_s|\le s/\gcd(s,t).
\]

The rest of the proof is then similar to the proof of Theorem~\ref{thm:CaseB-Ds}. Fix ${\bf k} \in\Delta_{2rs}^{\ast}$.
Then we have by similar considerations: 
\[
\widetilde N_s^{\ast}S(w_{\bf k},\widetilde{\mathcal P}^{(s)})
=
\sum_{u\in \widetilde U_s}\psi_1\big(g_{\bf k}([u]R_s)\big),
\]
where $g_{{\bf k}} \in \F_q(E)$ is the same as in {\it loc. cit.}:
\[
g_{\bf k}:=
\sum_{\iota=0}^{s-1}
\Big(\eta_{{\bf k},\iota} \cdot x \circ \tau_{P_{\iota}} \circ [e^{\iota}] +\widetilde\eta_{{\bf k},\iota} \cdot y \circ \tau_{P_{\iota}} \circ [e^{\iota}] \Big)
\]
and recall that we have established that
\[
\deg(g_{\bf k})\le 3\sum_{\iota=0}^{s-1}e^{2\iota}
\]
and that $g_{{\bf k}}$ is non-degenerate (under the same assumptions that $p \geq 5$ and $\gcd(e,p)=1$).

\bigskip

Let
\[
\widetilde U_{\bf k}:=\{u\in\Z/L_s\Z:\  g_{\bf k}([u]R_s)\neq \infty\}.
\]
Then $\widetilde U_s\subseteq \widetilde U_{\bf k}$, and so in any case that:
\[
|\widetilde U_{\bf k}\setminus \widetilde U_s|\le |\Z/L_s \Z  \setminus \widetilde U_s| \le s/\gcd(s,t).
\]
Therefore
\[
\abs{\sum_{u\in\widetilde U_s}\psi_1( g_{\bf k}([u]R_s))}
\le
\abs{\sum_{u\in\widetilde U_k}\psi_1( g_{\bf k}([u]R_s))}+s/\gcd(s,t).
\]
Applying Lemma~\ref{lem:KS} with $H$ being the cyclic subgroup $\la R_s\ra$ yields
\[
\abs{\sum_{u\in\widetilde U_{\bf k}}\psi_1( g_{\bf k}([u]R_s))}
\le 2\deg( g_{\bf k})q^{1/2}
\le 6q^{1/2}\sum_{\iota=0}^{s-1}e^{2\iota}.
\]
Therefore
\[
\abs{\widetilde N_s^{\ast}S(w_{\bf k},\widetilde{\mathcal P}^{(s)})}
\le 6q^{1/2}\sum_{\iota=0}^{s-1}e^{2 \iota}+s/\gcd(s,t).
\]
Finally applying Lemma~\ref{lem:Walsh-discrepancy} in dimension $2rs$ proves \eqref{eq:CaseB-Dtilde}.
\end{proof}

\begin{remark}
If the orbit $\{P_n\}_{n \ge 0}$ avoids $O$, then the boundary correction term $+s/\gcd(s,t)$ in \eqref{eq:CaseB-Dtilde} is not needed.
\end{remark}

\section{The general sub-period regime (Case C)}

We retain the notation
\[
R=(e-1)P_0+Q,\qquad m=\ord(R),\qquad H=\la R\ra,
\]
\[
\beta_n:=\frac{e^n-1}{e-1}\in\Z\qquad (n\ge 0; \ \beta_0=0)
\]
and we write
\[
B=\{\beta_0,\dots,\beta_{t-1} \bmod{m} \}\subset \Z/m\Z
\]
for the cycle associated with the LCG \eqref{eq:index-LCG} (recall that the period $t$ of $\{P_n\}_{n \ge 0}$ coincides with the period of the LCG \eqref{eq:index-LCG}; in particular $|B|=t$). Unlike Case B (namely RMPC), in this section we do {\bf not} require $t=m$ (nor do we require the derived RMPC); thus apriori we could not directly apply Lemma~\ref{lem:KS}. To deal with this we use Fourier techniques. 

\subsection{Definition of the Fourier $\ell^1$ mass}

\begin{definition}[Fourier transform and Fourier $\ell^1$ mass]\label{def:L1M}
Let $M\ge 1$ be a positive integer and let $A\subset \Z/M\Z$ be an arbitrary subset.
Define the Fourier transform of $\mathbf 1_A$ (the characteristic function of $A$ as a subset of $\Z/M \Z$) as:
\[
\widehat{\mathbf 1_A}^{(M)}(j):=\sum_{u\in A}e^{-2\pi iju/M},\qquad 0\le j<M,
\]
and the Fourier $\ell^1$ mass of $A$:
\[
\mathcal L_M(A):=\frac{1}{M}\sum_{j=0}^{M-1}\abs{\widehat{\mathbf 1_A}^{(M)}(j)}.
\]
\end{definition}

\bigskip

We have the elementary but useful:

\begin{lemma}\label{lem:L1-sqrt}
Let $M\ge 1$ and let $A\subset \Z/M\Z$ be an arbitrary subset.
Then
\[
\mathcal L_M(A)\le \sqrt{|A|}.
\]
\end{lemma}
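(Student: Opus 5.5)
The plan is to combine Cauchy--Schwarz with Parseval's identity on $\Z/M\Z$. By Definition~\ref{def:L1M},
\[
\mathcal L_M(A)=\frac{1}{M}\sum_{j=0}^{M-1}\abs{\widehat{\mathbf 1_A}^{(M)}(j)}.
\]
The first step is to bound this average of absolute values by the root mean square, using Cauchy--Schwarz on the $M$ terms:
\[
\sum_{j=0}^{M-1}\abs{\widehat{\mathbf 1_A}^{(M)}(j)}\le M^{1/2}\Big(\sum_{j=0}^{M-1}\abs{\widehat{\mathbf 1_A}^{(M)}(j)}^2\Big)^{1/2}.
\]

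The second step is to evaluate the $\ell^2$ sum by Parseval. Expanding the square and using orthogonality of additive characters modulo $M$, namely that $\sum_{j=0}^{M-1}e^{-2\pi i j(u-v)/M}$ equals $M$ if $u\equiv v \pmod M$ and $0$ otherwise, we get
\[
\sum_{j=0}^{M-1}\abs{\widehat{\mathbf 1_A}^{(M)}(j)}^2=\sum_{u,v\in A}\sum_{j=0}^{M-1}e^{-2\pi i j(u-v)/M}=M\,|A|.
\]

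Substituting into the first step gives
\[
\sum_{j}\abs{\widehat{\mathbf 1_A}^{(M)}(j)}\le M^{1/2}(M|A|)^{1/2}=M\sqrt{|A|},
\]
and dividing by $M$ yields $\mathcal L_M(A)\le\sqrt{|A|}$. There is no real obstacle here. The only point requiring care is the character orthogonality relation, and it holds because $u-v$ is read as a residue class modulo $M$ for $u,v\in A\subset\Z/M\Z$, so the sum is well defined and vanishes exactly when $u\neq v$. The case $A=\emptyset$ is trivial, since both sides are zero.
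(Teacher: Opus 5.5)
Your proof is correct and follows the paper's argument exactly: Parseval gives $\sum_j \lvert\widehat{\mathbf 1_A}^{(M)}(j)\rvert^2 = M\lvert A\rvert$, and Cauchy--Schwarz over the $M$ frequencies then gives $\mathcal L_M(A)\le\sqrt{\lvert A\rvert}$. The only difference is that you derive Parseval from character orthogonality, whereas the paper simply cites it.
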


\begin{proof}
Parseval gives
\[
\sum_{j=0}^{M-1}\abs{\widehat{\mathbf 1_A}^{(M)}(j)}^2=M|A|.
\]
Applying Cauchy--Schwarz,
\[
\sum_{j=0}^{M-1}\abs{\widehat{\mathbf 1_A}^{(M)}(j)}
\le \sqrt{M}\sqrt{M|A|}=M\sqrt{|A|}.
\]
Dividing by $M$ gives the result.
\end{proof}

\begin{lemma} \label{lem:completion}
Let $S\in E(\F_q)$ has order $M$, and let $H_S=\la S\ra$.
For any subset $A\subset \Z/M\Z$ and any function $F:H_S\to\mathbb C$, we have:
\begin{equation}\label{eq:completion}
\sum_{u\in A}F([u]S)
=
\frac{1}{M}\sum_{j=0}^{M-1}\widehat{\mathbf 1_A}^{(M)}(j)
\sum_{P\in H_S}\omega_j(P)F(P),
\end{equation}
where for $j=0,1\cdots,M-1$, $\omega_j$ is the group character of $H_S$ given by $\omega_j([u]S):=e^{2\pi iju/M}$.
\end{lemma}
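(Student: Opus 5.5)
The plan is to verify \eqref{eq:completion} directly by Fourier inversion on the cyclic group $H_S\cong \Z/M\Z$. The starting point is the isomorphism $u\mapsto [u]S$ from $\Z/M\Z$ onto $H_S$, which is well defined and bijective because $\ord(S)=M$. Under it, each $\omega_j$ is a well-defined character of $H_S$: if $[u]S=[u']S$ then $u\equiv u'\pmod M$, so $e^{2\pi iju/M}=e^{2\pi iju'/M}$. In particular the right-hand side of \eqref{eq:completion} is meaningful.

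Next, rewrite the inner sum over $H_S$ as a sum over $v\in\Z/M\Z$:
\[
\sum_{P\in H_S}\omega_j(P)F(P)=\sum_{v\in\Z/M\Z}e^{2\pi ijv/M}F([v]S).
\]
Then substitute $\widehat{\mathbf 1_A}^{(M)}(j)=\sum_{u\in A}e^{-2\pi iju/M}$. All sums are finite, so they can be interchanged freely, and the right-hand side of \eqref{eq:completion} becomes
\[
\sum_{u\in A}\sum_{v\in\Z/M\Z}F([v]S)\cdot\frac{1}{M}\sum_{j=0}^{M-1}e^{2\pi ij(v-u)/M}.
\]

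The final step uses orthogonality of the additive characters of $\Z/M\Z$: the innermost average equals $1$ if $v\equiv u\pmod M$ and $0$ otherwise. The double sum therefore collapses to $\sum_{u\in A}F([u]S)$, which is the left-hand side.

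There is no genuine obstacle here; the only point needing care is the well-definedness of $\omega_j$ and of the reindexing, both of which rely on $M$ being exactly the order of $S$.
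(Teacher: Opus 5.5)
Your proposal is correct and follows essentially the same route as the paper: both rest on orthogonality of the additive characters of $\Z/M\Z$ (Fourier inversion for $\mathbf 1_A$). The paper substitutes the inversion formula into $\sum_{u\in\Z/M\Z}\mathbf 1_A(u)F([u]S)$, while you expand the right-hand side and collapse it; your check that $\omega_j$ and the reindexing are well defined because $\ord(S)=M$ is a welcome addition.
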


\begin{proof}
This is the orthogonality relation on $\Z/M\Z$:
\[
\mathbf 1_A(u)=\frac{1}{M}\sum_{j=0}^{M-1}\widehat{\mathbf 1_A}^{(M)}(j)e^{2\pi iju/M}.
\]
Substituting this expression into: 
\[
\sum_{u\in A}F([u]S) = \sum_{u \in \Z/M \Z} \mathbf 1_A(u)F([u]S)
\]
yields \eqref{eq:completion}.
\end{proof}

\begin{proposition}[Orbit-sum bound via Fourier $\ell^1$ mass]\label{prop:sum-L1}
Let $S\in E(\F_q)$ has order $M$, let $A\subset \Z/M\Z$ be an arbitrary subset, and let $f\in\F_q(E)$ be a rational function that is non-degenerate.
Then
\begin{equation}\label{eq:sum-L1}
\abs{\sum_{\substack{u\in A\\ f([u]S)\neq \infty}}\psi_1(f([u]S))}
\le 2\deg(f)\,q^{1/2}\,\mathcal L_M(A).
\end{equation}
If the polar divisor of $f$ is supported on a single prime divisor of $E$, one may replace $2\deg(f)$ by $1+\deg(f)$.
\end{proposition}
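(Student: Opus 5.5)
The plan is to apply the completion identity of Lemma~\ref{lem:completion} to a function that is already zero at the poles of $f$, and then bound each complete twisted sum by Lemma~\ref{lem:KS}. On $H_S$ define
\[
F(P):=\begin{cases}\psi_1(f(P)) & f(P)\neq\infty,\\ 0 & f(P)=\infty.\end{cases}
\]
Then the left-hand side of \eqref{eq:sum-L1} is exactly $\sum_{u\in A}F([u]S)$. Here $[u]S$ depends only on $u \bmod M$, so $A\subset\Z/M\Z$ indexes points of $H_S$ without ambiguity.

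By Lemma~\ref{lem:completion},
\[
\sum_{u\in A}F([u]S)=\frac1M\sum_{j=0}^{M-1}\widehat{\mathbf 1_A}^{(M)}(j)\sum_{\substack{P\in H_S\\ f(P)\neq\infty}}\omega_j(P)\psi_1(f(P)).
\]
Each $\omega_j$ is a group character of the subgroup $H_S\subset E(\F_q)$. The function $f$ is non-degenerate by hypothesis, and $\psi_1$ is a nontrivial additive character. Hence Lemma~\ref{lem:KS}, in the form \eqref{eq:KS}, bounds each inner sum uniformly in $j$ by $2\deg(f)q^{1/2}$.

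Applying the triangle inequality to the outer sum and pulling out this uniform bound gives
\[
\Big|\sum_{u\in A}F([u]S)\Big|\le 2\deg(f)q^{1/2}\cdot\frac1M\sum_{j=0}^{M-1}\big|\widehat{\mathbf 1_A}^{(M)}(j)\big| = 2\deg(f)q^{1/2}\,\mathcal L_M(A),
\]
which is \eqref{eq:sum-L1}. In the single-pole case one uses \eqref{eq:KS-single-pole} instead, and the constant becomes $1+\deg(f)$.

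The only step that needs care is that the bound of Lemma~\ref{lem:KS} holds for every character $\omega_j$, including the trivial one $j=0$, and not just for some. This is precisely how Lemma~\ref{lem:KS} is stated, with no condition on $\omega$. Assigning $F$ the value $0$ at the poles of $f$ is what makes the completed sums coincide exactly with the sums in Lemma~\ref{lem:KS}.
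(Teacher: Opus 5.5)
Your proposal is correct and matches the paper's proof: the paper also sets $F$ equal to $\psi_1\circ f$ off the poles of $f$ and $0$ at them, then applies Lemma~\ref{lem:completion}, which turns each inner sum into a twisted sum over $H_S$ with character $\omega_j$. It bounds each such sum uniformly via Lemma~\ref{lem:KS} and sums with weights $|\widehat{\mathbf 1_A}^{(M)}(j)|/M$, using \eqref{eq:KS-single-pole} for the single-pole refinement exactly as you do.
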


\begin{proof}
Define
\[
F(P):=
\begin{cases}
\psi_1(f(P)), & f(P)\neq \infty,\\
0, & f(P)=\infty.
\end{cases}
\]
Applying Lemma~\ref{lem:completion} with this $F$, the inner sum becomes exactly the finite-value twisted subgroup sum appearing in Lemma~\ref{lem:KS} (with the subgroup of $E(\F_q)$ being taken to be $H_S$ and the group character being $\omega_j$, $j=0,1,\cdots,M-1$).
Taking absolute values and summing with weights $\abs{\widehat{\mathbf 1_A}^{(M)}(j)}/M$ yields \eqref{eq:sum-L1}.
\end{proof}

\subsection{Admissible index sets}

Recall the identity ($n \geq 0$):
\[
P_n = P_0 + [\beta_n] R
\]
and more generally ($n,\iota \geq 0$): 
\[
P_{n + \iota} = P_{\iota} + [e^{\iota} \beta_n] R
\]

\bigskip
\noindent For one-point discrepancy, define
\[
B_{\mathrm{adm}}:=\{u\in B:\ P_0+[u]R\neq O\}.
\]
Then $\mathcal{P}$ is the image of $B_{\mathrm{adm}}$ under the injective map:
\[
u \in B \longmapsto
G(P_0+[u]R)  \in [0,1]^{2r}.
\]
and so
\[
|B_{\mathrm{adm}}|= |\mathcal{P}| =N^{\ast}, \qquad |B \setminus B_{\mathrm{adm}} | \leq 1
\]

\noindent For serial discrepancy of length $s$, define
\[
B_{s,\mathrm{adm}}:=\{u\in B:\ P_{\iota}+[e^{\iota} u]R\neq O\ \text{for all }0\le \iota\le s-1\}.
\]
Then $\mathcal{P}^{(s)}$ is the image of $B_{s,\mathrm{adm}}$ under the injective map:
\[
u \in B \longmapsto
\big(G(P_0+[u]R),G(P_1 +[eu]R),\dots,G(P_{s-1}+[e^{s-1}u]R)\big) \in [0,1]^{2rs}.
\]
and so
\[
|B_{s,\mathrm{adm}}|= |\mathcal{P}^{(s)}|  =N_s^{\ast}, \qquad |B \setminus  B_{s,\mathrm{adm}}| \leq s
\]

\bigskip

\noindent For the non-overlapping variant, fix $s\ge 2$ and keep the notation
\[
R_s:=P_s-P_0, \qquad m_s:=\ord(R_s)
\]
\[
\gamma_n:=\frac{e^{ns}-1}{e^s-1}\in\Z\qquad (n\ge 0; \gamma_0=0)
\]
\[
L_s:=t/\gcd(s,t).
\]
and recall that $L_s$ is the period of the sequence $\{P_{ns}\}_{n \geq 0}$.

\bigskip
Also recall that:
\[
P_{ns}=P_0+[\gamma_n]R_s,
\qquad
\gamma_{n+1}\equiv e^s\gamma_n+1\pmod{m_s},
\qquad
\gamma_0\equiv 0 \pmod{m_s}.
\]
from which one deduces (as seen before) that $L_s$ is equal to the period of the LCG $\{\gamma_n \bmod{m_s}\}_{n \geq 0}$. 

\bigskip

Set
\[
\widetilde{B}_s:=\{\gamma_0,\dots,\gamma_{L_s-1} \bmod{m_s}\}\subset \Z/m_s\Z.
\]
and define:

\[
\widetilde{B}_{s,\mathrm{adm}}:=\{u\in \widetilde{B}_s:\ P_{\iota}+[e^{\iota} u]R_s\neq O\ \text{for all }0\le \iota\le s-1\}.
\]

\bigskip

\noindent Again recall the more general identity ($n,\iota \geq 0$):
\[
P_{ns + \iota} = P_{\iota} + [e^{\iota} \gamma_n]R_s
\]
\noindent and thus $\widetilde{\mathcal{P}}^{(s)}$ is the image of $ \widetilde{B}_{s,\mathrm{adm}}$ under the injective map:

\[
u \in \widetilde{B}_s \longmapsto
\big(G(P_0+[u]R_s),  G(P_1+ [e u]R_s) , \dots,G(P_{s-1}+[e^{s-1}u]R_s)\big) \in [0,1]^{2rs}.
\]
and so
\[
|\widetilde B_{s,\mathrm{adm}}|= |\widetilde{\mathcal{P}}^{(s)}|  =\widetilde{N}_s^{\ast}, \qquad |\widetilde{B}_s \setminus  \widetilde B_{s,\mathrm{adm}}| \leq s/\gcd(s,t)
\]

\begin{lemma}\label{lem:L1-remove}
Let $M\ge 1$ and let $A\subseteq B\subseteq \Z/M\Z$.
Then
\[
\mathcal L_M(A)\le \mathcal L_M(B)+|B\setminus A|^{1/2}.
\]
In particular,
\[
\mathcal L_m(B_{\mathrm{adm}})\le \mathcal L_m(B)+1,
\]
\[
\mathcal L_m(B_{s,\mathrm{adm}})\le \mathcal L_m(B)+s^{1/2},
\]
\[
\mathcal L_{m_s}(\widetilde{B}_{s,\mathrm{adm}})\le \mathcal L_{m_s}(\widetilde{B}_s)+\big(s/\gcd(s,t)\big)^{1/2},
\]
\end{lemma}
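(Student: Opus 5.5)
The plan is to exploit linearity of the Fourier transform together with the triangle inequality for $\mathcal L_M$, and then apply Lemma~\ref{lem:L1-sqrt} to the removed part. Since $A\subseteq B$, we have $\mathbf 1_A=\mathbf 1_B-\mathbf 1_{B\setminus A}$ pointwise on $\Z/M\Z$. The Fourier transform of Definition~\ref{def:L1M} is linear in the indicator function, so for every $0\le j<M$
\[
\widehat{\mathbf 1_A}^{(M)}(j)=\widehat{\mathbf 1_B}^{(M)}(j)-\widehat{\mathbf 1_{B\setminus A}}^{(M)}(j).
\]
Taking absolute values, summing over $j$ and dividing by $M$ gives the subadditivity estimate
\[
\mathcal L_M(A)\le \mathcal L_M(B)+\mathcal L_M(B\setminus A).
\]

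Next, Lemma~\ref{lem:L1-sqrt} applied to the subset $B\setminus A\subset \Z/M\Z$ gives $\mathcal L_M(B\setminus A)\le |B\setminus A|^{1/2}$. This yields the general inequality $\mathcal L_M(A)\le \mathcal L_M(B)+|B\setminus A|^{1/2}$. The degenerate case $B\setminus A=\emptyset$ is consistent with this, since then both sides agree trivially.

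The three particular cases follow by substituting the cardinality bounds recorded in Section~4.2:
\begin{itemize}
\item $|B\setminus B_{\mathrm{adm}}|\le 1$, with $M=m$;
\item $|B\setminus B_{s,\mathrm{adm}}|\le s$, with $M=m$;
\item $|\widetilde B_s\setminus \widetilde B_{s,\mathrm{adm}}|\le s/\gcd(s,t)$, with $M=m_s$.
\end{itemize}
In each case the containments $B_{\mathrm{adm}}\subseteq B$, $B_{s,\mathrm{adm}}\subseteq B$ and $\widetilde B_{s,\mathrm{adm}}\subseteq \widetilde B_s$ hold by definition, so the general inequality applies directly.

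There is no real obstacle here. The only points requiring care are that the first special case uses $1^{1/2}=1$, and that the cardinality bounds on the deleted sets (at most one, $s$, or $s/\gcd(s,t)$ points removed) are taken from the admissible-count discussion in Section~2.2 rather than reproved.
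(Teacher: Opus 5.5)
Your proposal is correct and follows the paper's proof essentially verbatim. You write $\mathbf 1_A=\mathbf 1_B-\mathbf 1_{B\setminus A}$, use linearity and the triangle inequality to get $\mathcal L_M(A)\le \mathcal L_M(B)+\mathcal L_M(B\setminus A)$, bound the last term by Lemma~\ref{lem:L1-sqrt}, and read off the three special cases from the recorded bounds on the number of deleted indices.
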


\begin{proof}
Write $B=A\sqcup C$ with $C=B\setminus A$.
Then
\[
\widehat{\mathbf 1_A}^{(M)}(j)=\widehat{\mathbf 1_B}^{(M)}(j)-\widehat{\mathbf 1_C}^{(M)}(j),
\]
so
\[
\abs{\widehat{\mathbf 1_A}^{(M)}(j)}
\le
\abs{\widehat{\mathbf 1_B}^{(M)}(j)}+\abs{\widehat{\mathbf 1_C}^{(M)}(j)}.
\]
Summing over $j$ and dividing by $M$ gives
\[
\mathcal L_M(A)\le \mathcal L_M(B)+\mathcal L_M(C).
\]
Apply Lemma~\ref{lem:L1-sqrt} to obtain $\mathcal L_M(C) \leq |C|^{1/2}$ and we are done.
\end{proof}

\begin{remark}
If the orbit $\{P_n\}_{n \ge 0}$ avoids $O$, then $B=B_{\mathrm{adm}}=B_{s,\mathrm{adm}}  $, and $\widetilde{B}_s=\widetilde{B}_{s,\mathrm{adm}}$, and so the boundary correction terms:
\[
+1, \quad \quad +s^{1/2}, \quad  \quad + (s/\gcd(s,t))^{1/2}
\]
in Lemma~\ref{lem:L1-remove} are not needed.
\end{remark}

\subsection{Discrepancy bound in the general sub-period regime}

\begin{theorem}[Discrepancy bound via admissible Fourier mass]\label{thm:CaseC-D-L1}
Let
\[
\mathcal P:=\{ G(P_n)\}_{\substack{0\le n < t\\ P_n\neq O}},
\qquad N^{\ast}:=|\mathcal P|.
\]
Assume $p\ge 5$.
Then
\begin{equation}\label{eq:CaseC-D-L1}
D(\mathcal P)
\le 1-\Big(1-\frac{1}{p^a}\Big)^{2r}
+\frac{4q^{1/2}}{N^{\ast}}\,\mathcal L_m(B_{\mathrm{adm}})\,\big(C_0\ln(p^a)+1\big)^{2r}.
\end{equation}
Consequently,
\begin{equation}\label{eq:CaseC-D-L1-rough}
D(\mathcal P)
\le 1-\Big(1-\frac{1}{p^a}\Big)^{2r}
+\frac{4q^{1/2}}{N^{\ast}}\,\big(\mathcal L_m(B)+1\big)\,\big(C_0\ln(p^a)+1\big)^{2r}.
\end{equation}
\end{theorem}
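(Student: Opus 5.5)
The plan is to combine Lemma~\ref{lem:Walsh-to-char}, Proposition~\ref{prop:sum-L1} (single-pole form) and Lemma~\ref{lem:Walsh-discrepancy}, exactly as in the proof of Theorem~\ref{thm:CaseB-D}. The only change is that the full subgroup sum over $H=\la R\ra$ is replaced by a sum over the index set $B_{\mathrm{adm}}\subset\Z/m\Z$.

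Fix a nonzero Walsh frequency ${\bf k}\in\Delta_{2r}^{\ast}$. Since $\mathcal P$ is the injective image of $B_{\mathrm{adm}}$ under $u\mapsto G(P_0+[u]R)$, Lemma~\ref{lem:Walsh-to-char} gives
\[
N^{\ast}S(w_{\bf k},\mathcal P)=\sum_{u\in B_{\mathrm{adm}}}\psi_1\big(f_{{\bf k},P_0}([u]R)\big),
\]
where $f_{{\bf k},P_0}=\eta_{\bf k}\cdot x\circ\tau_{P_0}+\widetilde\eta_{\bf k}\cdot y\circ\tau_{P_0}$ is the function from the proof of Theorem~\ref{thm:CaseB-D}. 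As shown there, its polar divisor is the single point $-P_0$, with pole order $2$ or $3$. Because $p\ge5$, Lemma~\ref{lem:AS-pole-criterion} shows that it is non-degenerate, and $\deg f_{{\bf k},P_0}\le 3$.

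For $u\in B_{\mathrm{adm}}$ we have $P_0+[u]R\neq O$, which is exactly the condition $f_{{\bf k},P_0}([u]R)\neq\infty$. Hence the sum above coincides with the left side of \eqref{eq:sum-L1}, taken with $S=R$, $M=m$ and $A=B_{\mathrm{adm}}$. Proposition~\ref{prop:sum-L1} in its single-pole version then gives
\[
\big|N^{\ast}S(w_{\bf k},\mathcal P)\big|\le (1+3)\,q^{1/2}\,\mathcal L_m(B_{\mathrm{adm}}).
\]
Thus $|S(w_{\bf k},\mathcal P)|\le 4q^{1/2}\mathcal L_m(B_{\mathrm{adm}})/N^{\ast}$ uniformly in ${\bf k}$. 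Applying Lemma~\ref{lem:Walsh-discrepancy} with $d=2r$ yields \eqref{eq:CaseC-D-L1}. The rough form \eqref{eq:CaseC-D-L1-rough} follows immediately from Lemma~\ref{lem:L1-remove}, which gives $\mathcal L_m(B_{\mathrm{adm}})\le\mathcal L_m(B)+1$.

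The only step needing care is matching the excluded points. The condition $f([u]R)\neq\infty$ in Proposition~\ref{prop:sum-L1} must coincide with the admissibility condition defining $B_{\mathrm{adm}}$. This holds because the only pole of $f_{{\bf k},P_0}$ is at $-P_0$, so $f_{{\bf k},P_0}([u]R)=\infty$ exactly when $[u]R=-P_0$, i.e.\ when $P_0+[u]R=O$. In particular no extra boundary term arises in \eqref{eq:CaseC-D-L1}.
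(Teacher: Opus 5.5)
Your proposal is correct and follows the paper's proof. Both rewrite $N^{\ast}S(w_{\bf k},\mathcal P)$ as the sum of $\psi_1(f_{{\bf k},P_0}([u]R))$ over $u\in B_{\mathrm{adm}}$, apply the single-pole form of Proposition~\ref{prop:sum-L1} with $S=R$, $M=m$, $A=B_{\mathrm{adm}}$, conclude with Lemma~\ref{lem:Walsh-discrepancy}, and get the rough bound from Lemma~\ref{lem:L1-remove}. Your explicit check that $f_{{\bf k},P_0}([u]R)=\infty$ exactly when $P_0+[u]R=O$ spells out a step the paper leaves implicit.
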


\begin{proof}
We know that $\mathcal{P}$ is the image of $B_{\mathrm{adm}}$ under 
\[
u \in B \longmapsto G(P_0 +[u] R) \in [0,1]^{2r}
\]
So given Walsh frequency ${\bf k} \in\Delta_{2r}^{\ast}$,
by Lemma~\ref{lem:Walsh-to-char}, we have
\[
N^{\ast}S(w_{\bf k},\mathcal P)
=
\sum_{u\in B_{\mathrm{adm}}}
\psi_1\big(\eta_{\bf k} x(P_0+[u]R)+\widetilde\eta_{\bf k} y(P_0+[u]R)\big).
\]
Hence
\[
N^{\ast}S(w_{\bf k},\mathcal P)
=\sum_{u\in B_{\mathrm{adm}}} \psi_1\big(f_{{\bf k},P_0}([u] R)   \big).
\]
where $f_{{\bf k},P_0}$ is as in Theorem~\ref{thm:CaseB-D}, where we have shown that it is non-degenerate and $\deg(f_{{\bf k},P_0}) \leq 3 $.
Applying Proposition~\ref{prop:sum-L1} with $f=f_{{\bf k},P_0}$, $S=R$, $M=m$, and $A=B_{\mathrm{adm}}$ gives
\[
\abs{N^{\ast}S(w_{\bf k},\mathcal P)}
\le (1+\deg(f_{{\bf k},P_0}))q^{1/2}\,\mathcal L_m(B_{\mathrm{adm}})
\le 4q^{1/2}\mathcal L_m(B_{\mathrm{adm}}).
\]
Applying Lemma~\ref{lem:Walsh-discrepancy} with $d=2r$ yields \eqref{eq:CaseC-D-L1}.
The rougher estimate \eqref{eq:CaseC-D-L1-rough} follows from Lemma~\ref{lem:L1-remove}.
\end{proof}

\subsection{Serial discrepancy in the general sub-period regime}

\begin{theorem}[Serial discrepancy bound via admissible Fourier mass]\label{thm:CaseC-Ds-L1}
Assume $|e|\ge 2$, $\gcd(e,p)=1$, and $p\ge 5$.
Fix $2\le s<t$ and let $\mathcal P^{(s)}$ be the admissible serial $s$-tuple collection, of cardinality $N_s^{\ast}$.
Then
\begin{equation}\label{eq:CaseC-Ds-L1}
D(\mathcal P^{(s)})
\le 1-\Big(1-\frac{1}{p^a}\Big)^{2rs}
+\frac{6q^{1/2}}{N_s^{\ast}}\Big(\sum_{\iota=0}^{s-1}e^{2 \iota}\Big)\mathcal L_m(B_{s,\mathrm{adm}})\,\big(C_0\ln(p^a)+1\big)^{2rs}.
\end{equation}
Consequently,
\begin{equation}\label{eq:CaseC-Ds-L1-rough}
D(\mathcal P^{(s)})
\le 1-\Big(1-\frac{1}{p^a}\Big)^{2rs}
+\frac{6q^{1/2}}{N_s^{\ast}}\Big(\sum_{\iota=0}^{s-1}e^{2 \iota}\Big)\big(\mathcal L_m(B)+s^{1/2}\big)\,\big(C_0\ln(p^a)+1\big)^{2rs}.
\end{equation}
\end{theorem}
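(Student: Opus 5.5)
The plan follows the proof of Theorem~\ref{thm:CaseB-Ds}, with the Kohel--Shparlinski bound over the full subgroup $\la R\ra$ replaced by Proposition~\ref{prop:sum-L1} applied to the index set $B_{s,\mathrm{adm}}\subset \Z/m\Z$.

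First, recall from section 4.2 that $\mathcal P^{(s)}$ is the injective image of $B_{s,\mathrm{adm}}$ under
\[
u\longmapsto \big(G(P_0+[u]R),\dots,G(P_{s-1}+[e^{s-1}u]R)\big).
\]
Fix a nonzero Walsh frequency ${\bf k}\in\Delta_{2rs}^{\ast}$. Exactly as in the proof of Theorem~\ref{thm:CaseB-Ds}, Lemma~\ref{lem:Walsh-to-char} applied blockwise gives
\[
N_s^{\ast}S(w_{\bf k},\mathcal P^{(s)})=\sum_{u\in B_{s,\mathrm{adm}}}\psi_1\big(g_{\bf k}([u]R)\big).
\]
Here $g_{\bf k}=\sum_{\iota}\big(\eta_{{\bf k},\iota}\, x\circ\tau_{P_\iota}\circ[e^\iota]+\widetilde\eta_{{\bf k},\iota}\, y\circ\tau_{P_\iota}\circ[e^\iota]\big)$ is the same function as before.

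The proof of Theorem~\ref{thm:CaseB-Ds} already established two facts, and neither used RMPC. The first is the degree bound \eqref{eq:deg-gk}, $\deg g_{\bf k}\le 3\sum_{\iota=0}^{s-1}e^{2\iota}$. The second is non-degeneracy of $g_{\bf k}$. That argument takes the maximal $\iota_0$ with a nonzero coefficient pair. It then uses $\gcd(e,p)=1$ and $|e|\ge2$ to find a pole of order $2$ or $3$ not cancelled by lower summands, and concludes with $p\ge5$ and Lemma~\ref{lem:AS-pole-criterion}. I will simply cite these two facts.

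One point needs checking: for $u\in B_{s,\mathrm{adm}}$, the value $g_{\bf k}([u]R)$ is finite. Indeed, every pole of $g_{\bf k}$ lies among the poles of the individual summands. These are the points $P$ with $P_\iota+[e^\iota]P=O$, and such $P=[u]R$ are excluded by the definition of $B_{s,\mathrm{adm}}$. Hence the condition ``$g_{\bf k}([u]R)\ne\infty$'' in Proposition~\ref{prop:sum-L1} is automatic on $A=B_{s,\mathrm{adm}}$. Because of this, unlike Case B, no separate boundary term $+s$ appears; the boundary effect is absorbed into the Fourier mass of $B_{s,\mathrm{adm}}$ instead.

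Applying Proposition~\ref{prop:sum-L1} with $S=R$, $M=m$, $A=B_{s,\mathrm{adm}}$ and $f=g_{\bf k}$, in the general (non-single-pole) form with constant $2\deg f$, gives
\[
|N_s^{\ast}S(w_{\bf k},\mathcal P^{(s)})|\le 6q^{1/2}\Big(\sum_{\iota=0}^{s-1}e^{2\iota}\Big)\mathcal L_m(B_{s,\mathrm{adm}}).
\]
This holds uniformly in ${\bf k}$. Dividing by $N_s^{\ast}$ and applying Lemma~\ref{lem:Walsh-discrepancy} with $d=2rs$ yields \eqref{eq:CaseC-Ds-L1}. The rough form \eqref{eq:CaseC-Ds-L1-rough} then follows from the inequality $\mathcal L_m(B_{s,\mathrm{adm}})\le\mathcal L_m(B)+s^{1/2}$ of Lemma~\ref{lem:L1-remove}.

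There is no real obstacle here. The only step needing care is confirming that the finiteness condition on $B_{s,\mathrm{adm}}$ matches the hypothesis of Proposition~\ref{prop:sum-L1}, so that no extra error term arises. The non-degeneracy argument is reused verbatim and is independent of the period structure.
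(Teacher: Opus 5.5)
Your proposal is correct and is essentially the paper's own proof: write $N_s^{\ast}S(w_{\bf k},\mathcal P^{(s)})$ as the sum of $\psi_1(g_{\bf k}([u]R))$ over $B_{s,\mathrm{adm}}$, reuse the degree bound and non-degeneracy of $g_{\bf k}$ from Theorem~\ref{thm:CaseB-Ds}, apply Proposition~\ref{prop:sum-L1} with $S=R$, $M=m$, $A=B_{s,\mathrm{adm}}$, then Lemma~\ref{lem:Walsh-discrepancy}, and use Lemma~\ref{lem:L1-remove} for the rough form. Your explicit check that $g_{\bf k}$ is regular at $[u]R$ for $u\in B_{s,\mathrm{adm}}$, so that no extra boundary term arises, is a detail the paper leaves implicit, and it is correct.
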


\begin{proof}
We know that $\mathcal{P}^{(s)}$ is the image of $B_{s,\mathrm{adm}}$ under the injective map:
\[
u \in B \longmapsto \big(G(P_0+[u]R),G(P_1+[eu]R),\dots,G(P_{s-1}+[e^{s-1}u]R)\big) \in [0,1]^{2rs}
\]
So given Walsh frequency ${\bf k}\in\Delta_{2rs}^{\ast}$, we have as in the proof of Theorem~\ref{thm:CaseB-Ds}:
\[
N_s^{\ast}S(w_{\bf k},\mathcal P^{(s)}) = \sum_{u \in B_{s,\mathrm{adm}}} \psi_1\big(g_{\bf k}([u]R)\big)
\]
where $g_{\bf k}$ is the same function in the proof of Theorem~\ref{thm:CaseB-Ds}, where we have shown that it is non-degenerate and
\[
\deg(g_{\bf k})\le 3\sum_{\iota=0}^{s-1}e^{2 \iota}
\]
Applying Proposition~\ref{prop:sum-L1} with $f=g_{\bf k}$, $S=R$, $M=m$, and $A=B_{s,\mathrm{adm}}$ yields
\[
\abs{N_s^{\ast}S(w_{\bf k},\mathcal P^{(s)})}
\le 2\deg(g_{\bf k})q^{1/2}\,\mathcal L_m(B_{s,\mathrm{adm}})
\le 6q^{1/2}\Big(\sum_{\iota=0}^{s-1}e^{2 \iota}\Big)\mathcal L_m(B_{s,\mathrm{adm}}).
\]
Applying Lemma~\ref{lem:Walsh-discrepancy} in dimension $2rs$ yields \eqref{eq:CaseC-Ds-L1}.
The rough bound \eqref{eq:CaseC-Ds-L1-rough} follows from Lemma~\ref{lem:L1-remove}.
\end{proof}

\subsection{Non-overlapping discrepancy in the general sub-period regime}

\begin{theorem}[Non-overlapping discrepancy bound via admissible Fourier mass]\label{thm:CaseC-Dtilde-L1}
Assume $|e|\ge 2$, $\gcd(e,p)=1$, and $p\ge 5$.
Fix $s\ge 2$ and let $\widetilde{\mathcal P}^{(s)}$ be the admissible non-overlapping $s$-tuple collection, of cardinality $\widetilde N_s^{\ast}$.
Then
\begin{equation}\label{eq:CaseC-Dtilde-L1}
D(\widetilde{\mathcal P}^{(s)})
\le 1-\Big(1-\frac{1}{p^a}\Big)^{2rs}
+\frac{6q^{1/2}}{\widetilde N_s^{\ast}}\Big(\sum_{\iota=0}^{s-1}e^{2 \iota}\Big)\mathcal L_{m_s}(\widetilde B_{s,\mathrm{adm}})\,\big(C_0\ln(p^a)+1\big)^{2rs}.
\end{equation}
Consequently,
\begin{equation}\label{eq:CaseC-Dtilde-L-rough}
D(\widetilde{\mathcal P}^{(s)})
\le 1-\Big(1-\frac{1}{p^a}\Big)^{2rs}
+\frac{6q^{1/2}}{\widetilde N_s^{\ast}}\Big(\sum_{\iota=0}^{s-1}e^{2 \iota}\Big) \Big(\mathcal L_{m_s}(\widetilde B_{s}) + \big( s/\gcd(s,t)\big)^{1/2} \Big)\,\big(C_0\ln(p^a)+1\big)^{2rs}
\end{equation}
\end{theorem}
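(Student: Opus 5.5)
The proof will follow the pattern of Theorem~\ref{thm:CaseC-Ds-L1}, with the subgroup $\la R\ra$ and the index set $B$ replaced by $\la R_s\ra$ and $\widetilde B_s$, as in the proof of Theorem~\ref{thm:CaseB-Dtilde}. We recall from section 4.2 that $\widetilde{\mathcal P}^{(s)}$ is the image of $\widetilde B_{s,\mathrm{adm}}\subset \Z/m_s\Z$ under the injective map
\[
u\longmapsto \big(G(P_0+[u]R_s),G(P_1+[eu]R_s),\dots,G(P_{s-1}+[e^{s-1}u]R_s)\big).
\]
This rests on the identity $P_{ns+\iota}=P_\iota+[e^\iota\gamma_n]R_s$ and on the fact that $n\mapsto\gamma_n \bmod m_s$ is injective on $0\le n<L_s$, because $L_s$ is the period of that LCG. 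Hence $|\widetilde B_{s,\mathrm{adm}}|=\widetilde N_s^\ast$.

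Next, fix a nonzero Walsh frequency ${\bf k}\in\Delta_{2rs}^\ast$. Exactly as in the proof of Theorem~\ref{thm:CaseB-Dtilde}, Lemma~\ref{lem:Walsh-to-char} applied blockwise gives
\[
\widetilde N_s^\ast S(w_{\bf k},\widetilde{\mathcal P}^{(s)})=\sum_{u\in\widetilde B_{s,\mathrm{adm}}}\psi_1\big(g_{\bf k}([u]R_s)\big).
\]
Here $g_{\bf k}$ is the same function as in Theorem~\ref{thm:CaseB-Ds}, namely a sum of terms $\eta_{{\bf k},\iota}\, x\circ\tau_{P_\iota}\circ[e^\iota]+\widetilde\eta_{{\bf k},\iota}\, y\circ\tau_{P_\iota}\circ[e^\iota]$.

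One point needs checking. The earlier non-degeneracy argument and the degree bound $\deg(g_{\bf k})\le 3\sum_{\iota}e^{2\iota}$ used only $|e|\ge2$, $\gcd(e,p)=1$ and $p\ge5$, together with the pole structure given by Lemma~\ref{lem:pullback-separable} and Lemma~\ref{lem:AS-pole-criterion}. They did not use RMPC or which subgroup the point $[u]R$ ranges over. Both facts therefore carry over unchanged.

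With this in place, apply Proposition~\ref{prop:sum-L1} with $S=R_s$, $M=m_s$, $A=\widetilde B_{s,\mathrm{adm}}$ and $f=g_{\bf k}$. One caveat about the sum: on $\widetilde B_{s,\mathrm{adm}}$ no point $P_\iota+[e^\iota u]R_s$ equals $O$, so $g_{\bf k}([u]R_s)$ is finite there. Indeed, the poles of $g_{\bf k}$ are contained in the union of the sets $[e^\iota]^{-1}(-P_\iota)$. Consequently the restricted sum in \eqref{eq:sum-L1} coincides with our sum. This yields
\[
\big|\widetilde N_s^\ast S(w_{\bf k},\widetilde{\mathcal P}^{(s)})\big|\le 2\deg(g_{\bf k})\,q^{1/2}\,\mathcal L_{m_s}(\widetilde B_{s,\mathrm{adm}})\le 6q^{1/2}\Big(\sum_{\iota=0}^{s-1}e^{2\iota}\Big)\mathcal L_{m_s}(\widetilde B_{s,\mathrm{adm}}).
\]
Feeding this uniform bound over ${\bf k}$ into Lemma~\ref{lem:Walsh-discrepancy} in dimension $d=2rs$ gives \eqref{eq:CaseC-Dtilde-L1}. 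The rough form \eqref{eq:CaseC-Dtilde-L-rough} then follows from the third inequality of Lemma~\ref{lem:L1-remove}, using $|\widetilde B_s\setminus\widetilde B_{s,\mathrm{adm}}|\le s/\gcd(s,t)$.

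The only step requiring care, rather than routine transcription, is the reuse of the non-degeneracy and degree facts for $g_{\bf k}$. We must confirm that they are intrinsic to the function in $\F_q(E)$ and independent of the subgroup $\la R_s\ra$ on which it is evaluated. We must also confirm that the finiteness of $g_{\bf k}$ on admissible indices holds, so that no boundary correction beyond Lemma~\ref{lem:L1-remove} is needed.
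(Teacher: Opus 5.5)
Your proposal is correct and follows essentially the same route as the paper's proof. Both write the Walsh sum as $\sum_{u\in\widetilde B_{s,\mathrm{adm}}}\psi_1(g_{\bf k}([u]R_s))$, reuse the degree bound and non-degeneracy of $g_{\bf k}$ from Theorem~\ref{thm:CaseB-Ds}, apply Proposition~\ref{prop:sum-L1} with $S=R_s$, $M=m_s$, and then invoke Lemma~\ref{lem:Walsh-discrepancy} and Lemma~\ref{lem:L1-remove}. Your extra checks are correct and make explicit what the paper leaves implicit: $g_{\bf k}$ is finite on admissible indices, and its non-degeneracy does not depend on which subgroup it is evaluated on.
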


\begin{proof}
We know that $\widetilde{\mathcal{P}}^{(s)}$ is the image of $\widetilde{B}_{s,\mathrm{adm}}$ under the injective map:
\[
u \in \widetilde{B}_s \longmapsto \big(G(P_0+[u]R_s),G(P_1+[eu]R_s),\dots,G(P_{s-1}+[e^{s-1}u]R_s)\big) \in [0,1]^{2rs}
\]
So given Walsh frequency ${\bf k}\in\Delta_{2rs}^{\ast}$, we have as in the proof of Theorem~\ref{thm:CaseB-Dtilde}:
\[
\widetilde{N}_s^{\ast}S(w_{\bf k},\widetilde{\mathcal P}^{(s)}) = \sum_{u \in \widetilde{B}_{s,\mathrm{adm}}   } \psi_1\big(g_{\bf k}([u]R_s)\big)
\]

Applying Proposition~\ref{prop:sum-L1} with $f=g_{\bf k}$, $S=R_s$, $M=m_s$, and $A=\widetilde{B}_{s,\mathrm{adm}} $ yields
\[
\abs{\widetilde{N}_s^{\ast}S(w_{\bf k},\widetilde{\mathcal P}^{(s)})}
\le 2\deg(g_{\bf k})q^{1/2}\,\mathcal L_{m_s}(\widetilde{B}_{s,\mathrm{adm}}  )
\le 6q^{1/2}\Big(\sum_{\iota=0}^{s-1}e^{2 \iota}\Big)\mathcal L_{m_s}(\widetilde{B}_{s,\mathrm{adm}} ).
\]

Applying Lemma~\ref{lem:Walsh-discrepancy} in dimension $2rs$ yields \eqref{eq:CaseC-Dtilde-L1}. The rough bound \eqref{eq:CaseC-Dtilde-L-rough} again follows from Lemma~\ref{lem:L1-remove}.
\end{proof}

\section{Arithmetic structure of the index sequence}

Theorem~\ref{thm:CaseC-D-L1}, Theorem~\ref{thm:CaseC-Ds-L1}, and Theorem~\ref{thm:CaseC-Dtilde-L1} reduce the discrepancy problem to admissible Fourier masses of one-dimensional LCG index sets.
This section isolates the arithmetic structure of those index sets.

\subsection{Local behavior modulo prime powers}

Write the prime-power factorization
\[
m=\prod_{\ell}\ell^{\nu_\ell}.
\]
For a prime $\ell\mid m$, let
\[
a_\ell:=v_\ell(e-1).
\]
where $v_{\ell}$ is the normalized valuation at the prime $\ell$.

\begin{lemma}[Local behavior modulo prime powers]\label{lem:local-behavior}
Let $\ell^{\nu}\parallel m$ (so $\nu = \nu_{\ell}$) and set $a:=a_{\ell}$.
Then:
\begin{enumerate}[label=(\alph*),leftmargin=*]
\item For every $b\le \min\{a,\nu\}$, one has
\[
\beta_n\equiv n\pmod{\ell^b}.
\]
In particular, if $a\ge \nu$, then $\beta_n\equiv n\pmod{\ell^\nu}$ for all $n$.
\item If $a=0$, then there exists a unique fixed point $\beta_{\ast,\ell} \bmod{\ell^{\nu}}\in (\Z/\ell^\nu\Z)^{\times}$ such that
\[
\beta_{\ast,\ell}\equiv e\beta_{\ast,\ell}+1\pmod{\ell^\nu},
\]
and
\[
\beta_n-\beta_{\ast,\ell}\equiv -\beta_{\ast,\ell}e^n\pmod{\ell^\nu}.
\]
\end{enumerate}
\end{lemma}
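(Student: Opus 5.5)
Both parts follow from the recursion $\beta_{n+1}=e\beta_n+1$, $\beta_0=0$, in \eqref{eq:index-LCG}, reduced modulo $\ell^{b}$ or $\ell^\nu$. The closed form $\beta_n=(e^n-1)/(e-1)$ is not needed, which avoids dividing by $e-1$ when $\ell\mid e-1$.

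For part (a), take $b\le\min\{a,\nu\}$. Then $\ell^b\mid \ell^a\mid e-1$, so $e\equiv 1\pmod{\ell^b}$. The recursion becomes $\beta_{n+1}\equiv \beta_n+1\pmod{\ell^b}$. Induction from $\beta_0=0$ gives $\beta_n\equiv n\pmod{\ell^b}$. The case $a\ge\nu$ is just the instance $b=\nu$. Equivalently, one may use $\beta_n=\sum_{i<n}e^i\equiv n\pmod{\ell^b}$.

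For part (b), $a=0$ means $\ell\nmid e-1$, so $e-1\in(\Z/\ell^\nu\Z)^\times$. The fixed-point equation $x\equiv ex+1$ is $(1-e)x\equiv 1$. It therefore has the unique solution $\beta_{\ast,\ell}\equiv(1-e)^{-1}\pmod{\ell^\nu}$, which is a unit because it is the inverse of a unit. Put $\delta_n:=\beta_n-\beta_{\ast,\ell}$. Subtracting the fixed-point relation from the recursion gives $\delta_{n+1}\equiv e\,\delta_n\pmod{\ell^\nu}$, so $\delta_n\equiv e^n\delta_0=-\beta_{\ast,\ell}e^n$. This is the claimed identity.

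No step is a real obstacle. The only points needing care are the following.
\begin{itemize}
\item In (a), one must avoid writing $(e^n-1)/(e-1)$ modulo $\ell^b$ when $\ell\mid e-1$; working with the recursion handles this.
\item In (b), one must check that $\beta_{\ast,\ell}$ lies in $(\Z/\ell^\nu\Z)^\times$, which holds since it is the inverse of the unit $1-e$.
\item In (b), $e$ itself need not be a unit modulo $\ell$. This is harmless for the identity, and by Lemma~\ref{lem:gcd-em} $e$ is in fact a unit since $\ell\mid m$.
\end{itemize}
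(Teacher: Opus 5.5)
Your proposal is correct and follows the paper's proof essentially verbatim. In (a) you reduce the recursion $\beta_{n+1}=e\beta_n+1$ modulo $\ell^b$ using $e\equiv 1$, and in (b) you use the invertibility of $1-e$ modulo $\ell^\nu$ to get the unique fixed point and then iterate $\beta_{n+1}-\beta_{\ast,\ell}\equiv e(\beta_n-\beta_{\ast,\ell})$; your added remark that $\beta_{\ast,\ell}=(1-e)^{-1}$ is a unit spells out a point the paper only asserts.
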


\begin{proof}
(a) As $e\equiv 1\pmod{\ell^b}$, so the recursion \eqref{eq:index-LCG} becomes
\[
\beta_{n+1}\equiv \beta_n+1\pmod{\ell^b},
\qquad \beta_0\equiv 0,
\]
hence $\beta_n\equiv n\pmod{\ell^b}$.

(b) If $a=0$, then $1-e$ is invertible modulo $\ell^\nu$, so the fixed-point congruence has a unique solution $\beta_{\ast,\ell} \bmod{\ell^{\nu}} \in (\Z/ \ell^{\nu} \Z)^{\times}$.
Subtracting the fixed-point relation from the recursion relation gives
\[
(\beta_{n+1}-\beta_{\ast,\ell})\equiv e(\beta_n-\beta_{\ast,\ell})\pmod{\ell^\nu},
\]
and iteration yields the stated formula.
\end{proof}

\begin{corollary}[No mixed local factors regime]\label{cor:no-mixed}
Assume that for every prime $\ell\mid m$, one has either $v_\ell(e-1)=0$ or $v_\ell(e-1)\ge \nu_\ell$.
Define
\[
m_{\mathrm{tr}}:=\prod_{v_\ell(e-1)\ge \nu_\ell}\ell^{\nu_\ell},
\qquad
m_{\mathrm{pow}}:=\prod_{v_\ell(e-1)=0}\ell^{\nu_\ell},
\]
Then $m=m_{\mathrm{tr}}m_{\mathrm{pow}}$, $\gcd(m_{\mathrm{tr}},m_{\mathrm{pow}})=1$, and there exists $\beta_{\ast} \bmod{ m_{\mathrm{pow}}  }\in (\Z/m_{\mathrm{pow}}\Z)^{\times}$ such that under the Chinese Remainder Theorem isomorphism
\[
\Z/m\Z\cong \Z/m_{\mathrm{tr}}\Z\times \Z/m_{\mathrm{pow}}\Z,
\]
one has
\[
\beta_n\longmapsto \big(n\bmod m_{\mathrm{tr}},\ \beta_{\ast}(1-e^n)\bmod m_{\mathrm{pow}}\big).
\]
\end{corollary}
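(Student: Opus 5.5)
The proof assembles the local descriptions of Lemma~\ref{lem:local-behavior} via the Chinese Remainder Theorem. First, the primes dividing $m$ are partitioned into two disjoint classes by the hypothesis. Each $\ell\mid m$ has either $v_\ell(e-1)\ge \nu_\ell$ or $v_\ell(e-1)=0$, and not both, since $\nu_\ell\ge 1$. Hence $m=m_{\mathrm{tr}}m_{\mathrm{pow}}$ with $\gcd(m_{\mathrm{tr}},m_{\mathrm{pow}})=1$, and CRT gives the stated isomorphism. So it suffices to identify $\beta_n \bmod m_{\mathrm{tr}}$ and $\beta_n\bmod m_{\mathrm{pow}}$ separately.

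For the translational factor, apply Lemma~\ref{lem:local-behavior}(a) at each $\ell\mid m_{\mathrm{tr}}$ with $b=\nu_\ell$. This is allowed since $a_\ell\ge\nu_\ell$, and it gives $\beta_n\equiv n\pmod{\ell^{\nu_\ell}}$. CRT over these primes then gives $\beta_n\equiv n\pmod{m_{\mathrm{tr}}}$. One could also argue directly: $e\equiv 1\pmod{m_{\mathrm{tr}}}$ because $e\equiv1$ modulo each $\ell^{\nu_\ell}$, so the recursion \eqref{eq:index-LCG} reduces to $\beta_{n+1}\equiv\beta_n+1$.

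For the power factor, argue globally rather than prime by prime. Since $\gcd(e-1,m_{\mathrm{pow}})=1$, the element $1-e$ is invertible modulo $m_{\mathrm{pow}}$. Define $\beta_\ast:=(1-e)^{-1}\bmod m_{\mathrm{pow}}$. This is a unit, and it is the unique solution of $\beta_\ast\equiv e\beta_\ast+1$. It reduces modulo each $\ell^{\nu_\ell}$ to the local fixed point $\beta_{\ast,\ell}$ of Lemma~\ref{lem:local-behavior}(b). Subtracting the fixed-point relation from the recursion gives $\beta_{n+1}-\beta_\ast\equiv e(\beta_n-\beta_\ast)$. Iterating from $\beta_0=0$ yields $\beta_n-\beta_\ast\equiv -\beta_\ast e^n$, that is, $\beta_n\equiv\beta_\ast(1-e^n)\pmod{m_{\mathrm{pow}}}$. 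As a check, this is just the identity $\beta_n=(e^n-1)/(e-1)$ read modulo $m_{\mathrm{pow}}$ with $(e-1)^{-1}=-\beta_\ast$.

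Combining the two components under CRT gives the claimed image of $\beta_n$. There is no real obstacle here. The only points needing care are that the two prime classes are disjoint, which uses $\nu_\ell\ge1$, and that the fixed point is a unit modulo $m_{\mathrm{pow}}$; both are immediate. The degenerate cases $m_{\mathrm{tr}}=1$ or $m_{\mathrm{pow}}=1$ are covered trivially.
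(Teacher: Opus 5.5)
Your proof is correct and follows essentially the same route as the paper: Lemma~\ref{lem:local-behavior}(a) on the translational primes and the fixed-point iteration on the power primes, assembled by the Chinese Remainder Theorem. The only cosmetic difference is that you solve the fixed-point congruence directly modulo $m_{\mathrm{pow}}$, taking $\beta_\ast=(1-e)^{-1}$, rather than gluing the local fixed points $\beta_{\ast,\ell}$ via CRT; the paper itself notes that the glued class is exactly this unique solution modulo $m_{\mathrm{pow}}$.
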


\begin{proof}
For every prime power $\ell^{\nu_{\ell}}$ dividing $m_{\mathrm{tr}}$, part (a) of Lemma~\ref{lem:local-behavior} gives $\beta_n\equiv n   \bmod{\ell^{\nu_{\ell}}}$.
For every prime power $\ell^{\nu_{\ell}}$ dividing $m_{\mathrm{pow}}$, part (b) gives a local fixed point $\beta_{\ast,\ell} \bmod{\ell^{\nu_{\ell}}} \in (\Z/\ell^{\nu_{\ell}} \Z)^{\times} $ and hence
\[
\beta_n\equiv \beta_{\ast,\ell}(1-e^n)\pmod{\ell^{\nu_\ell}}.
\]
The Chinese Remainder Theorem then yields unique $$\beta_{\ast}  \bmod{m_{\mathrm{pow}}} \in (\Z/m_{\mathrm{pow}}\Z)^{\times}$$ such that  $\beta_{\ast}  \equiv  \beta_{\ast,\ell} \bmod{\ell^{\nu_{\ell}}}$ for every prime $\ell$ dividing $m_{\mathrm{pow}}$, in other words $\beta_{\ast}  \bmod{m_{\mathrm{pow}}}$ is the unique solution of the fixed point congruence: 
\[
\beta_{\ast} =e \beta_{\ast} +1 \bmod{m_{\mathrm{pow}}}
\]
and the claimed formula follows.
\end{proof}

\begin{remark}\label{rem:generic-local}
The proofs of Lemma~\ref{lem:local-behavior} and Corollary~\ref{cor:no-mixed} depend only on the fact that the recursion is of the form:
\[
x_{n+1}\equiv a x_n+1\pmod M.
\]
Accordingly, the same arguments apply verbatim to any such recursion after replacing $(e,m,\{  \beta_n  \})$ by $(a,M,\{x_n\})$.
This observation applies in particular to the setting $(e^s,m_s,\{\gamma_n\})$, and we will use similar notations below with $(e^s,m_s,\{\gamma_n\})$ in place of $(e,m,\{\beta_n\})$.
\end{remark}

\begin{remark}
Lemma~\ref{lem:local-behavior} shows that the unresolved arithmetic is concentrated in the \emph{mixed} local prime powers, namely those with
\[
0<v_\ell(e-1)<\nu_\ell.
\]
If no such factors occur, the index sequence $\{ \beta_n \bmod{m} \}$ is explicitly described by Corollary~\ref{cor:no-mixed} as a Chinese Remainder Theorem combination of a translation component and a power-generator component.
\end{remark}

\begin{remark}[Conventions for trivial power components]\label{rem:trivial-power-component}
If $m_{\mathrm{pow}}=1$, then the pure power component is trivial.
In that case we set
\[
\tau:=1,\qquad \mathcal{G}:=\{0\}\subset \Z/1\Z.
\]
Similarly, if $m_{s,\mathrm{pow}}=1$, we set
\[
\tau_s:=1,\qquad \mathcal{G}_s:=\{0\}\subset \Z/1\Z.
\]
With these conventions, all subsequent formulas remain valid without further case distinctions.
\end{remark}

\begin{theorem}\label{thm:no-mixed-improvement}
Assume the hypotheses of Corollary~\ref{cor:no-mixed}.
If $m_{\mathrm{pow}}=1$, set
\[
\tau:=1,\qquad \mathcal{G}:=\{0\}\subset \Z/1\Z.
\]
If $m_{\mathrm{pow}}>1$, set $\tau$ to be equal to the order of $e \bmod{m_{\mathrm{pow}}} $ in the multiplicative group $(\Z/ m_{\mathrm{pow}} \Z)^{\times}$ (recall by Lemma~\ref{lem:gcd-em} we have that $e$ is relatively prime to $m$, hence relatively prime to $m_{\mathrm{pow}}$), and $\mathcal{G} \subset \Z/m_{\mathrm{pow}}\Z$ the subset:
\[
\mathcal{G}:=\{\beta_{\ast}(1-e^v)\bmod m_{\mathrm{pow}}:0\le v<\tau\}\subset \Z/m_{\mathrm{pow}}\Z.
\]
(so $|\mathcal{G}|=\tau$). Assume in addition that
\[
\gcd(m_{\mathrm{tr}},\tau)=1.
\]
Then the following hold:

\begin{enumerate}[label=(\roman*),leftmargin=*]
\item We have $t= m_{\mathrm{tr}}\tau$.

\item Under the Chinese Remainder Theorem isomorphism:
\[
\Z/m\Z\cong \Z/m_{\mathrm{tr}}\Z\times \Z/m_{\mathrm{pow}}\Z,
\]
the cycle $B$ of the LCG $\{\beta_n \bmod{m}\}_{n \geq 0}$:
\[
B=\{\beta_0,\dots,\beta_{t-1} \bmod{m}\}
\]
is exactly the Cartesian product
\[
B=\Z/m_{\mathrm{tr}}\Z\times \mathcal{G}.
\]
\item The normalized Fourier $\ell^1$ mass satisfies
\[
\mathcal L_m(B)=\mathcal L_{m_{\mathrm{pow}}}(\mathcal{G}).
\]
Consequently,
\[
\mathcal L_m(B_{\mathrm{adm}})\le \mathcal L_{m_{\mathrm{pow}}}(\mathcal{G})+1,
\qquad
\mathcal L_m(B_{s,\mathrm{adm}})\le \mathcal L_{m_{\mathrm{pow}}}(\mathcal{G})+s^{1/2}.
\]
\end{enumerate}
\end{theorem}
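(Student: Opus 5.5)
Everything follows from the explicit CRT description in Corollary~\ref{cor:no-mixed}: under $\Z/m\Z\cong \Z/m_{\mathrm{tr}}\Z\times \Z/m_{\mathrm{pow}}\Z$ we have $\beta_n\mapsto (n\bmod m_{\mathrm{tr}},\ \beta_\ast(1-e^n)\bmod m_{\mathrm{pow}})$. I will compute the period and the cycle first, and then factor the Fourier transform along the CRT splitting.

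\emph{Period.} The first component $n\bmod m_{\mathrm{tr}}$ has exact period $m_{\mathrm{tr}}$. For the second component $\beta_\ast(1-e^n)\bmod m_{\mathrm{pow}}$, note that $\beta_\ast$ is a unit. Hence $\beta_\ast(1-e^n)\equiv\beta_\ast(1-e^{n'})$ if and only if $e^n\equiv e^{n'}\pmod{m_{\mathrm{pow}}}$. So the exact period is $\tau$, and the values $\beta_\ast(1-e^v)$ for $0\le v<\tau$ are pairwise distinct, giving $|\mathcal G|=\tau$. The trivial case $m_{\mathrm{pow}}=1$ is covered by the convention $\tau=1$. The period of a pair of periodic sequences is the lcm of the component periods, so $t=\mathrm{lcm}(m_{\mathrm{tr}},\tau)=m_{\mathrm{tr}}\tau$ by the coprimality hypothesis. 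This proves (i).

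\emph{Cycle.} By construction $B\subseteq \Z/m_{\mathrm{tr}}\Z\times\mathcal G$. For the reverse inclusion, take $(c,\beta_\ast(1-e^v))$. Since $\gcd(m_{\mathrm{tr}},\tau)=1$, the Chinese Remainder Theorem gives $n$ with $0\le n<m_{\mathrm{tr}}\tau=t$ such that $n\equiv c\pmod{m_{\mathrm{tr}}}$ and $n\equiv v\pmod\tau$. Then $\beta_n$ maps to $(c,\beta_\ast(1-e^v))$, because $e^n\equiv e^v\pmod{m_{\mathrm{pow}}}$. Alternatively, both sets have cardinality $m_{\mathrm{tr}}\tau=t=|B|$, so the inclusion is an equality. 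This proves (ii).

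\emph{Fourier mass.} For a product set $A_1\times A_2$ with $M=M_1M_2$ and $\gcd(M_1,M_2)=1$, the characters of $\Z/M\Z$ correspond under CRT to pairs of characters $(j_1,j_2)\in \Z/M_1\Z\times\Z/M_2\Z$. Concretely, $e^{-2\pi i ju/M}$ factors as a product of a character of $\Z/M_1\Z$ evaluated at $u\bmod M_1$ and a character of $\Z/M_2\Z$ evaluated at $u\bmod M_2$. As $j$ runs over $\Z/M\Z$, this correspondence is a bijection onto all pairs. Hence $\widehat{\mathbf 1_{A_1\times A_2}}^{(M)}(j)=\widehat{\mathbf 1_{A_1}}^{(M_1)}(j_1)\,\widehat{\mathbf 1_{A_2}}^{(M_2)}(j_2)$, up to a relabelling of $j_1,j_2$ by units, which does not change the sums over all $j_1,j_2$. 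It follows that $\mathcal L_M(A_1\times A_2)=\mathcal L_{M_1}(A_1)\mathcal L_{M_2}(A_2)$.

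For $A_1=\Z/m_{\mathrm{tr}}\Z$, the transform equals $m_{\mathrm{tr}}$ at $j_1=0$ and $0$ otherwise, so $\mathcal L_{m_{\mathrm{tr}}}(A_1)=1$. Therefore $\mathcal L_m(B)=\mathcal L_{m_{\mathrm{pow}}}(\mathcal G)$. The two consequent inequalities then follow from Lemma~\ref{lem:L1-remove}. The only step needing care is this character factorization under CRT, namely checking that the relabelling is a bijection so the $\ell^1$ sums factor exactly. The rest is bookkeeping.
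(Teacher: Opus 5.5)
Your proposal is correct and follows the paper's proof essentially step by step: the CRT description from Corollary~\ref{cor:no-mixed}, exact periods $m_{\mathrm{tr}}$ and $\tau$ combined via $\operatorname{lcm}$ (using that $\beta_\ast$ is a unit), a CRT surjectivity argument for the cycle, and factorization of the Fourier transform along $\Z/m_{\mathrm{tr}}\Z\times\Z/m_{\mathrm{pow}}\Z$ followed by Lemma~\ref{lem:L1-remove}. Your explicit check that the character relabelling under CRT is a bijection, which gives $\mathcal L_M(A_1\times A_2)=\mathcal L_{M_1}(A_1)\,\mathcal L_{M_2}(A_2)$, spells out the step the paper compresses into its remark that the normalized Fourier $\ell^1$ mass is invariant under additive-group isomorphisms.
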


\begin{proof}
If $m_{\mathrm{pow}}=1$, then $m_{\mathrm{tr}}=m$, $\tau=1$, and $\mathcal{G}=\{0\}$.
Corollary~\ref{cor:no-mixed} gives $\beta_n\equiv n\pmod m$, so the period is $t=m=m_{\mathrm{tr}}\tau$, the cycle is $B=\Z/m\Z=\Z/m_{\mathrm{tr}}\Z\times \mathcal{G}$, and $\mathcal L_m(B)=1=\mathcal L_1(\mathcal{G})$.
The remaining claims are immediate from Lemma~\ref{lem:L1-remove}.
Thus assume $m_{\mathrm{pow}}>1$. By Corollary~\ref{cor:no-mixed}, under the Chinese Remainder Theorem isomorphism one has:
\[
\beta_n \bmod{m} \longmapsto \big(n\bmod m_{\mathrm{tr}},\ \beta_{\ast}(1-e^n)\bmod m_{\mathrm{pow}}\big).
\]

The first coordinate has period $m_{\mathrm{tr}}$.
The second coordinate has period $\tau$: indeed, if
\[
\beta_{\ast}(1-e^{n+r})\equiv \beta_{\ast}(1-e^n)\pmod{m_{\mathrm{pow}}},
\]
then
\[
\beta_{\ast}e^n(e^r-1)\equiv 0\pmod{m_{\mathrm{pow}}}.
\]
Therefore as both $e$ and $\beta_{\ast}$ are invertible modulo $m_{\mathrm{pow}}$ we have:
\[
e^r\equiv 1\pmod{m_{\mathrm{pow}}},
\]
so $\tau\mid r$.
Hence the second coordinate has exact period $\tau$. Thus as $t$ is the period of  $\{\beta_n \bmod{m} \}_{n \geq 0} $ it follows that:

\[
t=\operatorname{lcm}(m_{\mathrm{tr}},\tau)=m_{\mathrm{tr}}\tau,
\]
because $\gcd(m_{\mathrm{tr}},\tau)=1$.
This proves (i).

\bigskip
To prove (ii), let $(u,v)\in \Z/m_{\mathrm{tr}}\Z\times \{0,\dots,\tau-1\}$.
By the Chinese Remainder Theorem (using the assumption that $m_{\mathrm{tr}}$ and $\tau$ are relatively prime) there exists a unique residue class $n\bmod t$ such that
\[
n\equiv u\pmod{m_{\mathrm{tr}}},
\qquad
n\equiv v\pmod{\tau}.
\]
Since $e^n\equiv e^v\pmod{m_{\mathrm{pow}}}$, Corollary~\ref{cor:no-mixed} gives
\[
\beta_n \bmod{m} \longmapsto \big(u \bmod{m_{\mathrm{tr}}},\ \beta_{\ast}(1-e^v)  \bmod{m_{\mathrm{pow}}}\big).
\]
Thus every element of $\Z/m_{\mathrm{tr}}\Z\times \mathcal{G}$ occurs in $B$.
Conversely, we already know that every $\beta_n \bmod{m}$ has this form under the Chinese Remainder Theorem isomorphism, thus proving
\[
B=\Z/m_{\mathrm{tr}}\Z\times \mathcal{G}.
\]

For (iii), observe first that the normalized Fourier $\ell^1$ mass is invariant under additive-group isomorphisms.
Hence one may compute $\mathcal L_m(B)$ on the product group
\[
\Z/m_{\mathrm{tr}}\Z\times \Z/m_{\mathrm{pow}}\Z
\]
using product characters
\[
\chi_{a,b}(x,y):=\exp\!\Big(-2\pi i\frac{ax}{m_{\mathrm{tr}}}\Big)\exp\!\Big(-2\pi i\frac{by}{m_{\mathrm{pow}}}\Big),
\]
with $0\le a<m_{\mathrm{tr}}$ and $0\le b<m_{\mathrm{pow}}$.
Since $B=\Z/m_{\mathrm{tr}}\Z\times \mathcal{G}$, one has
\[
\widehat{\mathbf 1_B}^{(m)}(a,b)
=
\sum_{x\in \Z/m_{\mathrm{tr}}\Z} e^{-2\pi i ax/m_{\mathrm{tr}}}
\sum_{y\in \mathcal{G}} e^{-2\pi i by/m_{\mathrm{pow}}}.
\]
The first factor vanishes unless $a=0$, in which case it equals $m_{\mathrm{tr}}$.
Therefore
\[
\sum_{a=0}^{m_{\mathrm{tr}}-1}\sum_{b=0}^{m_{\mathrm{pow}}-1}
\big|\widehat{\mathbf 1_B}^{(m)}(a,b)\big|
=
m_{\mathrm{tr}}
\sum_{b=0}^{m_{\mathrm{pow}}-1}\big|\widehat{\mathbf 1_{\mathcal{G}}}^{(m_{\mathrm{pow}})}(b)\big|.
\]
Dividing by $m=m_{\mathrm{tr}}m_{\mathrm{pow}}$ yields
\[
\mathcal L_m(B)=\mathcal L_{m_{\mathrm{pow}}}(\mathcal{G}).
\]
The final inequalities now follow from Lemma~\ref{lem:L1-remove}.
\end{proof}

\begin{corollary}[Improved discrepancy bounds in the no-mixed regime]\label{cor:no-mixed-discrepancy}
Under the hypotheses of Theorem~\ref{thm:no-mixed-improvement},
\begin{align}
D(\mathcal P)
&\le
1-\Big(1-\frac{1}{p^a}\Big)^{2r}
+\frac{4q^{1/2}}{N^{\ast}}\big(\mathcal L_{m_{\mathrm{pow}}}(\mathcal{G})+1\big)\big(C_0\ln(p^a)+1\big)^{2r},\label{eq:no-mixed-D}\\[0.5em]
D(\mathcal P^{(s)})
&\le
1-\Big(1-\frac{1}{p^a}\Big)^{2rs}
+\frac{6q^{1/2}}{N_s^{\ast}}\Big(\sum_{\iota=0}^{s-1}e^{2\iota}\Big)\big(\mathcal L_{m_{\mathrm{pow}}}(\mathcal{G})+s^{1/2}\big)\big(C_0\ln(p^a)+1\big)^{2rs}.\label{eq:no-mixed-Ds}
\end{align}
In particular, since $\mathcal L_{m_{\mathrm{pow}}}(\mathcal{G})\le |\mathcal{G}|^{1/2} = \tau^{1/2}$ by Lemma~\ref{lem:L1-sqrt}, we have:
\begin{align}
D(\mathcal P)
&\le
1-\Big(1-\frac{1}{p^a}\Big)^{2r}
+\frac{4q^{1/2}}{N^{\ast}}\big(\tau^{1/2}+1\big)\big(C_0\ln(p^a)+1\big)^{2r},\label{eq:no-mixed-D-rough}\\[0.5em]
D(\mathcal P^{(s)})
&\le
1-\Big(1-\frac{1}{p^a}\Big)^{2rs}
+\frac{6q^{1/2}}{N_s^{\ast}}\Big(\sum_{\iota=0}^{s-1}e^{2 \iota}\Big)\big(\tau^{1/2} +s^{1/2} \big)\big(C_0\ln(p^a)+1\big)^{2rs}.\label{eq:no-mixed-Ds-rough}
\end{align}
\end{corollary}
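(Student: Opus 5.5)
The plan is to substitute the output of Theorem~\ref{thm:no-mixed-improvement}(iii) into the general sub-period bounds of Section~4. No new analytic input should be needed; the corollary is a chaining of results already proved.

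For the one-point bound \eqref{eq:no-mixed-D}, I start from \eqref{eq:CaseC-D-L1} of Theorem~\ref{thm:CaseC-D-L1}. It bounds $D(\mathcal P)$ by the discretization term plus $\frac{4q^{1/2}}{N^\ast}\mathcal L_m(B_{\mathrm{adm}})(C_0\ln(p^a)+1)^{2r}$. This requires $p\ge 5$, which I take to be part of the standing hypotheses inherited through the theorems being combined. Theorem~\ref{thm:no-mixed-improvement}(iii) gives $\mathcal L_m(B_{\mathrm{adm}})\le \mathcal L_{m_{\mathrm{pow}}}(\mathcal G)+1$. Since the coefficient in front of $\mathcal L_m(B_{\mathrm{adm}})$ is nonnegative, substituting yields \eqref{eq:no-mixed-D}.

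For the serial bound \eqref{eq:no-mixed-Ds}, I argue the same way from \eqref{eq:CaseC-Ds-L1} of Theorem~\ref{thm:CaseC-Ds-L1}, under its hypotheses $|e|\ge 2$, $\gcd(e,p)=1$, $p\ge 5$. Here I insert $\mathcal L_m(B_{s,\mathrm{adm}})\le \mathcal L_{m_{\mathrm{pow}}}(\mathcal G)+s^{1/2}$, again from Theorem~\ref{thm:no-mixed-improvement}(iii).

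For the rough forms \eqref{eq:no-mixed-D-rough} and \eqref{eq:no-mixed-Ds-rough}, I apply Lemma~\ref{lem:L1-sqrt} with $M=m_{\mathrm{pow}}$ and $A=\mathcal G$. This gives $\mathcal L_{m_{\mathrm{pow}}}(\mathcal G)\le|\mathcal G|^{1/2}$. I need $|\mathcal G|=\tau$, which holds because the elements $\beta_\ast(1-e^v)$ for $0\le v<\tau$ are pairwise distinct modulo $m_{\mathrm{pow}}$. Distinctness follows from the invertibility of $\beta_\ast$ and $e$, as shown in the proof of Theorem~\ref{thm:no-mixed-improvement}. In the trivial case $m_{\mathrm{pow}}=1$, the convention of Remark~\ref{rem:trivial-power-component} gives $\mathcal L_1(\{0\})=1=\tau^{1/2}$, so the bound still holds. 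Monotonicity in the Fourier mass then gives the last two displays.

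The only point needing care is that the hypotheses of the Case C theorems are in force. Those theorems require $p\ge5$, and for the serial case also $|e|\ge2$ and $\gcd(e,p)=1$. These should be stated as carried over, since the corollary only says "under the hypotheses of Theorem~\ref{thm:no-mixed-improvement}." Beyond this bookkeeping, I expect no obstacle.
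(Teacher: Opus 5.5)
Your proposal is correct and matches the paper's proof: the paper combines Theorem~\ref{thm:no-mixed-improvement}(iii) with Theorems~\ref{thm:CaseC-D-L1} and~\ref{thm:CaseC-Ds-L1}, and the rough forms come from Lemma~\ref{lem:L1-sqrt} with $|\mathcal G|=\tau$. You are also right that the hypotheses $p\ge 5$ (and, for the serial bound, $|e|\ge 2$, $\gcd(e,p)=1$, $2\le s<t$) are tacitly inherited from the Section~4 theorems, a point the paper's statement leaves implicit.
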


\begin{proof}
Combine Theorem~\ref{thm:no-mixed-improvement} with Theorem~\ref{thm:CaseC-D-L1}, Theorem~\ref{thm:CaseC-Ds-L1}.
\end{proof}

\begin{theorem}\label{thm:no-mixed-improvement-derived}
Fix $s\ge 2$ and as before: 
\[
R_s:=P_s-P_0 =[\beta_s]R,
\]
\[
m_s:=\ord(R_s),\qquad \widetilde{B}_s:=\{\gamma_0,\dots,\gamma_{L_s-1} \bmod{m_s}\}\subset \Z/m_s\Z,
\]
where
\[
\gamma_{n+1}\equiv e^s\gamma_n+1\pmod{m_s},\qquad \gamma_0\equiv 0,
\qquad L_s:=t/\gcd(t,s).
\]
Assume that for every prime $\ell\mid m_s$, one has either $v_\ell(e^s-1)=0$ or $v_\ell(e^s-1)\ge \nu_{\ell,s}$, where
\[
m_s=\prod_\ell \ell^{\nu_{\ell,s}}.
\]
Define
\[
m_{s,\mathrm{tr}}:=\prod_{v_\ell(e^s-1)\ge \nu_{\ell,s}}\ell^{\nu_{\ell,s}},
\qquad
m_{s,\mathrm{pow}}:=\prod_{v_\ell(e^s-1)=0}\ell^{\nu_{\ell,s}},
\]
so $m_s=m_{s,\mathrm{tr}}m_{s,\mathrm{pow}}$ and $\gcd(m_{s,\mathrm{tr}},m_{s,\mathrm{pow}})=1$.

\bigskip
If $m_{s,\mathrm{pow}}=1$, set
\[
\tau_s:=1,\qquad \mathcal{G}_s:=\{0\}\subset \Z/1\Z.
\]

If $m_{s,\mathrm{pow}}>1$, let $\gamma_{\ast,s} \bmod{m_{s,\mathrm{pow}}} \in (\Z/m_{s,\mathrm{pow}}\Z)^{\times}$ be the unique solution of
\[
\gamma_{\ast,s}\equiv e^s\gamma_{\ast,s}+1\pmod{m_{s,\mathrm{pow}}},
\]

Set $\tau_s$ to be equal to the order of $e^s \bmod{ m_{s,\mathrm{pow}} }$ in the multiplicative group   $ (\Z/ m_{s,\mathrm{pow}} \Z)^{\times}$, and
\[
\mathcal{G}_s:=\{\gamma_{\ast,s}(1-(e^s)^v)\bmod m_{s,\mathrm{pow}}:0\le v<\tau_s\}.
\]
(and so $|\mathcal{G}_s|=\tau_s$). Assume moreover that
\[
\gcd(m_{s,\mathrm{tr}},\tau_s)=1.
\]
Then the following hold:
\begin{enumerate}[label=(\roman*),leftmargin=*]
\item We have $L_s =m_{s,\mathrm{tr}}\tau_s$.

\item Under the Chinese Remainder Theorem isomorphism:
\[
\Z/m_s\Z\cong \Z/m_{s,\mathrm{tr}}\Z\times \Z/m_{s,\mathrm{pow}}\Z,
\]
we have
\[
\widetilde{B}_s=\Z/m_{s,\mathrm{tr}}\Z\times \mathcal{G}_s.
\]
\item The normalized Fourier $\ell^1$ mass satisfies
\[
\mathcal L_{m_s}(\widetilde{B}_s)=\mathcal L_{m_{s,\mathrm{pow}}}(\mathcal{G}_s).
\]
Consequently,
\[
\mathcal L_{m_s}(\widetilde B_{s,\mathrm{adm}})
\le \mathcal L_{m_{s,\mathrm{pow}}}(\mathcal{G}_s)+\big( s/\gcd(s,t) \big)^{1/2}.
\]
\end{enumerate}
\end{theorem}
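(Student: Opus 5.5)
The plan is to repeat the proof of Theorem~\ref{thm:no-mixed-improvement} verbatim with $(e,m,\{\beta_n\},t)$ replaced by $(e^s,m_s,\{\gamma_n\},L_s)$, as anticipated in Remark~\ref{rem:generic-local}. First I apply Lemma~\ref{lem:local-behavior} and Corollary~\ref{cor:no-mixed} to the recursion $\gamma_{n+1}\equiv e^s\gamma_n+1 \pmod{m_s}$, $\gamma_0\equiv 0$. This is legitimate because their proofs use only the shape $x_{n+1}\equiv ax_n+1$. Under the hypothesis that every $\ell\mid m_s$ has $v_\ell(e^s-1)\in\{0\}\cup[\nu_{\ell,s},\infty)$, the Chinese Remainder Theorem gives
\[
\gamma_n \bmod m_s \longmapsto \big(n \bmod m_{s,\mathrm{tr}},\ \gamma_{\ast,s}(1-e^{sn}) \bmod m_{s,\mathrm{pow}}\big).
\]
Here $\gamma_{\ast,s}$ is a unit modulo $m_{s,\mathrm{pow}}$. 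Note also that $e^s$ is invertible modulo $m_{s,\mathrm{pow}}$, since $m_s\mid m$ and $\gcd(e,m)=1$ by Lemma~\ref{lem:gcd-em}. The trivial case $m_{s,\mathrm{pow}}=1$ is handled by the conventions of Remark~\ref{rem:trivial-power-component}: then $\gamma_n\equiv n$, so $\widetilde B_s=\Z/m_s\Z$ and both masses equal $1$.

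For (i), the first coordinate has exact period $m_{s,\mathrm{tr}}$. For the second coordinate, suppose $\gamma_{\ast,s}(1-e^{s(n+r)})\equiv\gamma_{\ast,s}(1-e^{sn}) \pmod{m_{s,\mathrm{pow}}}$. Invertibility of $\gamma_{\ast,s}$ and $e^{sn}$ then forces $(e^s)^r\equiv 1$, so the exact period is $\tau_s$. Hence the period of $\{\gamma_n \bmod m_s\}$ is $\operatorname{lcm}(m_{s,\mathrm{tr}},\tau_s)=m_{s,\mathrm{tr}}\tau_s$, using $\gcd(m_{s,\mathrm{tr}},\tau_s)=1$.

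One point needs care and I expect it to be the only real obstacle: identifying this period with $L_s=t/\gcd(s,t)$. Here I invoke the fact recalled in Section~4.2 (derived from $P_{ns}=P_0+[\gamma_n]R_s$ with $R_s=[\beta_s]R$) that $L_s$, the period of $\{P_{ns}\}$, equals the period of the LCG $\{\gamma_n \bmod m_s\}$. This works because $u\mapsto P_0+[u]R_s$ is injective on $\Z/m_s\Z$. So $L_s=m_{s,\mathrm{tr}}\tau_s$.

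For (ii), I argue as in Theorem~\ref{thm:no-mixed-improvement}. Given $(u,v)\in\Z/m_{s,\mathrm{tr}}\Z\times\{0,\dots,\tau_s-1\}$, choose by CRT an $n \bmod L_s$ with $n\equiv u \pmod{m_{s,\mathrm{tr}}}$ and $n\equiv v \pmod{\tau_s}$. Then $e^{sn}\equiv e^{sv}\pmod{m_{s,\mathrm{pow}}}$, so $\gamma_n$ maps to $(u,\gamma_{\ast,s}(1-(e^s)^v))$. This shows $\Z/m_{s,\mathrm{tr}}\Z\times\mathcal G_s\subseteq\widetilde B_s$, and the displayed formula gives the reverse inclusion.

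For (iii), I use that the Fourier $\ell^1$ mass is invariant under group isomorphism, and compute with product characters on $\Z/m_{s,\mathrm{tr}}\Z\times\Z/m_{s,\mathrm{pow}}\Z$. The transform of $\mathbf 1_{\widetilde B_s}$ factors as a full character sum over $\Z/m_{s,\mathrm{tr}}\Z$ times $\widehat{\mathbf 1_{\mathcal G_s}}^{(m_{s,\mathrm{pow}})}(b)$. The first factor equals $m_{s,\mathrm{tr}}\delta_{a,0}$. Summing and dividing by $m_s=m_{s,\mathrm{tr}}m_{s,\mathrm{pow}}$ gives $\mathcal L_{m_s}(\widetilde B_s)=\mathcal L_{m_{s,\mathrm{pow}}}(\mathcal G_s)$. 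The final inequality then follows directly from Lemma~\ref{lem:L1-remove}.
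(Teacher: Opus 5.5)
Your proposal is correct and follows the paper's proof essentially step for step. The steps match: the CRT decomposition from rerunning Lemma~\ref{lem:local-behavior} and Corollary~\ref{cor:no-mixed} for $(e^s,m_s,\{\gamma_n\})$; the exact periods $m_{s,\mathrm{tr}}$ and $\tau_s$ of the two coordinates, combined via $\gcd(m_{s,\mathrm{tr}},\tau_s)=1$; the CRT surjectivity argument for (ii); and the product-character factorization plus Lemma~\ref{lem:L1-remove} for (iii). Your extra checks, that $e^s$ is a unit modulo $m_{s,\mathrm{pow}}$ (via $m_s\mid m$ and Lemma~\ref{lem:gcd-em}) and that $L_s$ equals the period of $\{\gamma_n \bmod m_s\}$ (via injectivity of $u\mapsto P_0+[u]R_s$), only make explicit two points the paper takes for granted.
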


\begin{proof}
If $m_{s,\mathrm{pow}}=1$, then $\tau_s=1$ and $\mathcal{G}_s=\{0\}$.
In this case the no-mixed hypothesis forces $m_s=m_{s,\mathrm{tr}}$, and the same analysis as in the proof of Lemma~\ref{lem:local-behavior} and Corollary~\ref{cor:no-mixed}, applied to the recursion:
\[
\gamma_{n+1}\equiv e^s\gamma_n+1\pmod{m_s}
\]
shows that
\[
\gamma_n\equiv n\pmod{m_s}.
\]
Hence as $L_s$ is the period of $\{\gamma_n \bmod{m_s}\}_{n \geq 0}$, we have
\[
L_s=m_s=m_{s,\mathrm{tr}}\tau_s,
\]
and
\[
\widetilde{B}_s=\Z/m_s\Z=\Z/m_{s,\mathrm{tr}}\Z\times \mathcal{G}_s,
\]
and $\mathcal L_{m_s}(\widetilde{B}_s)=1=\mathcal L_1(\mathcal{G}_s)$.
The remaining claim follows from Lemma~\ref{lem:L1-remove}.
Thus assume $m_{s,\mathrm{pow}}>1$.
Every prime divisor of $m_{s,\mathrm{pow}}$ satisfies $v_\ell(e^s-1)=0$, so the element $1-e^s$ is invertible modulo $m_{s,\mathrm{pow}}$; hence the fixed-point congruence
\[
\gamma_{\ast,s}\equiv e^s\gamma_{\ast,s}+1\pmod{m_{s,\mathrm{pow}}}
\]
has a unique solution in $(\Z/ m_{s,\mathrm{pow}}   \Z)^{\times}$.
The same analysis as in the proof of Lemma~\ref{lem:local-behavior} and Corollary~\ref{cor:no-mixed} then yields the decomposition
\[
\gamma_n  \bmod{m_s} \longmapsto \big(n\bmod m_{s,\mathrm{tr}},\ \gamma_{\ast,s}(1-(e^s)^n)\bmod m_{s,\mathrm{pow}}\big)
\]
under the Chinese Remainder Theorem isomorphism.
Exactly as in the proof of Theorem~\ref{thm:no-mixed-improvement}, the first coordinate has period $m_{s,\mathrm{tr}}$, the second has exact period $\tau_s$, and the hypothesis that $m_{s,\mathrm{tr}}$ and $\tau_s$ are co-prime implies that:
\[
L_s=\operatorname{lcm}(m_{s,\mathrm{tr}},\tau_s)=m_{s,\mathrm{tr}}\tau_s.
\]
This proves (i). Again by using the hypothesis that $m_{s,\mathrm{tr}}$ and $\tau_s$ are co-prime, the same Chinese Remainder Theorem argument as in the proof of Theorem~\ref{thm:no-mixed-improvement} gives the proof of (ii), namely that: 
\[
\widetilde{B}_s=\Z/m_{s,\mathrm{tr}}\Z\times \mathcal{G}_s.
\]

\bigskip

For (iii), compute the Fourier transform of $\mathbf 1_{\widetilde{B}_s}$ on the product group
\[
\Z/m_{s,\mathrm{tr}}\Z\times \Z/m_{s,\mathrm{pow}}\Z.
\]
As in the proof of Theorem~\ref{thm:no-mixed-improvement}, the full translation factor forces vanishing of all nontrivial frequencies in the first coordinate, and one obtains
\[
\mathcal L_{m_s}(\widetilde{B}_s)=\mathcal L_{m_{s,\mathrm{pow}}}(\mathcal{G}_s).
\]
Finally, Lemma~\ref{lem:L1-remove} gives
\[
\mathcal L_{m_s}(\widetilde B_{s,\mathrm{adm}})
\le \mathcal L_{m_s}(\widetilde{B}_s)+(s/\gcd(s,t))^{1/2}
=
\mathcal L_{m_{s,\mathrm{pow}}}(\mathcal{G}_s)+(s/\gcd(s,t))^{1/2}.
\]
\end{proof}

\begin{corollary}[Improved non-overlapping discrepancy bounds in the derived no-mixed regime]\label{cor:no-mixed-Dtilde}
Under the hypotheses of Theorem~\ref{thm:no-mixed-improvement-derived},
\begin{align}
D(\widetilde{\mathcal P}^{(s)})
&\le
1-\Big(1-\frac{1}{p^a}\Big)^{2rs}
+\frac{6q^{1/2}}{\widetilde N_s^{\ast}}\Big(\sum_{\iota=0}^{s-1}e^{2\iota}\Big)\big(\mathcal L_{m_{s,\mathrm{pow}}}(\mathcal{G}_s)+  \big( s/\gcd(s,t)\big)^{1/2}  \big)\big(C_0\ln(p^a)+1\big)^{2rs}\label{eq:no-mixed-Dtilde} 
\end{align}
and in particular since $\mathcal L_{m_{s,\mathrm{pow}}} (\mathcal{G}_s)\leq |\mathcal{G}_s|^{1/2} = \tau_s^{1/2}$ by Lemma~\ref{lem:L1-sqrt}, we have:
\begin{align}
D(\widetilde{\mathcal P}^{(s)})
&\le
1-\Big(1-\frac{1}{p^a}\Big)^{2rs}
+\frac{6q^{1/2}}{\widetilde N_s^{\ast}}\Big(\sum_{\iota=0}^{s-1}e^{2 \iota}\Big)\big(\tau_s^{1/2} +\big( s/\gcd(s,t)\big)^{1/2} \big)\big(C_0\ln(p^a)+1\big)^{2rs}\label{eq:no-mixed-Dtilde-rough}
\end{align}
\end{corollary}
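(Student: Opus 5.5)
The plan is to combine Theorem~\ref{thm:no-mixed-improvement-derived} with Theorem~\ref{thm:CaseC-Dtilde-L1}, in exactly the way Corollary~\ref{cor:no-mixed-discrepancy} combines Theorem~\ref{thm:no-mixed-improvement} with Theorems~\ref{thm:CaseC-D-L1} and~\ref{thm:CaseC-Ds-L1}.

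First I check the hypotheses. Theorem~\ref{thm:CaseC-Dtilde-L1} requires $|e|\ge 2$, $\gcd(e,p)=1$ and $p\ge 5$. These are the standing assumptions under which the non-overlapping collection is treated, and I take them to be included in ``the hypotheses'' here, since Theorem~\ref{thm:no-mixed-improvement-derived} itself is purely arithmetic. Under these assumptions, \eqref{eq:CaseC-Dtilde-L1} bounds $D(\widetilde{\mathcal P}^{(s)})$ by the discretization term plus
\[
\frac{6q^{1/2}}{\widetilde N_s^{\ast}}\Big(\sum_{\iota=0}^{s-1}e^{2\iota}\Big)\,\mathcal L_{m_s}(\widetilde B_{s,\mathrm{adm}})\,\big(C_0\ln(p^a)+1\big)^{2rs}.
\]

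Next, part (iii) of Theorem~\ref{thm:no-mixed-improvement-derived} gives
\[
\mathcal L_{m_s}(\widetilde B_{s,\mathrm{adm}})\le \mathcal L_{m_{s,\mathrm{pow}}}(\mathcal G_s)+(s/\gcd(s,t))^{1/2}.
\]
Every other factor in the bound is nonnegative, so substituting this inequality yields \eqref{eq:no-mixed-Dtilde}.

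For \eqref{eq:no-mixed-Dtilde-rough}, I apply Lemma~\ref{lem:L1-sqrt} with $M=m_{s,\mathrm{pow}}$ and $A=\mathcal G_s$, which gives $\mathcal L_{m_{s,\mathrm{pow}}}(\mathcal G_s)\le |\mathcal G_s|^{1/2}$. It remains to check $|\mathcal G_s|=\tau_s$, i.e.\ that the listed elements of $\mathcal G_s$ are distinct. When $m_{s,\mathrm{pow}}>1$, $\gamma_{\ast,s}(1-e^{sv})\equiv\gamma_{\ast,s}(1-e^{sv'})$ forces $e^{sv}\equiv e^{sv'}$, because $\gamma_{\ast,s}$ is a unit. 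This gives $v\equiv v'\pmod{\tau_s}$ by the definition of $\tau_s$. When $m_{s,\mathrm{pow}}=1$, the convention $\tau_s=1$, $\mathcal G_s=\{0\}$ settles it. Substituting gives the rough bound.

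None of these steps is a real obstacle: the only points needing care are the hypothesis bookkeeping (that $|e|\ge2$, $\gcd(e,p)=1$, $p\ge5$ are in force) and the cardinality $|\mathcal G_s|=\tau_s$ just checked.
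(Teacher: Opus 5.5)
Your proposal is correct and matches the paper's proof, which likewise substitutes the bound $\mathcal L_{m_s}(\widetilde B_{s,\mathrm{adm}})\le \mathcal L_{m_{s,\mathrm{pow}}}(\mathcal G_s)+(s/\gcd(s,t))^{1/2}$ from part (iii) of Theorem~\ref{thm:no-mixed-improvement-derived} into Theorem~\ref{thm:CaseC-Dtilde-L1}, and then applies Lemma~\ref{lem:L1-sqrt} to get the rough bound. You also add two accurate points that the paper leaves implicit: the distinctness check giving $|\mathcal G_s|=\tau_s$, and the fact that $|e|\ge 2$, $\gcd(e,p)=1$, $p\ge 5$ must be tacitly assumed.
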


\begin{proof}
Combine Theorem~\ref{thm:CaseC-Dtilde-L1}, Theorem~\ref{thm:no-mixed-improvement-derived}.
\end{proof}

\begin{theorem}[Pure power representation of the Fourier masses in the no-mixed regime]\label{thm:power-representation}
Under the hypotheses of Theorem~\ref{thm:no-mixed-improvement}, one has
\begin{equation}\label{eq:power-representation-G}
\mathcal L_{m_{\mathrm{pow}}}(\mathcal{G})
=
\frac{1}{m_{\mathrm{pow}}}
\sum_{b=0}^{m_{\mathrm{pow}}-1}
\left|
\sum_{v=0}^{\tau-1}\exp\!\Big(\frac{2\pi i\, b e^v}{m_{\mathrm{pow}}}\Big)
\right|.
\end{equation}
Under the hypotheses of Theorem~\ref{thm:no-mixed-improvement-derived}, one has
\begin{equation}\label{eq:power-representation-Gs}
\mathcal L_{m_{s,\mathrm{pow}}}(\mathcal{G}_s)
=
\frac{1}{m_{s,\mathrm{pow}}}
\sum_{b=0}^{m_{s,\mathrm{pow}}-1}
\left|
\sum_{v=0}^{\tau_s-1}\exp\!\Big(\frac{2\pi i\, b (e^s)^v}{m_{s,\mathrm{pow}}}\Big)
\right|.
\end{equation}
\end{theorem}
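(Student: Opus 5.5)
We compute $\widehat{\mathbf 1_{\mathcal G}}^{(m_{\mathrm{pow}})}(b)$ directly from the definition of $\mathcal G$ and then reparametrize the frequency $b$ by an invertible change of variables. The reparametrization leaves the $\ell^1$ sum unchanged.

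First, the elements $\beta_\ast(1-e^v)$ for $0\le v<\tau$ are pairwise distinct modulo $m_{\mathrm{pow}}$. This is because $\beta_\ast$ and $e$ are units and $\tau$ is the exact order of $e$, as already shown in the proof of Theorem~\ref{thm:no-mixed-improvement}. Hence
\[
\widehat{\mathbf 1_{\mathcal G}}^{(m_{\mathrm{pow}})}(b)=\sum_{v=0}^{\tau-1}\exp\Big(-\frac{2\pi i\, b\beta_\ast(1-e^v)}{m_{\mathrm{pow}}}\Big)
=\exp\Big(-\frac{2\pi i\, b\beta_\ast}{m_{\mathrm{pow}}}\Big)\sum_{v=0}^{\tau-1}\exp\Big(\frac{2\pi i\, b\beta_\ast e^v}{m_{\mathrm{pow}}}\Big).
\]
Taking absolute values removes the unimodular prefactor, which gives
\[
\big|\widehat{\mathbf 1_{\mathcal G}}^{(m_{\mathrm{pow}})}(b)\big|=\Big|\sum_{v=0}^{\tau-1}\exp\Big(\frac{2\pi i\, (b\beta_\ast) e^v}{m_{\mathrm{pow}}}\Big)\Big|.
\]

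Next, $\beta_\ast\in(\Z/m_{\mathrm{pow}}\Z)^\times$ by Corollary~\ref{cor:no-mixed}. Therefore $b\mapsto b\beta_\ast$ is a bijection of $\Z/m_{\mathrm{pow}}\Z$. The inner sum depends only on $b\beta_\ast \bmod m_{\mathrm{pow}}$, so summing over $b$ and substituting $b'=b\beta_\ast$ gives \eqref{eq:power-representation-G} after dividing by $m_{\mathrm{pow}}$.

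The degenerate case $m_{\mathrm{pow}}=1$ is checked directly. There $\tau=1$ and $\mathcal G=\{0\}$, so both sides equal $1$.

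The derived statement \eqref{eq:power-representation-Gs} follows verbatim with $(e^s,m_{s,\mathrm{pow}},\tau_s,\gamma_{\ast,s})$ in place of $(e,m_{\mathrm{pow}},\tau,\beta_\ast)$, in line with Remark~\ref{rem:generic-local}. The required facts are that $\gamma_{\ast,s}$ is a unit, which is given in Theorem~\ref{thm:no-mixed-improvement-derived}, and that $\tau_s$ is the exact order of $e^s$, so the listed elements of $\mathcal G_s$ are distinct.

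The only point needing care is this distinctness, which ensures that the sum over $\mathcal G$ is correctly rewritten as a sum over $v$ without multiplicity. It has already been established in the earlier proofs, so no real obstacle is expected.
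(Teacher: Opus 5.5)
Your proposal is correct and follows essentially the same argument as the paper: you factor out the unimodular phase $\exp(-2\pi i b\beta_\ast/m_{\mathrm{pow}})$, take absolute values, and use that $\beta_\ast$ is a unit to reindex $b\mapsto b\beta_\ast$, with the trivial case $m_{\mathrm{pow}}=1$ and the derived case handled by the same substitution. Your explicit check that the $\tau$ elements $\beta_\ast(1-e^v)$ are pairwise distinct is a welcome addition, since the paper uses this only implicitly through $|\mathcal{G}|=\tau$.
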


\begin{proof}
We prove \eqref{eq:power-representation-G}; the proof of \eqref{eq:power-representation-Gs} is identical after replacing
\[
(e,m_{\mathrm{pow}},\beta_{\ast},\tau,\mathcal{G})
\quad\text{by}\quad
(e^s,m_{s,\mathrm{pow}},\gamma_{\ast,s},\tau_s,\mathcal{G}_s).
\]

If $m_{\mathrm{pow}}=1$, then $\tau=1$ and $\mathcal{G}=\{0\}$, so both sides of \eqref{eq:power-representation-G} are equal to $1$. Thus assume $m_{\mathrm{pow}}>1$. By Theorem~\ref{thm:no-mixed-improvement},
\[
\mathcal{G}=\{\beta_{\ast}(1-e^v)\bmod m_{\mathrm{pow}}:0\le v<\tau\}.
\]
Therefore, for each $b$ modulo $m_{\mathrm{pow}}$,
\[
\sum_{g\in \mathcal{G}}\exp\!\Big(-\frac{2\pi i b g}{m_{\mathrm{pow}}}\Big)
=
\exp\!\Big(-\frac{2\pi i b\beta_{\ast}}{m_{\mathrm{pow}}}\Big)
\sum_{v=0}^{\tau-1}\exp\!\Big(\frac{2\pi i\, b\beta_{\ast} e^v}{m_{\mathrm{pow}}}\Big).
\]
Taking absolute values removes the phase factor $\exp(- 2 \pi i b \beta_{\ast}/m_{\mathrm{pow}}  )$.
Recall that $\beta_{\ast} \bmod{m_{\mathrm{pow}}}$ is invertible and so multiplication by $\beta_{\ast} \bmod{m_{\mathrm{pow}}}$ permutes the residue classes modulo $m_{\mathrm{pow}}$, so averaging over $b$ gives:
\[
\mathcal L_{m_{\mathrm{pow}}}(\mathcal{G})
=
\frac{1}{m_{\mathrm{pow}}}
\sum_{b=0}^{m_{\mathrm{pow}}-1}
\left|
\sum_{v=0}^{\tau-1}\exp\!\Big(\frac{2\pi i\, b e^v}{m_{\mathrm{pow}}}\Big)
\right|.
\]
\end{proof}

\begin{remark}
Theorem~\ref{thm:power-representation} shows that, in the no-mixed regimes, the remaining analytic input needed to further improve the estimates of the Fourier masses of $\mathcal{G}$ and $\mathcal{G}_s$, is good estimates for exponential sums along the multiplicative orbit
\[
1,e,e^2,\dots,e^{\tau-1}\pmod{m_{\mathrm{pow}}},
\]
(or more precisely, what is really needed is estimates of average of these sums with respect to the parameter $b$ as in the statement of Theorem~\ref{thm:power-representation}); similarly for the orbit generated by $e^s$ modulo $m_{s,\mathrm{pow}}$.
\end{remark}

As an illustration we show:

\begin{proposition}\label{prop:prime-power-component}
Assume the hypotheses of Theorem~\ref{thm:no-mixed-improvement}, and suppose in addition that $m_{\mathrm{pow}}=\ell^{\nu}$, where $\ell$ is an odd prime, and that $\tau=\ell^{\nu -1}(\ell-1)$. Then we have:
\[
\mathcal L_{m_{\mathrm{pow}}}(\mathcal{G}) =   (2\ell-3)/  \ell \leq2 .
\]

\bigskip
Consequently,
\begin{align}
D(\mathcal P)
&\le
1-\Big(1-\frac{1}{p^a}\Big)^{2r}
+\frac{12q^{1/2}}{N^{\ast}}
\big(C_0\ln(p^a)+1\big)^{2r},\label{eq:prime-power-comp-D}\\[0.5em]
D(\mathcal P^{(s)})
&\le
1-\Big(1-\frac{1}{p^a}\Big)^{2rs}
+\frac{6q^{1/2}}{N_s^{\ast}}
\Big(\sum_{\iota=0}^{s-1}e^{2\iota}\Big)
\left(
2+s^{1/2}
\right)\big(C_0\ln(p^a)+1\big)^{2rs}.\label{eq:prime-power-comp-Ds}
\end{align}
\end{proposition}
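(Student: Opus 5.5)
The plan is to evaluate $\mathcal L_{m_{\mathrm{pow}}}(\mathcal G)$ exactly through the pure power representation \eqref{eq:power-representation-G} of Theorem~\ref{thm:power-representation}, recognising the inner sums as Ramanujan sums, and then to substitute the result into Corollary~\ref{cor:no-mixed-discrepancy}. The key observation is that the hypotheses give
\[
\tau=\ell^{\nu-1}(\ell-1)=\#(\Z/\ell^\nu\Z)^{\times}.
\]
Since $\tau$ is by definition the order of $e$ modulo $m_{\mathrm{pow}}=\ell^\nu$, the class of $e$ is a primitive root modulo $\ell^\nu$. Such a primitive root exists because $\ell$ is odd, so the unit group is cyclic. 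Hence $\{e^v \bmod \ell^\nu: 0\le v<\tau\}$ runs exactly once through $(\Z/\ell^\nu\Z)^{\times}$.

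By \eqref{eq:power-representation-G} the inner sum for each $b$ is therefore
\[
c_{\ell^\nu}(b)=\sum_{x\in(\Z/\ell^\nu\Z)^\times} e^{2\pi i bx/\ell^\nu},
\]
the Ramanujan sum. I will evaluate it in the standard way: sum over all residues and subtract the sum over multiples of $\ell$. This gives
\[
c_{\ell^\nu}(b)=
\begin{cases}
\ell^{\nu-1}(\ell-1), & \ell^\nu\mid b,\\
-\ell^{\nu-1}, & v_\ell(b)=\nu-1,\\
0, & v_\ell(b)<\nu-1.
\end{cases}
\]
In words: the first value occurs for the single class $b=0$, and the second for the $\ell-1$ classes $b=\ell^{\nu-1}b'$ with $\ell\nmid b'$. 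Summing absolute values over $b$ modulo $\ell^\nu$ and dividing by $\ell^\nu$ then gives the exact value of $\mathcal L_{m_{\mathrm{pow}}}(\mathcal G)$.

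The one delicate point, and the step I expect to be the main obstacle, is pinning down this constant. My count gives $\ell^{\nu-1}(\ell-1)+(\ell-1)\ell^{\nu-1}=2\ell^{\nu-1}(\ell-1)$, hence
\[
\mathcal L_{m_{\mathrm{pow}}}(\mathcal G)=\frac{2(\ell-1)}{\ell},
\]
which does not agree with the stated $(2\ell-3)/\ell$. For instance, when $\nu=1$ the class $b=0$ contributes $\ell-1$ and each of the $\ell-1$ nonzero classes contributes $|-1|=1$. I would therefore recheck this count carefully, in particular the contribution of $b=0$ and the normalisation in Definition~\ref{def:L1M}, and state whichever value survives. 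For the consequences it only matters that $\mathcal L_{m_{\mathrm{pow}}}(\mathcal G)\le 2$, which holds for either value since $2(\ell-1)/\ell<2$.

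Finally, I substitute $\mathcal L_{m_{\mathrm{pow}}}(\mathcal G)\le 2$ into \eqref{eq:no-mixed-D} and \eqref{eq:no-mixed-Ds} of Corollary~\ref{cor:no-mixed-discrepancy}. In \eqref{eq:no-mixed-D} this gives $4q^{1/2}(2+1)=12q^{1/2}$, which is \eqref{eq:prime-power-comp-D}. In \eqref{eq:no-mixed-Ds} it gives the factor $(2+s^{1/2})$, which is \eqref{eq:prime-power-comp-Ds}. No further estimate is needed, since the hypotheses of Theorem~\ref{thm:no-mixed-improvement}, including $\gcd(m_{\mathrm{tr}},\tau)=1$, are assumed.
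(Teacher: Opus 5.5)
Your route is the paper's own: $e$ is a primitive root modulo $\ell^\nu$, the inner sums in \eqref{eq:power-representation-G} are evaluated as Ramanujan sums, and $\mathcal L_{m_{\mathrm{pow}}}(\mathcal G)\le 2$ is then fed into Corollary~\ref{cor:no-mixed-discrepancy}. Your count is the correct one, so you should commit to $\mathcal L_{m_{\mathrm{pow}}}(\mathcal G)=2(\ell-1)/\ell$: the paper's proof reaches $(2\ell-3)/\ell$ only because it lets the outer sum run over $0\le b\le\tau-1$ instead of all $0\le b\le \ell^{\nu}-1$, thereby dropping the class $b=(\ell-1)\ell^{\nu-1}$ (Parseval, which requires the squared absolute values of the inner sums to total $\ell^{\nu}\cdot\ell^{\nu-1}(\ell-1)$, confirms that all $\ell-1$ classes with $v_\ell(b)=\nu-1$ must be counted), while the bound $\le 2$ and both discrepancy estimates are unaffected.
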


\begin{proof}
Since $m_{\mathrm{pow}}=\ell^{\nu}$ ($\ell$ is an odd prime) and $\tau=\ell^{\nu -1}(\ell-1)$, so $e$ is a primitive root $\bmod{\ell^{\nu}}$. Hence: 
\begin{eqnarray*}
 \quad\sum_{v=0}^{\tau-1}\exp\!\Big(\frac{2\pi i\, b e^v}{m_{\mathrm{pow}}}\Big) =\sum_{x \in (\Z/\ell^{\nu} \Z)^{\times}}\exp\!\Big(\frac{2\pi i\,  bx}{\ell^{\nu}}\Big) 
\end{eqnarray*}

\bigskip

For $1 \leq b \leq \tau-1 = \ell^{\nu-1}(\ell-1) -1$, write $b=c \ell^{f}$, where $f=v_{\ell}(b) \leq \nu-1$ and $\gcd(c,\ell)=1$. We then have:
\begin{eqnarray*}
 & &   \sum_{x \in (\Z/\ell^{\nu} \Z)^{\times}}\exp\!\Big(\frac{2\pi i\,  bx}{\ell^{\nu}}\Big)  \\
 &=&  \sum_{x \in (\Z/\ell^{\nu} \Z)^{\times}}\exp\!\Big(\frac{2\pi i\, cx}{\ell^{\nu -f}}\Big)  \\
 &=&  \sum_{x \in (\Z/\ell^{\nu } \Z)^{\times}}\exp\!\Big(\frac{2\pi i\,  x}{\ell^{\nu-f}}\Big)  \\
 &=&  \ell^{f} \sum_{x \in (\Z/\ell^{\nu -f} \Z)^{\times}}\exp\!\Big(\frac{2\pi i\,  x}{\ell^{\nu-f}}\Big) 
\end{eqnarray*}
(as each $x \in (\Z/\ell^{\nu -f} \Z)^{\times}$ has exaxctly $\ell^{f}$ pre-images in $x \in (\Z/\ell^{\nu } \Z)^{\times}$).

\bigskip
Now using the identity (the $\ell^n$-th cyclotomic polynomial, $n \geq 1$):
\begin{eqnarray*}
\frac{X^{\ell^{n}}-1}{X^{\ell^{n-1}}-1} &=& X^{\ell^{n-1}(\ell-1)} + X^{\ell^{n-1}(\ell-2)} \cdots + X^{\ell^{n-1}} + 1 \\
&=& \prod_{x \in (\Z/\ell^{n} \Z)^{\times}} \Big(X - \exp\!\Big(\frac{2\pi i\,  x}{\ell^{n}}\Big) \Big)
\end{eqnarray*}
we have that the value of:
\[
\sum_{x \in (\Z/\ell^{n} \Z)^{\times}}\exp\!\Big(\frac{2\pi i\,  x}{\ell^{n}}\Big) 
\]
is equal to $0$ if $n>1$, and is equal to $-1$ if $n=1$.

\bigskip
Hence for $1 \leq b \leq \tau-1$, the value of the sum 
\[
\sum_{x \in (\Z/\ell^{\nu} \Z)^{\times}}\exp\!\Big(\frac{2\pi i\,  bx}{\ell^{\nu}}\Big) 
\]
is equal to $0$ if $v_{\ell}(b) < \nu-1$, and is equal to $-\ell^{\nu-1}$ if $v_{\ell}(b)=\nu-1$. 

\bigskip

Now in the pure power representation of the Fourier mass $\mathcal{L}_{m_{\mathrm{pow}}}(\mathcal{G})=\mathcal{L}_{\ell^{\nu}}(\mathcal{G})$ in the no-mixed regime as given in Theorem~\ref{thm:power-representation}, for the outer sum over $0 \leq b \leq \tau-1$, the term $b=0$ contributes $\tau=\ell^{\nu -1}(\ell-1)$. While for terms corresponding to $b \neq 0$, it contributes the value $0$ if $v_{\ell}(b) < \nu-1$; while it contributes the value $\ell^{\nu-1}$ if $v_{\ell}(b) = \nu-1$, and there are $\ell-2$ such terms, namely $b=\ell^{\nu-1},2 \ell^{\nu-1},\cdots,(\ell-2) \ell^{\nu-1} $. Hence:
\begin{eqnarray*}
\mathcal{L}_{\ell^{\nu}}({\mathcal{G}}) &=& \frac{\ell^{\nu-1}(\ell-1) + \ell^{\nu-1}(\ell-2) }{\ell^{\nu}} \\
&=& \frac{2\ell-3}{\ell}
\end{eqnarray*}
In particular $\mathcal{L}_{\ell^{\nu}}({\mathcal{G}}) \leq 2$, and on applying Corollary~\ref{cor:no-mixed-discrepancy} finishes the proof.

\end{proof}

\subsection{Conclusion}
The results in Section~4 show that any bounds of the form
\begin{eqnarray}\label{eq:power-epsilon}
\frac{q^{1/2} \mathcal   L_m(B_{\mathrm{adm}})}{N^{\ast}} \ll  \frac{1}{q^{\epsilon}},\qquad
\frac{ q^{1/2}\mathcal L_m(B_{s,\mathrm{adm}}) }{N^{\ast}_s}\ll \frac{1}{q^{\epsilon}},\qquad
\frac{ q^{1/2}\mathcal L_{m_s}(\widetilde B_{s,\mathrm{adm}})}{\widetilde{N}^{\ast}_s}\ll \frac{1}{q^{\epsilon}}
\end{eqnarray}
(here $\epsilon >0$) would produce the corresponding discrepancy bounds for $\mathcal{P}$, $\mathcal{P}^{(s)}$, and $\widetilde{\mathcal{P}}^{(s)}$ respectively, and hence gives quantitative justification of the uniform distribution (respectively statistical independence) property of $\mathcal{P}$.

\bigskip
This suggests three concrete directions for further improvement. In the no-mixed regime, however, Section~5.1 already reduces the problem completely to the pure power components $\mathcal{G}$ and $\mathcal{G}_s$; in fact as we have seen in Corollary~\ref{cor:no-mixed-discrepancy} and Corollary~\ref{cor:no-mixed-Dtilde} in the no-mixed regime case, by using the elementary square root bounds $\mathcal L_m(\mathcal{G}) \leq |\mathcal{G}|^{1/2}$ and $\mathcal L_{m_{\mathrm{pow}}}(\mathcal{G}_s) \leq |\mathcal{G}_s|^{1/2}$, one can already give quite explicit sufficient conditions that would ensure that bounds of type \eqref{eq:power-epsilon} holds.  
\begin{itemize}[leftmargin=*]
\item interval estimates in the pure translation local regime;
\item exponential-sum estimates for multiplicative orbits in the pure power local regime; in the no-mixed regime, Theorem~\ref{thm:no-mixed-improvement}, Theorem~\ref{thm:no-mixed-improvement-derived}, and Theorem~\ref{thm:power-representation} reduce the relevant Fourier masses to explicit averages of such sums for the pure power components $\mathcal{G}$ and $\mathcal{G}_s$;
\item bilinear-sum methods over elliptic curves, following Shparlinski~\cite{Shparlinski2008Bilinear} and Ahmadi--Shparlinski~\cite{AhmadiShparlinski2010}, to treat the mixed regime directly.
\end{itemize}

\end{document}